\newtheorem{theorem}{Theorem}
\newtheorem{lemma}{Lemma}
\newtheorem{definition}{Definition}
\newtheorem{corollary}{Corollary}
\newtheorem{proposition}{Proposition}
\newtheorem{remark}{Remark}
 \newtheorem{notation}[theorem]{Notation}
 \def\la{{\langle}}
 \def\ra{{\rangle}}
\newcommand{\mR}{\mathbb{R}}
\newcommand{\mC}{\mathbb{C}}
\newcommand{\mN}{\mathbb{N}}
\newcommand{\mS}{\mathbb{S}}
\newcommand{\mH}{\mathbb{H}}
\newcommand{\cM}{\mathcal{M}}
\newcommand{\cF}{\mathcal{F}}
\newcommand{\cP}{\mathcal{P}}
\newcommand{\cC}{\mathcal{C}}
\newcommand{\cS}{\mathcal{S}}
\newcommand{\cI}{\mathcal{I}}
\newcommand{\ux}{\underline{x}}
\newcommand{\uy}{\underline{y}}
\newcommand{\uu}{\underline{u}}
\newcommand{\ut}{\underline{t}}
\newcommand{\upx}{\partial_{\underline{x}}}
 \def\a{{\alpha}} 
 \def\b{{\beta}}
 \def\l{{\lambda}}
 \def\la{{\langle}}
 \def\ra{{\rangle}}
\begin{document}

\title{Convolution products for hypercomplex Fourier transforms}

\author{Roxana Bujack\footnote{bujack@informatik.uni-leipzig.de, Institut f\"ur Informatik, Universit\"at Leipzig, Johannisgasse 26, 04103 Leipzig, Germany.}, Hendrik De Bie\footnote{Hendrik.DeBie@UGent.be, Department of Mathematical Analysis, Faculty of Engineering and Architecture, 
Ghent University,
Galglaan 2, 9000 Ghent, Belgium.}, Nele De Schepper\footnote{Nele.DeSchepper@UGent.be, Department of Mathematical Analysis, Faculty of Engineering and Architecture, 
Ghent University,
Galglaan 2, 9000 Ghent, Belgium.}, Gerik Scheuermann\footnote{scheuermann@informatik.uni-leipzig.de, Institut f\"ur Informatik, Universit\"at Leipzig, Augustuplatz 10, 04109 Leipzig, Germany.}}

\date{}

\maketitle

\begin{abstract}
Hypercomplex Fourier transforms are increasingly used in signal processing for the analysis of higher-dimensional signals such as color images. A main stumbling block for further applications, in particular concerning filter design in the Fourier domain, is the lack of a proper convolution theorem. The present paper develops and studies two conceptually new ways to define convolution products for such transforms. As a by-product, convolution theorems are obtained that will enable the development and fast implementation of new filters for quaternionic signals and systems, as well as for their higher dimensional counterparts.
\end{abstract}

\noindent
\textbf{MSC 2010 :} 30G35; 42B10; 44A35; 94A12\\
\noindent
\textbf{Keywords :} Hypercomplex analysis; generalized Fourier transform; Clifford-Fourier transform; geometric Fourier transform; quaternionic Fourier transform; convolution product; color image processing

\section{Introduction}
\label{sec:1}

Recently, there has been an increased interest in applying hypercomplex Fourier transforms (FTs) in various aspects of signal processing where higher dimensional or vector signals are used, such as color image processing \cite{ES,MST}, flow visualization \cite{Ebl1,Ebl2}, and even spoken word recognition \cite{BTN}. The main idea behind these applications is the representation of a signal (say, a color image) as a pure quaternion or as an element of a suitable Clifford algebra (see Section \ref{prelim} for a precise definition). This representation is subsequently  analysed using a generalisation of the classical Fourier transform to a hypercomplex FT, which takes into account the multi-dimensional and multi-component nature of the signal under consideration. This stands in stark contrast to a component based classical analysis, sometimes also called marginal analysis. Successful further developments of the hypercomplex approach include the design of a color edge filter \cite{S_CE}, as well as other filters \cite{SE2}, construction of FFT methods to compute hypercomplex FTs \cite{PDC}, etc.

The main issue that hinders further development of applications (and in particular of filter design not based on ad hoc assumptions or ideas) is the lack of a suitable convolution theorem.  Indeed, in \cite{BU5} it was shown that hypercomplex FTs such as the quaternionic FT do not interact nicely with the classical convolution product, but rather lead to very complicated expressions in the Fourier domain. This means that, up to now, no filter design was possible in the Fourier domain, and hence that no fast implementations as multiplication operators have been obtained so far.

The main aim of the present paper is to tackle that problem. We will investigate, on theoretical grounds, the different possible convolution products that can be defined for a wide class of hypercomplex FTs. It turns out that two conceptual ways exist to achieve this, both having a left and right version. Before explaining these two definitions, let us recall that in the recent literature three different approaches to hypercomplex FTs have been considered. We can identify them as follows
\begin{itemize}
\item \textbf{A:} Eigenfunction approach 
\item \textbf{B:} Generalized roots of $-1$ approach
\item \textbf{C:} Characters of Spin group approach
\end{itemize}

The first approach is mainly studied in \cite{MR2190678,MR2283868,AIEP,DBNS,DBDSFr,DBDSC,H12,DBOSS,DBS,DBXu}, and aims at constructing new hypercomplex transforms by prescribing eigenvalues to a suitable basis of a Clifford-algebra valued $L_{2}$ space. The choice of suitable eigenvalues implies that there is a huge design freedom in this approach. The transforms of this class also have a deep connection with quantum mechanics and exhibit a very particular underlying algebraic structure, namely that of the Lie superalgebra $\mathfrak{osp}(1|2)$. For a recent review from this point of view, we refer the reader to \cite{DBR}.

The second approach is mainly advocated in \cite{BU,BU5} and
boils down to replacing the imaginary unit $i$ in the exponent of the ordinary Fourier transform by a generalized root of $-1$, belonging to a Clifford algebra (see e.g. \cite{HA,HHA} for a detailed study of such roots). It encompasses several of the hypercomplex FTs often used in applications, such as the quaternionic Fourier transform \cite{ES}, the Sommen-B\"ulow transform \cite{MR1875365,MR650399}, the Clifford Fourier transform (written without hyphen) introduced in \cite{Ebl2} and further extended in \cite{Maw1,Maw3}, and the cylindrical Fourier transform \cite{AIEP}. Again this approach exhibits a huge design freedom, as the set of roots of $-1$ is very big and as the roots can in principle be chosen independently for each application.

Finally, a third approach is given in \cite{B,B2}, and uses the notion of character (or group morphism) to generalize the ordinary Fourier transform to the setting of the group $Spin(3)$, resp. $Spin(5)$ for direct application in grey scale, resp. color image processing. Although the conceptual ideas of this third approach are very different from the second approach \textbf{B}, the resulting transforms can nevertheless be written as special cases of the transforms in \textbf{B}. For that reason we will only focus on the first two approaches in the sequel.

The classical FT and the classical convolution product will serve as a guide to define the generalized convolution products. Recall that the classical convolution product for two functions $f$ and $g$ is defined by
\[
f * g (x) : = \int_{\mR^m} f(y)\tau_y g(x) dy, 
\]
with $\tau_y g(x) := g(x-y)$. The classical FT, defined over $\mR^m$, is given by
\[
\cF(f)(y) := (2 \pi)^{-\frac{m}{2}} \int_{\mR^m} e^{-i \la x, y \ra} f(x) dx
\]
for $f \in L_1(\mR^m)$, where $ \la x,y\ra = \sum_{j=1}^m x_j y_j$ is the standard inner product. It interacts nicely with the convolution product. Namely, one has
\begin{equation}
\label{ordConvThm}
\cF(f * g ) = (2 \pi)^{m/2}  \cF(f) \cF(g).
\end{equation}
A first possibility to generalize the convolution product is hence obtained by taking the inverse FT of the right-hand side of (\ref{ordConvThm}) as a definition. This idea was first explored by Mustard for the fractional Fourier transform, see \cite{MR1668139}.

Another way to generalize the convolution product is obtained by introducing a generalization of the (geometric) translation operator $\tau_y$. This is also the strategy which is e.g. used for the Dunkl transform (see \cite{MR1973996,MR2274972}). Let us first illustrate the concept in some more detail for the ordinary FT. First we compute the FT of the translation over $z$ of a function $f$:
\begin{eqnarray*}
\cF (\tau_z f)(y) & = & (2 \pi)^{-\frac{m}{2}} \int_{\mR^m} e^{-i \la x, y \ra} f(x-z) dx\\
&=& e^{-i \la z, y \ra}  (2 \pi)^{-\frac{m}{2}} \int_{\mR^m} e^{-i \la x, y \ra} f(x) dx\\
&=& e^{-i \la z, y \ra} \cF(f)(y).
\end{eqnarray*}
This means that, formally, the ordinary translation is recovered via
\begin{equation}
\label{translationFT}
\tau_z f(u)  = \cF^{-1} \left( e^{-i \la z, y \ra} \cF(f)(y) \right).
\end{equation}
Here, the inverse FT acts on the $y$ variable and yields the $u$ variable.

Now let $K(x,y)$ be the integral kernel of a hypercomplex FT $\cF_K$ and $\widetilde{K(x,y)}$ be the kernel of its inverse $\cF_K^{-1}$. Upon replacing $e^{-i \la z, y \ra}$ by the kernel of the hypercomplex FT under consideration in formula (\ref{translationFT}), one obtains the definition of a generalized translation operator:
\begin{equation}
\label{gentrans}
\tau_y^K f(x) = \int_{\mR^m} \widetilde{K(\xi,x)} K(y,\xi) \cF_K(f)(\xi) d\xi.
\end{equation}
Note that this definition immediately reduces to geometric translation (i.e. formula (\ref{translationFT})) when $K$ is the kernel of the classical FT. 

We can hence summarize the four possible definitions for a generalized convolution product, related to a hypercomplex FT $\cF_K$:
\begin{itemize}

\item The first type of definition is inspired by convolution for the fractional Fourier transform. It is given by
\[
 f *_1 g(x):= \cF_K^{-1}\left( \cF_K(f) \cF_K(g)\right)(x)
\]
or
\[
 f *_2 g(x):= \cF_K^{-1}\left( \cF_K(g) \cF_K(f)\right)(x).
\]
These two versions in general do not coincide due to the non-commutativity of the Clifford algebra. However, one immediately observes that $ f *_1 g =  g *_2 f$. For symmetry reasons it is nevertheless convenient to introduce both versions.

This definition is clearly interesting as it forces a convolution theorem to hold for the hypercomplex FT under consideration. As we will see in the sequel, it is possible to derive explicit formulas for these new convolutions in terms of classical convolutions (see e.g. the subsequent Theorem \ref{mustardconvGFT}).

\item Using the generalized translation operator $\tau^{K}$ on the other hand, one can also define
\[
 f *_3 g(x):=   \int_{\mR^m} [\tau_y^K f(x)]  g(y)dy
\]
or
\[
 f *_4 g(x):= \int_{\mR^m} f(y) [\tau_y^K g(x) ]dy.
\]
Both definitions are again different due to the non-commutativity of the Clifford algebra. Moreover, in this case there is no immediate relation expressing $ f *_3 g$ in terms of $ f *_4 g$ or vice versa.  

This definition is interesting for a different reason: it allows to obtain inversion theorems for hypercomplex FTs in a way similar to Theorem 8.4 in \cite{DBXu}. 
\end{itemize}

For hypercomplex FTs, so far only definition $f *_3 g$ has been applied, in only one particular case, see \cite{DBXu}. In the present paper, we will perform a detailed study of the four different products mentioned above, for transforms belonging to approach \textbf{A} and \textbf{B}. In both cases, we will introduce specific notations to distinguish between the various definitions. An overview of these notations and the results obtained in the paper is given in Table \ref{tableconv}. Note that the methods developed in the paper can equally be extended to other hypercomplex FTs that have not been designed yet.

The paper is organized as follows.  After some preliminaries on Clifford algebras and analysis, we introduce in Section \ref{secCA} the most general hypercomplex FT that can be considered within the approach \textbf{A}. Its integral kernel takes the form of an infinite series in terms of Gegenbauer polynomials and Bessel functions. We discuss several examples and calculate the action of the associated transform on a basis of the relevant function space. We also construct the inverse of this general transform. In Section \ref{secConv} we define the four types of convolution for this general transform, and study their basic properties. Next, in Section \ref{secTrans}, we compute the action of the generalized translation operator on the special class of radial functions and prove that, for this special set of functions, the new translation coincides in many cases with ordinary translation. The proof of this result is technical, and the reader may wish to skip it during a first reading of the paper. In Section \ref{secB}, we turn our attention to the transforms belonging to approach \textbf{B}. We give the general definition and compute the eigenvalues and eigenfunctions (thus establishing a connection with the techniques used in approach \textbf{A}). Then we revisit the different convolution definitions in this context. Both in the case of Mustard convolution (Section \ref{GFTMustard}) and in the generalized translation operator approach (Section \ref{GFTtrans}), we can now obtain very explicit formulas for the new convolutions, which can moreover easily be implemented in software packages. In Section \ref{secQFT} we restate our results for the special case of the quaternionic Fourier transform (qFT). This is the most well-known hypercomplex FT used in engineering, and hence deserves a separate treatment. We end the paper with Table \ref{tableconv} where the results are summarized for the two approaches \textbf{A} and \textbf{B}.

\section{Preliminaries on Clifford algebras and analysis}
\label{prelim}

The Clifford algebra $\cC l_{0,m}$ over $\mR^{m}$ is the algebra generated by $e_{i}$, $i= 1, \ldots, m$, under the relations
\begin{align*}
&e_{i} e_{j} + e_{j} e_{i} = 0, \quad i \neq j\\
&e_{i}^{2} = -1.
\end{align*}
This algebra has dimension $2^{m}$ as a vector space over $\mR$. It can be decomposed as $\cC l_{0,m} = \oplus_{k=0}^{m} \cC l_{0,m}^{k}$
with $\cC l_{0,m}^{k}$ the space of $k$-vectors defined by
\[
\cC l_{0,m}^{k} := \mbox{span} \{ e_{i_{1}} \ldots e_{i_{k}}, i_{1} < \ldots < i_{k} \}.
\]
In the applied literature, Clifford algebras are usually called geometric algebras. For a detailed exposition from this point of view, including geometric interpretations and applications in computer vision, we refer the reader to \cite{Dorst}.

In the sequel, we will always consider functions $f$ taking values in $\cC l_{0,m}$, unless explicitly mentioned. Such functions can be decomposed as
\begin{displaymath} 
f = f_{0} + \sum_{i=1}^{m} e_{i}f_{i} + \sum_{i< j} e_{i} e_{j} f_{ij} + \ldots + e_{1} \ldots e_{m} f_{1 \ldots m} 
\end{displaymath}
with $f_{0}, f_{i}, f_{ij}, \ldots, f_{1 \ldots m}$ all real- or complex-valued functions on $\mathbb{R}^m$.

The Dirac operator is given by $\upx := \sum_{j=1}^{m} \partial_{x_{j}} e_{j}$ and the vector variable by $\ux := \sum_{j=1}^{m} x_{j} e_{j}$.
The square of the Dirac operator equals, up to a minus sign, the Laplace operator in $\mR^{m}$: $\upx^{2} = - \Delta$. For more information regarding Clifford analysis, we refer the reader to \cite{MR697564,MR1169463}.

Denote by $\cP$ the space of polynomials taking values in $\cC l_{0,m}$, i.e. 
$$
   \cP : = \mR[x_{1}, \ldots, x_{m}] \otimes \cC l_{0,m}.
$$ 
The space of homogeneous polynomials of degree $k$ is then denoted by $\cP_{k}$. 
The space $\cM_{k} : = \ker{\upx} \cap \cP_{k}$ is called the space of spherical monogenics 
of degree $k$.

Next we define the inner product and the wedge product of two vectors $\ux$ and $\uy$
\begin{align*}
\langle x, y \rangle &:= \sum_{j=1}^{m} x_{j} y_{j}\\
 \ux \wedge \uy &:= \sum_{j<k} e_{j}e_{k} (x_{j} y_{k} - x_{k}y_{j}).
\end{align*}

\begin{remark}
From now on, we will use the following convention for denoting variables, resp. vectors. When $x=(x_1, \ldots, x_m) \in \mR^m$ is used as a variable we will not underline it (in order not to overload notations). When it is used as a vector  $\ux := \sum_{j=1}^{m} x_{j} e_{j}$ involving Clifford multiplication, we use the underlined notation.
\end{remark}

We introduce two different bases for the space\\ $\cS(\mR^{m}) \otimes \cC l_{0,m}$, where 
$\cS(\mR^m)$ denotes the Schwartz space.  Define the functions $ \psi_{j,k,\ell}$ by
\begin{align}\label{basis}
\begin{split}
\psi_{2j,k,\ell} &:= L_{j}^{\frac{m}{2}+k-1}(|x|^{2}) M_{k}^{(\ell)} e^{-|x|^{2}/2},\\
\psi_{2j+1,k,\ell} &:= L_{j}^{\frac{m}{2}+k}(|x|^{2}) \ux M_{k}^{(\ell)} e^{-|x|^{2}/2},
\end{split}
\end{align}
where $j,k \in \mN$, $\{M_{k}^{(\ell)} \in \cM_{k}: \ell= 1, \ldots, \dim \cM_{k}\}$ is a basis for $\cM_{k}$,
and $L_{j}^{\alpha}$ are the Laguerre polynomials. The set $\{ \psi_{j,k,\ell}\}$ forms a basis of 
$\cS(\mR^{m}) \otimes \cC l_{0,m}$, see \cite{Spec}. This basis is called the Clifford-Hermite basis or the spherical basis. Alternatively, define the one-dimensional Hermite functions (see e.g. \cite{Sz}) by
\begin{eqnarray*}
\psi_k (x) &:= \left(x- \frac{d}{dx} \right)^k e^{-x^2/2}\\
&= H_k(x) e^{-x^2/2}
\end{eqnarray*}
for $k \in \mN$. Then the set $\{ \psi_{j_1,j_2, \ldots, j_m}\}$  with
\[
\psi_{j_1,j_2, \ldots, j_m} = \psi_{j_1}(x_1)\psi_{j_2}(x_2) \ldots \psi_{j_m} (x_m)
\]
and $j_1, \ldots, j_m \in \mN$ is also a basis of $\cS(\mR^{m}) \otimes \cC l_{0,m}$, called the tensor product or cartesian basis. Both bases interact nicely with the ordinary FT. One has
\begin{align}
\cF \left( \psi_{j,k,\ell}\right) & = (-i)^{j+k} \psi_{j,k,\ell}\\
\cF \left( \psi_{j_1,j_2, \ldots, j_m}\right) & = (-i)^{j_1 + \ldots + j_m} \psi_{j_1,j_2, \ldots, j_m}.
\label{eigCLFT}
\end{align}
It is interesting to compare this result with the subsequent Theorems \ref{eigenvalues} and \ref{eigenvalsGFT}.
For more information about these two bases, we refer the reader to \cite{CPLMS}.

\section{Convolution products in Approach A}
\label{secCA}

\subsection{Definition of the transforms, eigenfunctions, eigenvalues and inverse}
\label{gentran}

In this section we consider a general kernel of the following form
\begin{align}\label{genkernel}
\begin{split}
K(x,y) &= \left( A(w, \widetilde{z}) + (\ux \wedge \uy)  B(w, \widetilde{z}) \right) \\ 
& \quad \times e^{\frac{i}{2}( \cot \alpha) (|x|^2 + |y|^2)} 
\end{split}
\end{align}
with
\begin{align*}
A(w,\widetilde{z}) &= \sum_{k=0}^{+\infty} \alpha_{k} \ (\widetilde{z})^{-\l}J_{k+\l}(\widetilde{z}) C^{\l}_{k}(w)\\
B(w,\widetilde{z}) &=  \sum_{k=1}^{+\infty} \beta_{k} \ (\widetilde{z})^{-\l-1} J_{k+\l}(\widetilde{z}) C^{\l+1}_{k-1}(w)
\end{align*}
and $\alpha_{k}, \beta_{k} \in \mC$, $\widetilde{z} = (|x||y|)/ \sin{\alpha}$, $w=\langle x, y \rangle/(|x||y|)$, $\lambda=(m-2)/2$ and $\alpha \in \lbrack - \pi, \pi \rbrack$. Here, $J_{\nu}$ is the Bessel function and $C_k^{\l}$ the Gegenbauer polynomial. We exclude the cases where $\alpha=0$ or $\alpha = \pm \pi$.

The integral transform associated with this kernel is defined by
\begin{equation}\label{gentransform}
\cF_K\left( f \right) (y) = \rho_{\a,m} \int_{\mR^{m}} K(x,y) \ f(x) \ dx
\end{equation}
with 
\[
\rho_{\a,m}=(\pi(1-e^{-2i\alpha}))^{-m/2}
\]
and with $dx$ the standard Lebesgue measure on $\mathbb{R}^m$. The precise form of the kernel in formula (\ref{genkernel}) is inspired by the results obtained in \cite{DBDSFr}. In particular, it encompasses all previously studied kernels in the eigenfunction approach \textbf{A}.

\begin{remark}
In order to ensure good analytic behavior of the transform (\ref{gentransform}), we additionally demand that both $A(w,\widetilde{z})$ and $B(w,\widetilde{z})$ satisfy polynomial bounds of the following type
\begin{align*}
|A(w,\widetilde{z})| \leq c (1+|x|)^{j}(1+|y|)^{j}, \\
|B(w,\widetilde{z})| \leq c (1+|x|)^{j}(1+|y|)^{j}, 
\end{align*}
with $j \in \mN$ and  $c$ a constant.

This is the case for all subsequent examples, and allows us to prove that the transform (\ref{gentransform}) yields a continuous map on $\cS(\mR^{m}) \otimes \cC l_{0,m}$, by adaptation of the proof in \cite{DBDSFr}.
\end{remark}

\subsubsection{Some examples}

In the special case where $\alpha=\pi/2$, the kernel takes the form
\begin{equation}\label{specialcase}
K(x,y) = A(w, z) + (\ux \wedge \uy) \  B(w, z) 
\end{equation}
with
\begin{align*}
A(w,z) &= \sum_{k=0}^{+\infty} \alpha_{k} \ z^{-\l} J_{k+\l}(z) C^{\l}_{k}(w)\\
B(w,z) &=  \sum_{k=1}^{+\infty} \beta_{k} \ z^{-\l-1} J_{k+\l}(z) C^{\l+1}_{k-1}(w)
\end{align*} 
and $z = |x||y|$, $w=\langle x, y \rangle/(|x||y|)$, $\lambda=(m-2)/2$. The corresponding integral transform is given by
\begin{align*}
\cF_K\left( f \right) (y) &= \rho_{\frac{\pi}{2},m} \int_{\mR^{m}} K(x,y) \ f(x) \ dx
\end{align*} 
where $\rho_{\frac{\pi}{2},m} = (2 \pi)^{-m/2}$.

Note that the kernel of the classical Fourier transform $\cF$, which can equally be expressed as the operator exponential $e^{ \frac{i \pi m}{4}}e^{ \frac{i \pi}{4}(\Delta - |x|^{2} )}$, takes the form (\ref{specialcase}) with
\begin{align*}
&\alpha_k = 2^{\l} \Gamma(\l) (k+\l) (-i)^k\\
&\beta_k = 0.
\end{align*}
Also the Clifford-Fourier transform (see \cite{MR2190678,MR2283868,AIEP,DBXu}), a generalization of the classical Fourier transform in the framework of Clifford analysis, takes this form. It is defined by the following exponential operator 
\[
\cF_{\pm} := e^{ \frac{i \pi m}{4}} e^{ \frac{i \pi}{4}(\Delta - |x|^{2} \mp 2 \Gamma)},
\] 
with 
\[
\Gamma := - \sum_{j<k} e_{j}e_{k} (x_{j} \partial_{x_{k}} - x_{k}\partial_{x_{j}}).
\]
In case of the Clifford-Fourier transform $\cF_{-}$, the coefficients $\alpha_k$ and $\beta_k$ in the kernel (\ref{specialcase}) take the form:
\begin{align*}
\alpha_k &= 2^{\l - 1} \Gamma(\l + 1) (i^{2\l+2}+(-1)^k)\\
&\quad  - 2^{\l - 1} \Gamma(\l) \ (k+\l) (i^{2\l+2}-(-1)^k)\\
\beta_k &= - 2^{\l} \Gamma(\l+1) (i^{2\l+2}+(-1)^k).
\end{align*}
For the transform $\cF_{+}$, similar expressions hold. Moreover, in \cite{DBNS}, an entire class of kernels of the form (\ref{specialcase}), for particular values of the coefficients $\alpha_k$ and $\beta_k$, was determined. They yield new integral transforms that have the same calculus properties (i.e. interaction with the Dirac operator) as the original Clifford-Fourier transform, but with different spectrum.

Also for general $\alpha$, concrete examples have been studied. The fractional Fourier transform (see \cite{OZA}) is a generalization of the classical Fourier transform. It is usually defined using the operator expression
\begin{displaymath}
\cF_{\alpha} = e^{ \frac{i \alpha m}{2}}e^{ \frac{i \alpha}{2}(\Delta - |x|^{2} )}, \qquad \alpha \in \lbrack - \pi ,\pi \rbrack.
\end{displaymath}
Recently, a fractional version of the Clifford-Fourier transform was introduced (see \cite{DBDSFr}). It is defined by the following exponential operator
\begin{displaymath}
\cF_{\alpha, \beta} = e^{ \frac{i \alpha m}{2}} e^{i\beta \Gamma} e^{ \frac{i \alpha}{2}(\Delta - |x|^{2} )}, \qquad \alpha, \beta \in \lbrack - \pi ,\pi \rbrack.
\end{displaymath}
The integral kernel of this transform takes the form (\ref{genkernel}) with
\begin{align*}
\alpha_k &=  2^{\l -1} \Gamma(\l) (k+\l) i^{-k} ( e^{i \beta (k+2\l)} + e^{-i\beta k })\\
&\quad-2^{\l-1} \Gamma(\l+1) i^{-k} (e^{i\beta (k+2\l)} - e^{-i\beta k})\\
\beta_k &= \frac{2^{\l} \Gamma(\l+1)}{\sin{\alpha}} i^{-k} (e^{i \beta (k+2\l)} - e^{-i\beta k}).
\end{align*}

\subsubsection{Eigenvalues}

Now we calculate the action of the transform (\ref{gentransform}) on the basis (\ref{basis}) of $\cS(\mR^{m}) \otimes \cC l_{0,m}$. We start with the following auxiliary result, expressing the radial behavior of the integral transform.
\begin{proposition}
\label{radbehavior1}
Let $M_{k} \in \cM_{k}$ be a spherical monogenic of degree $k$. Let $f(x)= f_0(|x|)$ be a real-valued radial function in $\cS(\mR^m)$. Further, put $\underline{\xi}= \ux/|x|$ and $\underline{\eta} = \uy/|y|$. Then one has, putting $\beta_0 = 0$,
\begin{eqnarray*} 
&&\cF_K \left( f(r)M_{k} \right) (y)\\
 &=& c_m \left( \frac{\l}{\l+k} \alpha_{k} - \sin{\alpha} \ \frac{k}{2(k+ \l)} \beta_k \right) e^{\frac{i}{2} (\cot \alpha) |y|^2} M_{k}(\eta)\\
&&\times \int_{0}^{+\infty} r^{m+k-1}f_0(r)  \ (\widetilde{z})^{-\l} J_{k + \l}(\widetilde{z}) \ e^{\frac{i}{2} (\cot \alpha) r^2} dr
\end{eqnarray*}
and
\begin{eqnarray*}
&&\cF_K \left( f(r) \ux M_{k} \right) (y) \\
&=& c_m \left( \frac{\l}{\l+k+1} \alpha_{k+1} + \sin{\alpha} \frac{k+1+2\l}{2(k+1+ \l)} \beta_{k+1} \right) \\
&&\times e^{\frac{i}{2} (\cot \alpha) |y|^2} \underline{\eta} \  M_{k}(\eta)  \\
&&\times  \int_{0}^{+\infty} r^{m+k}f_0(r)   \ (\widetilde{z})^{-\l} J_{k +1+ \l}(\widetilde{z}) \ e^{\frac{i}{2} (\cot \alpha) r^2} dr
\end{eqnarray*}
with $\widetilde{z}= (r |y|)/\sin{\alpha}$, $\l = (m-2)/2$ and 
\[
c_m =  \frac{2}{\Gamma \left( \frac{m}{2} \right) \ (1-e^{-2i\alpha})^{m/2}}.
\]
\end{proposition}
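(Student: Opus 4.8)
The strategy is to substitute the explicit kernel (\ref{genkernel}) into the definition (\ref{gentransform}), pass to polar coordinates, and split the resulting integral into a radial factor times an angular factor over $S^{m-1}$; the angular factor is then evaluated by a Funk--Hecke argument for spherical monogenics. Writing $x = r\underline{\xi}$ with $r = |x|$, $\underline{\xi}\in S^{m-1}$ and $dx = r^{m-1}\,dr\,d\sigma(\xi)$, homogeneity gives $M_{k}(x) = r^{k}M_{k}(\xi)$ and $\ux M_{k}(x) = r^{k+1}\underline{\xi}M_{k}(\xi)$. Since $\widetilde{z}=r|y|/\sin\alpha$ and $w=\langle\xi,\eta\rangle$, the exponential $e^{\frac{i}{2}(\cot\alpha)|x|^{2}}$, the powers $(\widetilde{z})^{-\lambda}$, $(\widetilde{z})^{-\lambda-1}$ and the Bessel functions $J_{j+\lambda}(\widetilde{z})$ depend only on $r$ (and on the fixed $|y|$), whereas $C^{\lambda}_{j}(w)$, $C^{\lambda+1}_{j-1}(w)$ and the bivector $\underline{\xi}\wedge\underline{\eta}$ depend only on the directions. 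The Schwartz decay of $f_{0}$ together with the polynomial bounds on $A$ and $B$ (Remark after (\ref{gentransform})) justifies interchanging the series and the integrals, so $\cF_{K}(f(r)M_{k})(y)$ becomes a sum over $j$ of a radial integral in $r$ times one of the angular integrals
\[
I_{j}=\int_{S^{m-1}}C^{\lambda}_{j}(w)\,M_{k}(\xi)\,d\sigma(\xi),\qquad J_{j}=\int_{S^{m-1}}(\underline{\xi}\wedge\underline{\eta})\,C^{\lambda+1}_{j-1}(w)\,M_{k}(\xi)\,d\sigma(\xi),
\]
and analogously for $\cF_{K}(f(r)\ux M_{k})(y)$ with $\underline{\xi}M_{k}(\xi)$ in place of $M_{k}(\xi)$.

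For $I_{j}$ I would invoke the classical Funk--Hecke theorem componentwise: each component of $M_{k}|_{S^{m-1}}$ is a spherical harmonic of degree $k$, and --- needed for the second formula --- each component of $(\ux M_{k})|_{S^{m-1}}=\underline{\xi}M_{k}(\xi)$ is a spherical harmonic of degree $k+1$, because $\upx^{2}=-\Delta$ and $\upx(\ux M_{k})=-(m+2k)M_{k}$ force $\Delta(\ux M_{k})=0$. Funk--Hecke combined with the orthogonality of the $C^{\lambda}_{n}$ for the weight $(1-t^{2})^{\lambda-1/2}$ then gives $I_{j}=0$ unless $j=k$ (resp.\ $j=k+1$), while $I_{k}=\omega_{m-2}\,C^{\lambda}_{k}(1)^{-1}\|C^{\lambda}_{k}\|^{2}\,M_{k}(\eta)$; using the duplication formula for $\Gamma$ this equals $\frac{2\pi^{\lambda+1}}{(\lambda+k)\Gamma(\lambda)}M_{k}(\eta)$, and after multiplication by $\rho_{\alpha,m}$ it produces precisely the $\alpha_{k}$-term $c_{m}\frac{\lambda}{\lambda+k}\alpha_{k}$ of the claimed coefficient (resp.\ $c_{m}\frac{\lambda}{\lambda+k+1}\alpha_{k+1}$).

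The main obstacle is the bivector integral $J_{j}$. It cannot be treated componentwise, since $\underline{\eta}M_{k}(\xi)$ is not monogenic in $\xi$, and the monogenicity $\upx M_{k}=0$ has to be used to produce the cancellations that leave only one surviving index. My plan is to prove --- or to extract from the computations leading to the Bochner formula in \cite{DBDSFr} (see also \cite{DBXu,DBNS}) --- a Funk--Hecke theorem for spherical monogenics: $J_{j}$ vanishes unless $j=k$ and $J_{k}$ is an explicit complex scalar times $M_{k}(\eta)$, while in the companion case $\int_{S^{m-1}}(\underline{\xi}\wedge\underline{\eta})C^{\lambda+1}_{j-1}(w)\underline{\xi}M_{k}(\xi)\,d\sigma$ vanishes unless $j=k+1$ and is then a scalar times $\underline{\eta}M_{k}(\eta)$. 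This I would establish by writing $\underline{\xi}\wedge\underline{\eta}=\tfrac{1}{2}(\underline{\xi}\,\underline{\eta}-\underline{\eta}\,\underline{\xi})$, using that $\ux M_{k}\in\ux\,\cM_{k}$ is harmonic of degree $k+1$, re-expanding $C^{\lambda+1}$ in the $C^{\lambda}_{n}$ via the standard contiguous relations, and then applying scalar Funk--Hecke together with $\upx M_{k}=0$; it is the orthogonality with the weight $(1-t^{2})^{\lambda+1/2}$ that pins the index to $j-1=k-1$ (resp.\ $j-1=k$). Evaluating the resulting scalar at $C^{\lambda+1}_{k-1}$ delivers the $\beta_{k}$-term.

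It remains to reassemble everything. In the bivector contribution the extra factor $r|y|=(\sin\alpha)\,\widetilde{z}$ coming from $\ux\wedge\uy=r|y|(\underline{\xi}\wedge\underline{\eta})$ combines with $(\widetilde{z})^{-\lambda-1}$ to give $(\sin\alpha)(\widetilde{z})^{-\lambda}$ --- this is exactly the origin of the prefactor $\sin\alpha$ in front of $\beta_{k}$ --- so the radial integral of the bivector term coincides with that of the scalar term, namely $\int_{0}^{+\infty}r^{m+k-1}f_{0}(r)(\widetilde{z})^{-\lambda}J_{k+\lambda}(\widetilde{z})e^{\frac{i}{2}(\cot\alpha)r^{2}}\,dr$ (resp.\ with $r^{m+k}$ and $J_{k+1+\lambda}$ in the second formula). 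Collecting $\rho_{\alpha,m}$, $\omega_{m-2}$, the Gegenbauer norms and the values $C^{\lambda}_{k}(1)$, and using $m/2=\lambda+1$, one checks that all constants combine into $c_{m}\bigl(\frac{\lambda}{\lambda+k}\alpha_{k}-\sin\alpha\,\frac{k}{2(k+\lambda)}\beta_{k}\bigr)$, resp.\ $c_{m}\bigl(\frac{\lambda}{\lambda+k+1}\alpha_{k+1}+\sin\alpha\,\frac{k+1+2\lambda}{2(k+1+\lambda)}\beta_{k+1}\bigr)$. The convention $\beta_{0}=0$ merely covers the case $k=0$ in the first formula, where $B$ contributes nothing. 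The constant-chasing is routine bookkeeping; the genuine difficulty is the monogenic Funk--Hecke identity for $J_{j}$.
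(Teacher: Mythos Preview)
Your proposal is correct and coincides with the approach the paper invokes: the paper does not give a self-contained argument but defers to Theorem~6.4 in \cite{DBXu}, whose proof is precisely the polar-coordinate separation plus a Funk--Hecke argument for spherical monogenics that you outline. Your identification of the bivector integral $J_{j}$ as the nontrivial step, the reduction via $\ux\wedge\uy=r|y|(\underline{\xi}\wedge\underline{\eta})$ producing the $\sin\alpha$ factor, and the index-pinning $j=k$ (resp.\ $j=k+1$) are all exactly what happens in that reference.
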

\begin{proof}
The proof goes along similar lines as the proof of Theorem 6.4 in \cite{DBXu}.
 \end{proof}

We then have the following theorem.
\begin{theorem}
\label{eigenvalues}
One has, putting $\beta_{0}=0$,
\begin{align*}
&\cF_K ( \psi_{2j,k,\ell})\\  & = \frac{2^{-\l}}{\Gamma(\l+1)} \ \left(\frac{\l}{\l+k} \alpha_{k} - \sin{\alpha} \ \frac{k}{2(\l+k)} \beta_{k} \right)\\
&\quad \times i^k e^{-i \alpha (k+2j)} \ \psi_{2j,k,\ell}
\end{align*}
and
\begin{align*}
&\cF_K ( \psi_{2j+1,k,\ell}) \\ & = \frac{2^{-\l}}{\Gamma(\l+1)}  \left(\frac{\l}{\l+k+1} \alpha_{k+1} + \sin{\alpha}  \frac{k+1+2\l}{2(\l+k+1)} \beta_{k+1} \right)\\ 
&\quad  \times  i^{k+1} e^{-i\alpha(k+2j+1)} \ \psi_{2j+1,k,\ell}.
\end{align*}
\end{theorem}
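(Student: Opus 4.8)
The plan is to derive Theorem \ref{eigenvalues} directly from Proposition \ref{radbehavior1} by recognizing the basis functions $\psi_{2j,k,\ell}$ and $\psi_{2j+1,k,\ell}$ as radial functions times (possibly $\ux$ times) spherical monogenics, and then evaluating the one-dimensional radial integral that appears in the proposition. Concretely, $\psi_{2j,k,\ell} = L_j^{\l+k}(|x|^2)e^{-|x|^2/2} M_k^{(\ell)}$ is of the form $f(r)M_k$ with $f_0(r) = L_j^{\l+k}(r^2)e^{-r^2/2}$, and $\psi_{2j+1,k,\ell} = L_j^{\l+k+1}(|x|^2)e^{-|x|^2/2}\,\ux\,M_k^{(\ell)}$ is of the form $f(r)\,\ux\,M_k$ with $f_0(r) = L_j^{\l+k+1}(r^2)e^{-r^2/2}$. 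Applying Proposition \ref{radbehavior1} reduces everything to computing
\[
I := \int_0^{+\infty} r^{m+k-1} L_j^{\l+k}(r^2) e^{-r^2/2} (\widetilde z)^{-\l} J_{k+\l}(\widetilde z)\, e^{\frac{i}{2}(\cot\alpha) r^2}\, dr
\]
with $\widetilde z = (r|y|)/\sin\alpha$, and the analogous integral with $k \to k+1$ and $L_j^{\l+k}\to L_j^{\l+k+1}$ for the second family.

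The key step is thus the evaluation of this Hankel-type integral against a Laguerre polynomial weight. First I would substitute $t = r^2$ to turn $I$ into an integral of the form $\int_0^\infty t^{(m+k-2)/2} L_j^{\l+k}(t) e^{-t/2} (\cdot)^{-\l} J_{k+\l}(c\sqrt t) e^{\frac{i}{2}(\cot\alpha) t}\,dt$ up to constants; collecting the exponential factors gives $e^{-\frac{t}{2}(1 - i\cot\alpha)} = e^{-\frac{t}{2}\cdot\frac{\sin\alpha - i\cos\alpha}{\sin\alpha}} = e^{-\frac{t}{2}\cdot\frac{-ie^{i\alpha}}{\sin\alpha}}$, i.e. a complex Gaussian-type damping with parameter tuned to the rotation angle $\alpha$. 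Then I would invoke the classical Hille--Hardy / Weber--Schafheitlin type formula for the Hankel transform of a Laguerre function, namely that $\int_0^\infty t^{\mu/2} L_j^{\mu}(t) e^{-pt} J_{\mu}(c\sqrt t)\,dt$ evaluates (for suitable $p$) to an elementary expression involving $c^\mu$, a power of $p$, and a factor $(p - \text{const})^j$ producing the $e^{-i\alpha\cdot 2j}$ dependence on $j$. This is exactly the computation underlying the known fact that the Clifford--Hermite functions are eigenfunctions of the fractional Clifford--Fourier transform, so I expect to be able to cite \cite{DBDSFr} or \cite{DBXu} for the precise integral identity rather than reprove it.

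After the integral is evaluated, the remaining work is bookkeeping: one must check that the radial integral reproduces $\psi_{2j,k,\ell}$ (resp. $\psi_{2j+1,k,\ell}$) up to the claimed scalar, so that one reads off the eigenvalue. The $j$-dependence should come out as $e^{-2ij\alpha}$ from the $(p-\text{const})^j$ factor; the $k$-dependence $i^k e^{-ik\alpha}$ (resp. $i^{k+1}e^{-i(k+1)\alpha}$) and the prefactor $\frac{2^{-\l}}{\Gamma(\l+1)}$ should emerge by combining $c_m$, the normalization $\rho_{\alpha,m}$ absorbed into $c_m$, the $(\widetilde z)^{-\l}$ and $J_{k+\l}$ powers of $|y|$ and $\sin\alpha$, and the Gegenbauer-to-monogenic bracket $\bigl(\frac{\l}{\l+k}\alpha_k - \sin\alpha\,\frac{k}{2(\l+k)}\beta_k\bigr)$ already isolated in Proposition \ref{radbehavior1}. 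For the second family one additionally uses that $\underline\eta\, M_k(\eta)$ recombines with $\ux$ to give back $\ux M_k^{(\ell)}$ after the radial factor is identified with $L_j^{\l+k+1}(|y|^2)e^{-|y|^2/2}$.

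The main obstacle I anticipate is keeping the constants, the powers of $\sin\alpha$, $(1-e^{-2i\alpha})$, and the branch of the complex square root $(p-\text{const})^{1/2}$ consistent so that the final eigenvalue collapses cleanly to $e^{-i\alpha(k+2j)}$ (resp. $e^{-i\alpha(k+2j+1)}$); sign and phase errors in the fractional exponentials are easy to make. A secondary technical point is justifying convergence and the interchange of summation (over $k$ in the kernel series) with integration, but since only one term of the Gegenbauer expansion survives against a fixed spherical monogenic $M_k$ — by orthogonality of Gegenbauer polynomials on the sphere, as already used in Proposition \ref{radbehavior1} — this reduces to the polynomial-bound hypothesis on $A$ and $B$ and can be handled exactly as in \cite{DBDSFr}. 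Given the proposition, I would expect the whole argument to be short, essentially one nontrivial integral plus careful constant-chasing.
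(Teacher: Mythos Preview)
Your proposal is correct and matches the paper's own proof: the paper also invokes Proposition \ref{radbehavior1} together with the explicit basis (\ref{basis}) and then evaluates the remaining radial integral via the Laguerre--Bessel identity
\[
\int_{0}^{+\infty} x^{\nu+1} e^{-\beta x^2} L_{n}^{\nu}(x^{2}) J_{\nu}(xy)\,dx
= 2^{-\nu-1} \beta^{-\nu-n-1} (\beta-1)^n y^{\nu} e^{-\frac{y^2}{4\beta}} L_n^{\nu}\!\left( \tfrac{y^2}{4\beta(1-\beta)} \right)
\]
(cited from \cite{Grad}), which is exactly the Hankel-transform-of-Laguerre formula you describe; the constant-chasing you anticipate is all that remains.
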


\begin{proof}
This follows from the explicit expression (\ref{basis}) of the basis and the identity (see \cite[p. 847, formula 7.421, number 4 with $\alpha=1$]{Grad}):
\begin{align*}
&\int_{0}^{+\infty} x^{\nu+1} e^{-\b x^2} L_{n}^{\nu}(x^{2}) J_{\nu}(xy) dx\\
& = 2^{-\nu-1} \b^{-\nu-n-1} (\b-1)^n y^{\nu} e^{-\frac{y^2}{4 \b}} L_n^{\nu} \left( \frac{y^2}{4 \b (1-\b)} \right). 
\end{align*}
 \end{proof}

\begin{remark}
Theorem \ref{eigenvalues} is very important; it allows us to design a hypercomplex Fourier transform $\cF_K$ by prescribing the eigenvalues on the basis $\{\psi_{j,k,\ell}\}$ via 
\begin{align*}
\cF_K(\psi_{2j,k,\ell}) &= \lambda_{k}e^{-i \alpha 2j} \psi_{2j,k,\ell}\\
\cF_K(\psi_{2j+1,k,\ell}) &= \mu_{k}e^{-i \alpha (2j+1)} \psi_{2j+1,k,\ell}
\end{align*}
for any set of numbers $\lambda_{k}, \mu_{k} \in \mC$.
Indeed, it suffices to solve the system of equations
\begin{align*}
\lambda_{k}&= \frac{2^{-\l}}{\Gamma(\l+1)}  \left(\frac{\l}{\l+k} \alpha_{k} - \sin{\alpha} \ \frac{k}{2(\l+k)} \beta_{k} \right) i^k e^{-i \alpha k}\\
\mu_{k}&=   \left(\frac{\l}{\l+k+1} \alpha_{k+1} + \sin{\alpha} \ \frac{k+1+2\l}{2(\l+k+1)} \beta_{k+1} \right)\\
& \quad \times\frac{2^{-\l}}{\Gamma(\l+1)}  i^{k+1} e^{-i \alpha k} 
\end{align*}
to determine the integral kernel $K(x,y)$ in terms of the coefficients $\alpha_k$ and $\beta_k$.
\end{remark}


\subsubsection{Inverse transform}
\label{SectionInverse}
In order to construct the inverse of the general transform $\cF_K$ on the basis $\lbrace \psi_{j,k,\ell} \rbrace$ we consider the following integral transform:
\begin{align*}
\cF_K^{\ast} \left( f \right) (y) &=\rho_{-\a,m} \int_{\mR^{m}} K^{\ast}(x, y) \ f(x) \ dx,
\end{align*}
where the kernel is given by
\begin{align*}
K^{\ast}(x, y) &= \left( A^{\ast}(w, \widetilde{z}) + (\ux \wedge \uy)   B^{\ast}(w, \widetilde{z}) \right) \\
& \quad \times e^{-\frac{i}{2}( \cot \alpha) (|x|^2 + |y|^2)} 
\end{align*}
with
\begin{align*}
A^{\ast}(w,\widetilde{z}) &= \sum_{k=0}^{+\infty} (-1)^k \gamma_{k} \ (\widetilde{z})^{-\l}J_{k+\l}(\widetilde{z}) C^{\l}_{k}(w)\\
B^{\ast}(w,\widetilde{z}) &=  \sum_{k=1}^{+\infty} (-1)^{k+1} \ \delta_{k} \ (\widetilde{z})^{-\l-1} J_{k+\l}(\widetilde{z}) C^{\l+1}_{k-1}(w)
\end{align*}
and $\gamma_{k}, \delta_{k} \in \mC$, $\widetilde{z}= (|x|  |y|)/\sin{\alpha}$, $w=\langle x, y \rangle/(|x||y|)$, $\lambda=(m-2)/2$.

Similarly as for the transform $\cF_K$, we can consecutively calculate the radial behavior of the transform $\cF_K^{\ast}$ and determine its action on the basis $\lbrace \psi_{j,k,\ell} \rbrace$. This yields the following result.
\begin{theorem}
\label{eigenvalues2}
One has, putting $\delta_{0}=0$,
\begin{align*}
&\cF_K^{\ast} ( \psi_{2j,k,\ell})\\
 & = \frac{2^{-\l}}{\Gamma(\l+1)}  \left(\frac{\l}{\l+k} \gamma_{k} + \sin{\alpha} \ \frac{k}{2(\l+k)} \delta_{k} \right)\\
 & \quad \times i^k e^{i \alpha (k+2j)} \ \psi_{2j,k,\ell}
 \end{align*}
and
\begin{align*}
&\cF_K^{\ast} ( \psi_{2j+1,k,\ell})\\
  & = \frac{2^{-\l}}{\Gamma(\l+1)}  \left(\frac{\l}{\l+k+1} \gamma_{k+1} - \sin{\alpha}  \frac{k+1+2\l}{2(\l+k+1)} \delta_{k+1} \right) \\ 
&\quad  \times i^{k+1}e^{i\alpha(k+2j+1)} \ \psi_{2j+1,k,\ell}.
\end{align*}
\end{theorem}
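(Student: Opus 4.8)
The plan is to mirror exactly the derivation that produced Theorem \ref{eigenvalues}, since the transform $\cF_K^{\ast}$ differs from $\cF_K$ only by the substitutions $\alpha_k \mapsto (-1)^k \gamma_k$, $\beta_k \mapsto (-1)^{k+1}\delta_k$, and the sign flip $\alpha \mapsto -\alpha$ appearing in both the normalisation $\rho_{-\a,m}$ and in the factor $e^{-\frac{i}{2}(\cot\alpha)(|x|^2+|y|^2)}$ (equivalently $\cot\alpha \mapsto -\cot\alpha$, $\sin\alpha$ unchanged). First I would establish the analogue of Proposition \ref{radbehavior1} for $\cF_K^{\ast}$: applying $\cF_K^{\ast}$ to $f(r)M_k$ and to $f(r)\ux M_k$, one carries out the angular integration over the sphere using the Funk--Hecke theorem exactly as in the proof of Theorem 6.4 in \cite{DBXu}. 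The only new bookkeeping is tracking the extra $(-1)^k$ and $(-1)^{k+1}$ factors and the fact that $\cot\alpha$ is now negated; the angular integrals involving the Gegenbauer polynomials $C_k^{\l}(w)$ and $C_{k-1}^{\l+1}(w)$ against $M_k(\eta)$ are identical, so the combination of coefficients that survives is $\frac{\l}{\l+k}\gamma_k + \sin\alpha\,\frac{k}{2(\l+k)}\delta_k$ in the scalar-monogenic case (the relative sign between the two terms flips precisely because $\beta_k$ is replaced by $(-1)^{k+1}\delta_k$ while $\alpha_k$ is replaced by $(-1)^k\gamma_k$, i.e. an extra overall $(-1)$ on the $\delta$-term relative to the $\gamma$-term), and analogously $\frac{\l}{\l+k+1}\gamma_{k+1} - \sin\alpha\,\frac{k+1+2\l}{2(\l+k+1)}\delta_{k+1}$ in the $\ux M_k$ case.

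Next, I would insert the explicit form (\ref{basis}) of the basis functions, so that the radial integral becomes, after the change of variables $t = r^2$, an integral of the shape
\[
\int_0^{+\infty} t^{\frac{m}{2}+k-1} e^{-\b t} L_j^{\frac{m}{2}+k-1}(t)\, J_{k+\l}\!\left(\tfrac{|y|}{\sin\alpha}\sqrt{t}\right) dt,
\]
and similarly with $\frac{m}{2}+k$ in the second case. Here the key point is that the factor $e^{-\frac{i}{2}(\cot\alpha)r^2}$ coming from the kernel combines with the Gaussian $e^{-r^2/2}$ inside $\psi_{j,k,\ell}$ to give $\b = \frac12 - \frac{i}{2}\cot\alpha = \frac{1-i\cot\alpha}{2}$, whereas in Theorem \ref{eigenvalues} the corresponding constant was $\b = \frac{1+i\cot\alpha}{2}$. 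Applying the Gradshteyn--Ryzhik identity 7.421.4 (with $\alpha=1$) quoted in the proof of Theorem \ref{eigenvalues} then reproduces a $\psi_{2j,k,\ell}$ (resp. $\psi_{2j+1,k,\ell}$) on the right-hand side, and the eigenvalue is read off from the prefactors $2^{-\nu-1}\b^{-\nu-n-1}(\b-1)^n$ together with the Gaussian argument on the Laguerre polynomial, exactly as before. The sign change in $\b$ is what converts the factor $e^{-i\alpha(k+2j)}$ of Theorem \ref{eigenvalues} into $e^{+i\alpha(k+2j)}$ here, since $\b/(\b-1)$ and $1/\b$ are the quantities whose arguments produce the $e^{\pm i\alpha(\cdot)}$ phases; the powers of $i$ are unchanged because $\sin\alpha$ is unchanged.

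The main obstacle, such as it is, is purely organisational: one must be scrupulous about three separate sign conventions at once (the $(-1)^k$ in $A^{\ast}$, the $(-1)^{k+1}$ in $B^{\ast}$, and the $\cot\alpha \mapsto -\cot\alpha$ flip in both $\rho$ and the Gaussian factor) and check that they combine to give precisely the stated relative signs $+\sin\alpha$ and $-\sin\alpha$ in the two eigenvalue formulas and the conjugated phase $e^{+i\alpha(\cdot)}$. A clean way to avoid error is to define $\cF_K^{\ast}$ abstractly as the transform obtained from $\cF_K$ by the parameter substitution above, and then simply to substitute into the already-proved Theorem \ref{eigenvalues}, noting that the $(-1)^k$ absorbs $i^k \mapsto (-i)^k$ compensations appropriately; but since the paper's style is to state these as independent computations, I would instead just run the Funk--Hecke plus Gradshteyn--Ryzhik argument once more and record the result, remarking that the computation is "entirely analogous to the proof of Theorem \ref{eigenvalues}." No genuinely new analytic input is required.
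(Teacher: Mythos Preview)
Your approach is correct and is exactly what the paper does: its entire proof reads ``Similar to the proof of Theorem \ref{eigenvalues}.'' One small slip in your bookkeeping: combining $e^{-\frac{i}{2}(\cot\alpha)r^2}$ with $e^{-r^2/2}$ yields $\b=\tfrac{1+i\cot\alpha}{2}$, not $\tfrac{1-i\cot\alpha}{2}$ (you have the two $\b$'s interchanged between $\cF_K$ and $\cF_K^{\ast}$), but since the two values are complex conjugates your conclusion that the phase flips from $e^{-i\alpha(\cdot)}$ to $e^{+i\alpha(\cdot)}$ is unaffected.
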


\begin{proof}
Similar to the proof of Theorem \ref{eigenvalues}.
 \end{proof}

Combining Theorem \ref{eigenvalues} and \ref{eigenvalues2}, we are now able to construct the inverse of the general transform $\cF_K$ on the basis $\lbrace \psi_{j,k,\ell} \rbrace$.
\begin{theorem}\label{inverse}
The inverse of $\cF_K$ on the basis $\lbrace \psi_{j,k,\ell} \rbrace$ is given by
\begin{align*}
\cF_K^{-1} \left( f \right) (y) &=\rho_{-\a,m} \int_{\mR^{m}} \widetilde{K(x,y)} \ f(x) \ dx,
\end{align*}
with
\begin{align*}
\widetilde{K(x,y)} &= \left( \widetilde{A(w, \widetilde{z})} + (\ux \wedge \uy)   \widetilde{B(w, \widetilde{z})} \right) \\
& \quad \times e^{-\frac{i}{2}( \cot \alpha) (|x|^2 + |y|^2)} 
\end{align*}
where
\begin{align*}
\widetilde{A(w,\widetilde{z})} &= \sum_{k=0}^{+\infty} \frac{1}{N_k ^{\lambda}} ( \alpha_k+\beta_k \ \sin{\alpha}) \ (\widetilde{z})^{-\l}J_{k+\l}(\widetilde{z}) C^{\l}_{k}(w)\\
\widetilde{B(w,\widetilde{z})} &= -   \sum_{k=1}^{+\infty} \frac{1}{N_k ^{\lambda}} \beta_k \ (\widetilde{z})^{-\l-1} J_{k+\l}(\widetilde{z}) C^{\l+1}_{k-1}(w),
\end{align*}
and
\begin{align*}
N_k ^{\lambda}&= \frac{1}{2^{2\l} (\Gamma (\l +1))^2} \left(\frac{\l}{\l+k} \alpha_{k} - \sin{\alpha}  \frac{k}{2(\l+k)} \beta_{k} \right)\\
& \quad \times \left(\frac{\l}{\l+k} \alpha_{k} + \sin{\alpha} \frac{k+2\l}{2(\l+k)} \beta_{k} \right).
\end{align*}
\end{theorem}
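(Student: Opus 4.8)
The plan is to realize the transform on the right-hand side as a special case of the auxiliary transform $\cF_K^{\ast}$ defined just above, for a suitable choice of its free parameters $\gamma_k,\delta_k$, and then to check --- using only the eigenvalue formulas of Theorems \ref{eigenvalues} and \ref{eigenvalues2} --- that it undoes $\cF_K$ on every basis vector $\psi_{j,k,\ell}$. First I would compare the kernel $\widetilde{K(x,y)}$ appearing in the statement with $K^{\ast}(x,y)$: both carry the prefactor $\rho_{-\alpha,m}$ and the Gaussian $e^{-\frac{i}{2}(\cot\alpha)(|x|^2+|y|^2)}$, and matching the Gegenbauer expansions coefficient by coefficient identifies the assertion with the choice
\[
\gamma_k=\frac{(-1)^k}{N_k^{\lambda}}\bigl(\alpha_k+\beta_k\sin\alpha\bigr),\qquad \delta_k=\frac{(-1)^k}{N_k^{\lambda}}\,\beta_k .
\]
It then remains to prove that, for this choice, $\cF_K^{\ast}\circ\cF_K=\mathrm{id}$ on the span of $\{\psi_{j,k,\ell}\}$. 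Since $\cF_K$ and $\cF_K^{\ast}$ are both diagonalized on this basis with \emph{scalar} (hence commuting) eigenvalues, $\cF_K\circ\cF_K^{\ast}=\mathrm{id}$ then follows automatically, so the formula really is a two-sided inverse.

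Next I would compute the eigenvalue of the composition on each basis vector by multiplying the eigenvalues of the two factors. On $\psi_{2j,k,\ell}$ the phase factors $e^{-i\alpha(k+2j)}$ and $e^{i\alpha(k+2j)}$ cancel and $i^k\cdot i^k=(-1)^k$, giving the composite eigenvalue
\[
\frac{2^{-2\lambda}}{(\Gamma(\lambda+1))^2}(-1)^k\Bigl(\tfrac{\lambda}{\lambda+k}\alpha_k-\sin\alpha\,\tfrac{k}{2(\lambda+k)}\beta_k\Bigr)\Bigl(\tfrac{\lambda}{\lambda+k}\gamma_k+\sin\alpha\,\tfrac{k}{2(\lambda+k)}\delta_k\Bigr);
\]
on $\psi_{2j+1,k,\ell}$ the phases $e^{\mp i\alpha(k+2j+1)}$ cancel and $i^{k+1}\cdot i^{k+1}=(-1)^{k+1}$, giving
\[
\frac{2^{-2\lambda}}{(\Gamma(\lambda+1))^2}(-1)^{k+1}\Bigl(\tfrac{\lambda}{\lambda+k+1}\alpha_{k+1}+\sin\alpha\,\tfrac{k+1+2\lambda}{2(\lambda+k+1)}\beta_{k+1}\Bigr)\Bigl(\tfrac{\lambda}{\lambda+k+1}\gamma_{k+1}-\sin\alpha\,\tfrac{k+1+2\lambda}{2(\lambda+k+1)}\delta_{k+1}\Bigr).
\]
Note that the odd-index requirement at level $k$ constrains $\gamma_{k+1},\delta_{k+1}$, so it is the companion of the even-index requirement one level up; the two together pin down each pair $(\gamma_k,\delta_k)$, and the boundary case $k=0$ (where $\beta_0=\delta_0=0$) is handled separately and trivially.

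The substance of the argument --- and the only step I expect to be non-routine --- is the algebraic identity that makes both composite eigenvalues equal $1$. Set $P_k:=\tfrac{\lambda}{\lambda+k}\alpha_k-\sin\alpha\,\tfrac{k}{2(\lambda+k)}\beta_k$, which is exactly the scalar factor occurring in the $\cF_K$-eigenvalue, and $R_k:=\tfrac{\lambda}{\lambda+k}\alpha_k+\sin\alpha\,\tfrac{k+2\lambda}{2(\lambda+k)}\beta_k$; then by definition $N_k^{\lambda}=2^{-2\lambda}(\Gamma(\lambda+1))^{-2}\,P_kR_k$. Plugging the coefficients above into the second bracket of the even-index eigenvalue and collapsing the $\beta_k$-terms via $\tfrac{\lambda}{\lambda+k}+\tfrac{k}{2(\lambda+k)}=\tfrac{k+2\lambda}{2(\lambda+k)}$ rewrites it as $\tfrac{(-1)^k}{N_k^{\lambda}}R_k$, so the composite eigenvalue collapses to $\tfrac{2^{-2\lambda}}{(\Gamma(\lambda+1))^2}\cdot\tfrac{P_kR_k}{N_k^{\lambda}}=1$. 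For the odd-index case the companion identity $\tfrac{\lambda}{\lambda+k+1}-\tfrac{k+1+2\lambda}{2(\lambda+k+1)}=-\tfrac{k+1}{2(\lambda+k+1)}$ turns the second bracket into $\tfrac{(-1)^{k+1}}{N_{k+1}^{\lambda}}P_{k+1}$ while the first bracket is precisely $R_{k+1}$, so again the product is $\tfrac{2^{-2\lambda}}{(\Gamma(\lambda+1))^2}\cdot\tfrac{P_{k+1}R_{k+1}}{N_{k+1}^{\lambda}}=1$. The crux is thus recognizing that $N_k^{\lambda}$ is precisely the product of $P_k$ with its ``conjugate'' form $R_k$ --- one factor being the eigenvalue of $\cF_K$, the other the form that emerges after the substitution; this is what forces all the cancellations. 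Implicit throughout is the non-degeneracy hypothesis $N_k^{\lambda}\neq0$ for all $k$, which is exactly the requirement that $\cF_K$ be invertible on the span of $\{\psi_{j,k,\ell}\}$.
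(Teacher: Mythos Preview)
Your proposal is correct and follows essentially the same approach as the paper: both reduce the inversion statement to the pair of algebraic conditions on $(\gamma_k,\delta_k)$ coming from the eigenvalue formulas of Theorems~\ref{eigenvalues} and~\ref{eigenvalues2}. The only cosmetic difference is that the paper writes down the two equations and says ``solving yields the statement,'' whereas you identify the proposed $\gamma_k,\delta_k$ from the kernel and then verify (via your $P_k,R_k$ notation) that they satisfy those equations --- the underlying algebra is identical.
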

\begin{proof}
Put $\widetilde{K(x,y)} = \left( \widetilde{A(w,\widetilde{z})} + (\ux \wedge \uy)   \widetilde{B(w,\widetilde{z})} \right)$\\ $\times e^{-\frac{i}{2}( \cot \alpha) (|x|^2 + |y|^2)} $ where
\begin{align*}
\widetilde{A(w,\widetilde{z})} &= \sum_{k=0}^{+\infty} (-1)^k \gamma_{k} \ (\widetilde{z})^{-\l} J_{k+\l}(\widetilde{z}) C^{\l}_{k}(w)\\
\widetilde{B(w,\widetilde{z})} &=  \sum_{k=1}^{+\infty} (-1)^{k+1} \delta_{k} \ (\widetilde{z})^{-\l-1} J_{k+\l}(\widetilde{z}) C^{\l+1}_{k-1}(w)
\end{align*}
and with $\gamma_{k}, \delta_{k} \in \mC$. We need to have that
\[
\cF_K^{-1} \bigl( \cF_K \left( f \right) \bigr) = \cF_K \bigl( \cF_K^{-1} \left( f \right) \bigr) = f.
\]
Using Theorem \ref{eigenvalues} and  \ref{eigenvalues2}, this condition is equivalent with the system of equations ($k=0,1, \ldots$)
\begin{align*}
&\left(\frac{\l}{\l+k} \alpha_{k} - \sin{\alpha} \frac{k}{2(\l+k)} \beta_{k} \right)\\
& \times \left(\frac{\l}{\l+k} \gamma_{k} + \sin{\alpha} \frac{k}{2(\l+k)} \delta_{k} \right) =(-1)^k (\Gamma(\l+1))^2 2^{2 \l}
\end{align*}
and
\begin{align*}
&\left(\frac{\l}{\l+k} \alpha_{k} + \sin{\alpha} \frac{k+2\l}{2(\l+k)} \beta_{k} \right)\\
& \times \left(\frac{\l}{\l+k} \gamma_{k} - \sin{\alpha} \frac{k+2\l}{2(\l+k)} \delta_{k} \right)= (-1)^k (\Gamma(\l+1))^2 2^{2 \l}.
\end{align*}
Solving these two equations for $\gamma_k$ and $\delta_k$ then yields the statement of the theorem.
 \end{proof}


\subsection{Four types of generalized convolution}
\label{secConv}

\subsubsection{Definitions based on an idea of Mustard}
The definitions of the first two types of convolutions are based on the observation that in the classical case the following interaction between the convolution and the Fourier transform holds:
\begin{displaymath}
\cF ( f \ast g ) = (2 \pi)^{m/2} \cF\left( f \right) \ \cF\left( g \right).
\end{displaymath}
In our case, this leads to the following:
\begin{definition}\label{conv2} 
For $f,g \in \cS(\mR^{m}) \otimes \cC l_{0,m}$, the generalized convolution $f \ast_{C,L} g$ is defined for $x \in \mathbb{R}^m$ by
\begin{displaymath}
\left( f \ast_{C,L} g \right)(x):= \rho_{\a,m}^{-1} \cF_K^{-1} \left( \cF_K\left( f \right) \ \cF_K \left( g \right) \right) (x).
\end{displaymath}
Similarly, the generalized convolution $f \ast_{C,R} g$ takes the form 
\begin{displaymath}
\left( f \ast_{C,R} g \right)(x):= \rho_{\a,m}^{-1}  \cF_K^{-1} \left( \cF_K\left( g \right) \ \cF_K \left( f \right) \right) (x).
\end{displaymath}
\end{definition}
Note that, as already mentioned in the introduction, $f \ast_{C,R} g = g \ast_{C,L} f$. 

Taking into account the integral expression for $\cF_K$ and its inverse $\cF_K^{-1}$, see formula (\ref{gentransform}) and Theorem \ref{inverse}, we obtain the following explicit formulas.
\begin{proposition}\label{explicitconv2}
The generalized convolutions $f \ast_{C,L} g$ and $f \ast_{C,R} g$ take the following explicit form:
\begin{align*}
\left(f \ast_{C,L} g \right)(x)
&=c_{\alpha, m} \int_{\mathbb{R}^m} \int_{\mathbb{R}^m} \int_{\mathbb{R}^m} \widetilde{K(u,x)}\\
& \quad \times K(t,u) f(t) K(y,u) g(y)  dt dy du
\end{align*}
and
\begin{align*}
\left(f \ast_{C,R} g \right)(x)
&=c_{\alpha, m} \int_{\mathbb{R}^m} \int_{\mathbb{R}^m} \int_{\mathbb{R}^m} \widetilde{K(u,x)}\\
& \quad \times K(y,u) g(y) K(t,u) f(t) dt du dy,
\end{align*}
with
\begin{align*}
c_{\alpha, m}&= \rho_{\a,m} \rho_{-\a,m}\\
&=\left( \pi (1-e^{2i\alpha}) \right)^{-m/2} \left( \pi (1-e^{-2i\alpha}) \right)^{-m/2}\\
& = (2 \pi)^{-m} |\sin{\alpha}|^{-m}.
\end{align*}
\end{proposition}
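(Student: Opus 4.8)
The plan is to substitute the integral expressions for $\cF_K$ and its inverse $\cF_K^{-1}$ directly into the definitions of $f \ast_{C,L} g$ and $f \ast_{C,R} g$ from Definition \ref{conv2}, and then unravel the nested integrals while carefully tracking the order of the Clifford-algebra-valued factors, which cannot be permuted.

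First I would write out, using (\ref{gentransform}),
\[
\cF_K(f)(u) = \rho_{\a,m} \int_{\mR^m} K(t,u) f(t)\, dt, \qquad \cF_K(g)(u) = \rho_{\a,m} \int_{\mR^m} K(y,u) g(y)\, dy.
\]
Multiplying these two (in the order dictated by $\ast_{C,L}$, namely $\cF_K(f)(u)\,\cF_K(g)(u)$) produces a double integral over $t$ and $y$ with integrand $\rho_{\a,m}^2\, K(t,u) f(t) K(y,u) g(y)$; note that $K(t,u)$, $f(t)$, $K(y,u)$, $g(y)$ must be kept in exactly this left-to-right order since the Clifford algebra is non-commutative. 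Then I would apply $\cF_K^{-1}$, whose kernel is $\widetilde{K(u,x)}$ and whose normalising constant is $\rho_{-\a,m}$ (Theorem \ref{inverse}), acting on the $u$-variable: this contributes a further integral over $u$ with $\widetilde{K(u,x)}$ placed on the far left, and the prefactor $\rho_{-\a,m}$. Finally, multiplying by the external factor $\rho_{\a,m}^{-1}$ from Definition \ref{conv2}, the scalar constants combine as $\rho_{\a,m}^{-1} \cdot \rho_{-\a,m} \cdot \rho_{\a,m}^2 = \rho_{\a,m}\rho_{-\a,m} = c_{\a,m}$, and one recognises the claimed triple-integral formula. The computation of $c_{\a,m}$ itself is just the stated algebraic simplification: $(1-e^{2i\a})(1-e^{-2i\a}) = 2 - 2\cos 2\a = 4\sin^2\a$, whence $c_{\a,m} = (\pi)^{-m}(4\sin^2\a)^{-m/2} = (2\pi)^{-m}|\sin\a|^{-m}$. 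For $\ast_{C,R}$ the argument is identical except that the product $\cF_K(g)(u)\,\cF_K(f)(u)$ is taken in the opposite order, so the $g$-integral over $y$ appears to the left of the $f$-integral over $t$, giving the second displayed formula; the order of the three integral signs is merely a matter of how one chooses to iterate, and one may reorder them by Fubini.

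The only genuine subtlety — and the step I would present most carefully — is the legitimacy of interchanging the order of integration (Fubini's theorem) so as to collapse the iterated application of $\cF_K$, multiplication, and $\cF_K^{-1}$ into a single triple integral. This is justified by the Schwartz-space hypothesis $f,g \in \cS(\mR^m) \otimes \cC l_{0,m}$ together with the polynomial bounds on $A(w,\widetilde z)$ and $B(w,\widetilde z)$ assumed in the Remark following (\ref{gentransform}): these bounds ensure $|K(x,y)| \le c(1+|x|)^j(1+|y|)^j$ and likewise for $\widetilde{K}$, so the full integrand is dominated by a product of polynomially-bounded kernels against rapidly decreasing functions, hence is absolutely integrable over $\mR^m \times \mR^m \times \mR^m$. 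With absolute convergence in hand, Fubini applies coordinatewise to the (finitely many) real and imaginary scalar components of the Clifford-valued integrand, and the non-commutative ordering of the factors is preserved throughout because reordering the \emph{integrals} never reorders the \emph{algebra elements} inside the integrand. Once this is noted, the proof is a direct substitution, so I would keep it brief.

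\begin{proof}
By Definition \ref{conv2} and formula (\ref{gentransform}),
\begin{align*}
\left(f \ast_{C,L} g\right)(x) &= \rho_{\a,m}^{-1}\, \cF_K^{-1}\left( \cF_K(f)\,\cF_K(g)\right)(x)\\
&= \rho_{\a,m}^{-1}\, \rho_{-\a,m} \int_{\mR^m} \widetilde{K(u,x)}\, \bigl( \cF_K(f)(u)\,\cF_K(g)(u)\bigr)\, du,
\end{align*}
using the integral expression for $\cF_K^{-1}$ from Theorem \ref{inverse}. Substituting $\cF_K(f)(u) = \rho_{\a,m}\int_{\mR^m} K(t,u) f(t)\, dt$ and $\cF_K(g)(u) = \rho_{\a,m}\int_{\mR^m} K(y,u) g(y)\, dy$ and keeping the Clifford-valued factors in this order, we obtain
\begin{align*}
\left(f \ast_{C,L} g\right)(x) &= \rho_{\a,m}^{-1}\rho_{-\a,m}\rho_{\a,m}^2 \int_{\mR^m}\int_{\mR^m}\int_{\mR^m} \widetilde{K(u,x)}\, K(t,u) f(t)\, K(y,u) g(y)\, dt\, dy\, du.
\end{align*}
The interchange of the order of integration needed to write this as a single iterated integral is justified by Fubini's theorem: since $f,g \in \cS(\mR^m)\otimes \cC l_{0,m}$ and the kernels $K$ and $\widetilde{K}$ satisfy polynomial bounds $|K(x,y)|, |\widetilde{K(x,y)}| \le c(1+|x|)^j(1+|y|)^j$ (see the Remark following (\ref{gentransform})), the integrand is absolutely integrable over $\mR^m \times \mR^m \times \mR^m$, so Fubini applies to each of the finitely many scalar components; reordering the integrals does not affect the non-commutative ordering of the algebra elements in the integrand. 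The scalar prefactor equals
\[
\rho_{\a,m}^{-1}\rho_{-\a,m}\rho_{\a,m}^2 = \rho_{\a,m}\rho_{-\a,m} = \left(\pi(1-e^{2i\a})\right)^{-m/2}\left(\pi(1-e^{-2i\a})\right)^{-m/2}.
\]
Since $(1-e^{2i\a})(1-e^{-2i\a}) = 2 - e^{2i\a} - e^{-2i\a} = 2 - 2\cos 2\a = 4\sin^2\a$, this reduces to $(\pi^2 \cdot 4\sin^2\a)^{-m/2} = (2\pi)^{-m}|\sin\a|^{-m} = c_{\a,m}$, which proves the first formula.

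For $f \ast_{C,R} g$ the computation is identical except that, by Definition \ref{conv2}, one starts from $\cF_K(g)(u)\,\cF_K(f)(u)$ instead of $\cF_K(f)(u)\,\cF_K(g)(u)$. Consequently the factor $\int_{\mR^m} K(y,u) g(y)\, dy$ appears to the left of $\int_{\mR^m} K(t,u) f(t)\, dt$, and after the same application of Fubini's theorem we obtain
\begin{align*}
\left(f \ast_{C,R} g\right)(x) &= c_{\a,m} \int_{\mR^m}\int_{\mR^m}\int_{\mR^m} \widetilde{K(u,x)}\, K(y,u) g(y)\, K(t,u) f(t)\, dt\, du\, dy,
\end{align*}
which is the second formula. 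Finally, the stated evaluation of $c_{\a,m}$ is exactly the simplification carried out above.
\end{proof}
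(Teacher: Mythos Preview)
Your approach is exactly the paper's: the paper gives no detailed proof and simply states that the formulas follow by substituting the integral expressions for $\cF_K$ and $\cF_K^{-1}$ (formula (\ref{gentransform}) and Theorem \ref{inverse}) into Definition \ref{conv2}. Your computation of the constant $c_{\a,m}$ is correct as well.

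One technical point deserves correction, though. Your Fubini justification does not go through as stated: the polynomial bound you cite gives $|\widetilde{K(u,x)}\,K(t,u)\,K(y,u)| \le c^3 (1+|x|)^j (1+|t|)^j (1+|y|)^j (1+|u|)^{3j}$, and there is no Schwartz factor in the $u$-variable to kill the $(1+|u|)^{3j}$ growth. Hence the integrand is in general \emph{not} absolutely integrable over $\mR^m\times\mR^m\times\mR^m$, and Fubini in that strong form is unavailable. Fortunately, Fubini is not actually needed here. The formula arises directly as an \emph{iterated} integral in the order $dt\,dy\,du$: for each fixed $u$ the inner integrals over $t$ and $y$ converge absolutely (Schwartz against polynomial) and yield $\rho_{\a,m}^{-1}\cF_K(f)(u)$ and $\rho_{\a,m}^{-1}\cF_K(g)(u)$, both of which are Schwartz in $u$ by the Remark following (\ref{gentransform}); the outer $u$-integral then converges absolutely for the same reason. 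So the substitution is immediate once the triple integral is read as iterated in the natural order, and you should simply drop the claim of joint absolute integrability.
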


\subsubsection{Definitions using the generalized translation operator}

We first define a generalized translation operator related to the integral transform $\cF_K$ defined in Section \ref{gentran}. 
\begin{definition}
\label{defTrans}
Let $f \in \cS(\mR^{m}) \otimes \cC l_{0,m}$. For $y \in \mathbb{R}^m$ the generalized translation operator $f \longmapsto \tau^K_{y} f$ is defined by
\begin{displaymath}
\cF_K ( \tau^K_{y} f )(x)= K(y,x) \ \cF_K\left( f \right) (x), \qquad x \in \mathbb{R}^m.
\end{displaymath}
\end{definition}
It can be expressed, by the inverse of $\cF_K$ (see Section \ref{SectionInverse}), as an integral operator
\begin{equation}\label{exprtrans}
\tau^K_{y} f(x) = \rho_{-\a,m}\int_{\mathbb{R}^m} \widetilde{K(\xi,x)} \ K(y,\xi) \ \cF_K \left( f \right) (\xi) \ d\xi.
\end{equation}

Using this generalized translation, we can again define two types of convolution for functions with values in the Clifford algebra.
\begin{definition}\label{conv1} 
For $f,g \in \cS(\mR^{m}) \otimes \cC l_{0,m}$, the generalized convolution $f \ast_L g$, resp. $f \ast_R g$, is defined for $x \in \mathbb{R}^m$ by 
\begin{displaymath}
\left( f \ast_L g \right)(x):=  \int_{\mathbb{R}^m} [ \tau^K_{y}f(x) ] \ g(y) \ dy\end{displaymath}
resp.
\begin{displaymath}
\left( f \ast_R g \right)(x):= \int_{\mathbb{R}^m} f(y) [ \tau^K_{y}g(x) ] dy.
\end{displaymath}
\end{definition}

Using the integral expression for the generalized translation operator (see (\ref{exprtrans})):
\begin{align*}
\tau^K_{y}f(x) &= c_{\alpha, m}  \int_{\mathbb{R}^m} \int_{\mathbb{R}^m} \widetilde{K(u,x)} K(y,u) K(t,u) f(t) dt du,
\end{align*}
we obtain the explicit formulas for the generalized convolutions introduced above.
\begin{proposition}\label{explicitconv1}
The convolutions $f \ast_L g$ and $f \ast_R g$ take the following explicit form:
\begin{align*}
\left(f \ast_L g \right)(x)&= c_{\alpha, m}  \int_{\mathbb{R}^m} \int_{\mathbb{R}^m} \int_{\mathbb{R}^m} \widetilde{K(u,x)}\\
& \quad \times K(y,u) K(t,u) f(t) g(y) dt du dy
\end{align*}
and
\begin{align*}
\left(f \ast_R g \right)(x)&=c_{\alpha, m} \int_{\mathbb{R}^m} \int_{\mathbb{R}^m} \int_{\mathbb{R}^m} f(t) \widetilde{K(u,x)} \\
& \quad \times K(t,u) K(y,u) g(y) dy du dt.
\end{align*}
\end{proposition}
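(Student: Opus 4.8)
The plan is to substitute the integral expression for $\tau^K_y f(x)$ from equation (\ref{exprtrans}) directly into the two definitions of $f \ast_L g$ and $f \ast_R g$ given in Definition \ref{conv1}, and then rewrite the resulting nested integrals so that the constant $\rho_{-\a,m}$ is absorbed into $c_{\alpha,m}$ and the order of integration is arranged as claimed. Since this is a purely computational unfolding, the proof is short; the only substantive points are bookkeeping of the non-commutative factors and a Fubini-type justification for interchanging the order of integration.

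First I would recall that, by (\ref{exprtrans}) together with the integral expression for $\cF_K(f)$ in (\ref{gentransform}) (with constant $\rho_{\a,m}$), one has
\begin{align*}
\tau^K_{y}f(x) &= \rho_{-\a,m}\int_{\mathbb{R}^m} \widetilde{K(\xi,x)} \ K(y,\xi) \ \cF_K \left( f \right) (\xi) \ d\xi\\
&= \rho_{-\a,m}\,\rho_{\a,m} \int_{\mathbb{R}^m}\int_{\mathbb{R}^m} \widetilde{K(\xi,x)}\, K(y,\xi)\, K(t,\xi)\, f(t)\, dt\, d\xi,
\end{align*}
which, after renaming $\xi$ to $u$ and using $c_{\alpha,m}=\rho_{\a,m}\rho_{-\a,m}$, is exactly the displayed formula for $\tau^K_y f(x)$ preceding the proposition. (This intermediate identity is itself worth stating, as it is used in the statement.) Here it is essential to keep the Clifford factors in the order $\widetilde{K(u,x)}\,K(y,u)\,K(t,u)\,f(t)$, since the algebra is non-commutative.

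Next, for $f \ast_L g$, I would plug this into $\left( f \ast_L g \right)(x)=\int_{\mathbb{R}^m} [ \tau^K_{y}f(x) ] \ g(y) \ dy$, pulling $g(y)$ to the right of all the $K$-factors and of $f(t)$ (it cannot be moved past them), obtaining the triple integral
\begin{align*}
\left(f \ast_L g \right)(x)&= c_{\alpha, m}  \int_{\mathbb{R}^m} \int_{\mathbb{R}^m} \int_{\mathbb{R}^m} \widetilde{K(u,x)}\, K(y,u)\, K(t,u)\, f(t)\, g(y)\, dt\, du\, dy.
\end{align*}
For $f \ast_R g$, substituting into $\left( f \ast_R g \right)(x)=\int_{\mathbb{R}^m} f(y) [ \tau^K_{y}g(x) ] dy$ and renaming the inner integration variable $y$ (inside $\tau^K$) appropriately, the outer factor $f(y)$ stays on the far left; after renaming the outer variable to $t$ and the inner one to $y$ one reads off
\begin{align*}
\left(f \ast_R g \right)(x)&= c_{\alpha, m} \int_{\mathbb{R}^m} \int_{\mathbb{R}^m} \int_{\mathbb{R}^m} f(t)\, \widetilde{K(u,x)}\, K(t,u)\, K(y,u)\, g(y)\, dy\, du\, dt.
\end{align*}
The main (and really only) obstacle is justifying that all these iterated integrals converge absolutely and may be reordered as written: this follows from the polynomial bounds imposed on $A$ and $B$ in the Remark following (\ref{gentransform}) (hence on $K$ and, by Theorem \ref{inverse}, on $\widetilde{K}$), together with the Schwartz decay of $f$ and $g$, which makes the triple integrand dominated by an integrable function; then Fubini's theorem applies. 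I would simply note that this is established exactly as the continuity of $\cF_K$ on $\cS(\mR^m)\otimes\cC l_{0,m}$ was (by adaptation of the argument in \cite{DBDSFr}), and leave the routine estimate to the reader.
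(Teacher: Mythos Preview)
Your proposal is correct and follows exactly the route taken in the paper: the paper simply displays the integral expression for $\tau^K_{y}f(x)$ obtained by inserting (\ref{gentransform}) into (\ref{exprtrans}), and then states that substitution into Definition~\ref{conv1} yields the proposition, without further detail. Your write-up is in fact slightly more careful, since you track the non-commutative ordering explicitly and justify the use of Fubini via the polynomial bounds on the kernels and the Schwartz decay of $f,g$.
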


\subsubsection{Connection between the four types of convolution and further properties}
\label{connectionconvolutions}

For scalar functions the four types of convolution defined in the previous subsections are strongly related.
\begin{proposition}\label{connection}
Let $f,g \in \cS(\mR^{m})$ be scalar functions. Then one has
\begin{itemize}
\item[(i)] $\left( f \ast_{C,L} g \right)(x) = \left( f \ast_{R} g \right)(x)$
\item[(ii)] $\left( f \ast_{C,R} g \right)(x) = \left( f \ast_{L} g \right)(x)$
\item[(iii)] $\left( f \ast_{L} g \right)(x) = \left( g \ast_{R} f \right)(x)$
\item[(iv)] $\left( f \ast_{C,L} g \right)(x) = \left( g \ast_{C,R} f \right)(x).$
\end{itemize}
\end{proposition}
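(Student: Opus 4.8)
The plan is to prove each of the four identities by comparing the explicit triple-integral formulas from Propositions \ref{explicitconv2} and \ref{explicitconv1}, using crucially that $f$ and $g$ are scalar-valued, so they commute with every Clifford-algebra element and can be freely moved around inside the integrands. For (i), I would start from the explicit form of $f \ast_{C,L} g$, namely
\[
c_{\alpha,m}\int\!\!\int\!\!\int \widetilde{K(u,x)}\,K(t,u)\,f(t)\,K(y,u)\,g(y)\,dt\,dy\,du,
\]
and compare it term by term with the explicit form of $f\ast_R g$,
\[
c_{\alpha,m}\int\!\!\int\!\!\int f(t)\,\widetilde{K(u,x)}\,K(t,u)\,K(y,u)\,g(y)\,dy\,du\,dt.
\]
Since $f(t)$ is scalar it commutes past $\widetilde{K(u,x)}$, and since $g(y)$ is scalar it can be pulled to the right of $K(y,u)$; after relabelling the order of integration (Fubini, justified by the Schwartz-space decay together with the polynomial bounds on $A,B$ assumed in the Remark following \eqref{gentransform}), the two integrands coincide. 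The same scheme proves (ii): compare the explicit form of $f\ast_{C,R}g$ with that of $f\ast_L g$, again moving the scalar factors $f(t)$ and $g(y)$ through the Clifford kernels.

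For (iii) and (iv) I would argue more structurally rather than re-expanding everything. Identity (iii), $f\ast_L g = g\ast_R f$, follows because for scalar functions the definitions $\int [\tau_y^K f(x)]\,g(y)\,dy$ and $\int g(y)\,[\tau_y^K f(x)]\,dy$ are literally equal (scalar $g(y)$ commutes with the Clifford-valued $\tau_y^K f(x)$), and the latter is exactly $g\ast_R f$ by Definition \ref{conv1}. Identity (iv), $f\ast_{C,L}g = g\ast_{C,R}f$, is just the already-noted relation $f\ast_{C,R}g = g\ast_{C,L}f$ with the roles of $f$ and $g$ interchanged, which holds for all (not only scalar) functions; alternatively one combines (i), (ii) and (iii): $f\ast_{C,L}g = f\ast_R g = g\ast_L f = g\ast_{C,R}f$, where the last step is (ii) applied to the pair $(g,f)$. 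In fact, once (i), (ii) and (iii) are in hand, (iv) is immediate, so I would present it that way to keep the proof short.

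The main obstacle is the bookkeeping of kernel factors and integration orders: the kernels $K$ and $\widetilde{K}$ are Clifford-algebra valued and do not commute with each other, so the only freedom available is moving the \emph{scalar} functions $f$ and $g$ and reordering the \emph{real} integration variables. One must check carefully that in each comparison the non-commuting blocks $\widetilde{K(u,x)}$, $K(t,u)$, $K(y,u)$ appear in the \emph{same} left-to-right order in both expressions — which they do, by inspection of Propositions \ref{explicitconv2} and \ref{explicitconv1} — and that the Fubini rearrangements are legitimate, for which the polynomial growth bounds on $A$ and $B$ and the rapid decay of Schwartz functions suffice. No genuinely hard analysis is involved; the content is entirely the observation that commutativity of scalars collapses the four a priori distinct products onto two pairs.
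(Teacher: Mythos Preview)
Your proposal is correct and follows exactly the paper's own approach: the paper's proof simply states that the relations follow immediately from the explicit integral formulas in Propositions \ref{explicitconv2} and \ref{explicitconv1}, which is precisely the comparison you carry out (with the scalar nature of $f,g$ supplying the necessary commutativity). Your discussion is just a more explicit unpacking of that one-line argument.
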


\begin{proof}
These relations follow immediately from the explicit formulas of the convolutions (see Proposition  \ref{explicitconv2} and Proposition \ref{explicitconv1}).
 \end{proof}

The following proposition quantifies the difference between the left and right versions of the generalized convolutions.
\begin{proposition}
Let $f,g \in \cS(\mR^{m})$ be scalar functions. Then one has
\begin{align*}
&\left( f \ast_{L} g \right)(x) - \left( f \ast_{R} g \right)(x)\\
 &= \left( f \ast_{C,R} g \right)(x) - \left( f \ast_{C,L} g \right)(x)\\
&=c_{\alpha, m}
\int_{\mathbb{R}^m} \int_{\mathbb{R}^m} \int_{\mathbb{R}^m} \widetilde{K(u,x)} \left\lbrack K(y,u) , K(t,u) \right\rbrack\\
&\quad \times f(t) g(y) dt du dy,
\end{align*}
where the commutator of $K(y,u)$ and $K(t,u)$ takes the form
\begin{align*}
\left\lbrack K(y,u) , K(t,u) \right\rbrack &= B(w_1,\widetilde{z_1}) B(w_2,\widetilde{z_2})\\
& \quad \times e^{\frac{i}{2} \cot{\alpha}(|y|^2+|t|^2+2|u|^2)}\\
& \quad \times \left( (\uy \wedge \uu)(\ut \wedge \uu) - (\ut \wedge \uu)(\uy \wedge \uu) \right)
\end{align*}
with $\widetilde{z_1} = (|y| |u|)/\sin{\alpha}$, $\widetilde{z_2} = (|t| |u|)/\sin{\alpha}$, $w_1= \la \xi_1,\eta_1 \ra$, $w_2=\la \xi_2,\eta_1 \ra$ where $y = |y| \xi_1$, $u = |u| \eta_1$and $t = |t| \xi_2$.
\end{proposition}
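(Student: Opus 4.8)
The plan is to start from the explicit integral formulas for the four convolutions provided in Proposition~\ref{explicitconv1} and Proposition~\ref{explicitconv2}, and simply subtract. Since $f$ and $g$ are scalar, all the function factors $f(t)$, $g(y)$ commute past everything, so the only obstruction to rearranging the kernel factors in $f\ast_L g$ versus $f\ast_R g$ is the non-commutativity of the Clifford-algebra valued kernels $K(y,u)$ and $K(t,u)$. Writing out
\begin{align*}
(f\ast_L g)(x) &= c_{\alpha,m}\int\!\!\int\!\!\int \widetilde{K(u,x)}\,K(y,u)\,K(t,u)\,f(t)g(y)\,dt\,du\,dy,\\
(f\ast_R g)(x) &= c_{\alpha,m}\int\!\!\int\!\!\int \widetilde{K(u,x)}\,K(t,u)\,K(y,u)\,f(t)g(y)\,dt\,du\,dy,
\end{align*}
where in the second line I have used that $f(t)$ is scalar to move it to the right and relabelled the order of integration, the difference is immediately
\[
(f\ast_L g)(x)-(f\ast_R g)(x)=c_{\alpha,m}\int\!\!\int\!\!\int \widetilde{K(u,x)}\bigl[K(y,u),K(t,u)\bigr]f(t)g(y)\,dt\,du\,dy.
\]
The equality with $(f\ast_{C,R}g)(x)-(f\ast_{C,L}g)(x)$ then follows from parts (i) and (ii) of Proposition~\ref{connection}, which identify $f\ast_{C,L}g=f\ast_R g$ and $f\ast_{C,R}g=f\ast_L g$ in the scalar case.

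It remains to compute the commutator $[K(y,u),K(t,u)]$ explicitly. Here I would substitute the general form (\ref{genkernel}) of the kernel, namely $K(y,u)=\bigl(A(w_1,\widetilde{z_1})+(\uy\wedge\uu)B(w_1,\widetilde{z_1})\bigr)e^{\frac{i}{2}\cot\alpha(|y|^2+|u|^2)}$ and similarly for $K(t,u)$. The scalar exponential prefactors and the scalar-valued quantities $A$, $B$ (they depend only on the real variables $w_i,\widetilde{z_i}$) are central, so they pull out of the commutator; the product of the two exponentials gives $e^{\frac{i}{2}\cot\alpha(|y|^2+|t|^2+2|u|^2)}$. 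Expanding the product of the two bracketed factors, the $A\cdot A$ term is scalar and cancels in the commutator, and the two cross terms $A(w_1,\widetilde{z_1})(\ut\wedge\uu)B(w_2,\widetilde{z_2})$ and $(\uy\wedge\uu)B(w_1,\widetilde{z_1})A(w_2,\widetilde{z_2})$ — being products of a scalar with a single bivector $\ut\wedge\uu$ or $\uy\wedge\uu$ — also cancel against their counterparts from $K(t,u)K(y,u)$, since $A$ and $B$ are scalars and hence commute with the bivectors. What survives is precisely the $B\cdot B$ term:
\[
[K(y,u),K(t,u)]=B(w_1,\widetilde{z_1})B(w_2,\widetilde{z_2})\,e^{\frac{i}{2}\cot\alpha(|y|^2+|t|^2+2|u|^2)}\bigl((\uy\wedge\uu)(\ut\wedge\uu)-(\ut\wedge\uu)(\uy\wedge\uu)\bigr),
\]
which is the claimed formula, the radial/angular variables being read off as $\widetilde{z_1}=(|y||u|)/\sin\alpha$, $\widetilde{z_2}=(|t||u|)/\sin\alpha$, $w_1=\la\xi_1,\eta_1\ra$, $w_2=\la\xi_2,\eta_1\ra$ with $y=|y|\xi_1$, $t=|t|\xi_2$, $u=|u|\eta_1$.

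There is essentially no hard step here: the result is a bookkeeping exercise in exploiting that everything except the two bivector factors is central. The only point requiring a moment's care is the relabelling of integration variables and the use of scalarity of $f$ to bring $(f\ast_R g)$ into a form directly comparable with $(f\ast_L g)$ — one must check that Fubini applies, which is guaranteed by the Schwartz-class hypothesis together with the polynomial bounds on $A$ and $B$ imposed in the Remark following (\ref{gentransform}). I would also remark that the commutator of the two bivectors $(\uy\wedge\uu)(\ut\wedge\uu)-(\ut\wedge\uu)(\uy\wedge\uu)$ does not in general vanish (it lies in $\cC l_{0,m}^{2}\oplus\cC l_{0,m}^{4}$, with the $4$-vector part antisymmetric under the swap), which is what makes the left and right convolutions genuinely distinct even for scalar inputs.
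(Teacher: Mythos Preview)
Your proof is correct and is exactly the argument the paper has in mind; in fact the paper states this proposition without proof, treating it as an immediate consequence of the explicit integral formulas in Propositions~\ref{explicitconv2} and~\ref{explicitconv1} together with Proposition~\ref{connection}, which is precisely what you do. One small inaccuracy in your closing remark: the commutator of two bivectors is purely a bivector (the scalar and $4$-vector parts of a product of bivectors are symmetric under the swap), not an element of $\cC l_{0,m}^{2}\oplus\cC l_{0,m}^{4}$; but this side comment plays no role in the argument.
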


From the definition of the convolutions $f \ast_{C,L} g$ and $f \ast_{C,R} g$ we immediately obtain the following result.
\begin{theorem}
For $f,g \in \cS(\mR^{m}) \otimes \cC l_{0,m}$, one has
\begin{displaymath}
\cF_K ( f \ast_{C,L} g ) = \rho_{\a,m}^{-1}   \cF_K \left( f \right)  \cF_K \left( g \right)
\end{displaymath}
and
\begin{displaymath}
\cF_K ( f \ast_{C,R} g ) = \rho_{\a,m}^{-1} \cF_K \left( g \right)  \cF_K \left( f \right).
\end{displaymath}
\end{theorem}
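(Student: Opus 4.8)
The plan is to derive both identities directly from Definition \ref{conv2}, using nothing more than the fact that $\cF_K^{-1}$ is a genuine two-sided inverse of $\cF_K$ on the relevant function space (Theorem \ref{inverse}). Concretely, for the first identity I would start from
\[
\left( f \ast_{C,L} g \right)(x) = \rho_{\a,m}^{-1}\, \cF_K^{-1}\!\left( \cF_K(f)\, \cF_K(g) \right)(x),
\]
apply $\cF_K$ to both sides, and invoke $\cF_K \circ \cF_K^{-1} = \mathrm{id}$ on $\cS(\mR^m) \otimes \cC l_{0,m}$ to collapse the right-hand side to $\rho_{\a,m}^{-1}\, \cF_K(f)\, \cF_K(g)$. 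The second identity follows the same line, starting instead from the definition of $f \ast_{C,R} g$ with the factors $\cF_K(g)\, \cF_K(f)$ in the reversed order.

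The one point that needs a small amount of care is that applying $\cF_K$ to the defining formula is legitimate only if $\cF_K(f)\, \cF_K(g)$ again lies in the domain on which $\cF_K^{-1}$ inverts $\cF_K$, i.e. in (a suitable completion of) $\cS(\mR^m) \otimes \cC l_{0,m}$. Since $\cF_K$ maps $\cS(\mR^m) \otimes \cC l_{0,m}$ continuously into itself (as noted in the Remark following \eqref{gentransform}, by adaptation of the proof in \cite{DBDSFr}), both $\cF_K(f)$ and $\cF_K(g)$ are Schwartz-class Clifford-valued functions, and their pointwise product is as well because $\cS(\mR^m)$ is an algebra under pointwise multiplication and $\cC l_{0,m}$ is finite-dimensional. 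Hence $\cF_K^{-1}\!\left( \cF_K(f)\, \cF_K(g) \right)$ is well-defined and $\cF_K$ may be applied to it, so that $\cF_K\bigl(\rho_{\a,m}^{-1}\, \cF_K^{-1}(\cF_K(f)\,\cF_K(g))\bigr) = \rho_{\a,m}^{-1}\,\cF_K(f)\,\cF_K(g)$.

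There is essentially no obstacle here beyond this bookkeeping; the statement is really a tautological consequence of the way the Mustard-type convolution was defined, which is precisely its appeal (it is engineered so that a convolution theorem holds). I would therefore keep the proof to two or three lines: state that by Definition \ref{conv2} and the fact that $\cF_K^{-1}$ is the two-sided inverse of $\cF_K$ (Theorem \ref{inverse}), applying $\cF_K$ to both sides of the defining equations yields the claimed formulas, with the non-commutativity of $\cC l_{0,m}$ accounting for the two distinct versions.
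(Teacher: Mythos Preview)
Your proposal is correct and matches the paper's own treatment: the paper states that the result follows immediately from Definition~\ref{conv2}, which is exactly the argument you give (apply $\cF_K$ and use $\cF_K \circ \cF_K^{-1} = \mathrm{id}$). Your added remark about $\cS(\mR^m)\otimes\cC l_{0,m}$ being closed under pointwise products is a welcome bit of rigor that the paper leaves implicit.
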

Taking into account Proposition \ref{connection} we obtain in case of scalar functions a similar result for the convolutions $f \ast_{L} g$ and $f \ast_{R} g$.
\begin{proposition} 
For $f,g \in \cS(\mR^{m})$ scalar functions, one has
\begin{displaymath}
\cF_K ( f \ast_{L} g ) =\rho_{\a,m}^{-1}   \cF_K \left( g \right)  \cF_K \left( f \right)
\end{displaymath}
and
\begin{displaymath}
\cF_K ( f \ast_{R} g ) = \rho_{\a,m}^{-1} \cF_K \left( f \right)  \cF_K \left( g \right).
\end{displaymath}
\end{proposition}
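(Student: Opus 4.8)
The plan is to deduce this Proposition directly from the preceding Theorem together with parts (i) and (ii) of Proposition \ref{connection}, without touching any integrals. Recall that the Theorem immediately above states, for $f,g \in \cS(\mR^{m}) \otimes \cC l_{0,m}$, that $\cF_K(f \ast_{C,L} g) = \rho_{\a,m}^{-1} \cF_K(f)\,\cF_K(g)$ and $\cF_K(f \ast_{C,R} g) = \rho_{\a,m}^{-1}\cF_K(g)\,\cF_K(f)$. Since the hypothesis here is that $f$ and $g$ are \emph{scalar} functions, Proposition \ref{connection} applies and gives the pointwise identities $f \ast_{R} g = f \ast_{C,L} g$ and $f \ast_{L} g = f \ast_{C,R} g$.

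The argument then runs as follows. First I would apply $\cF_K$ to the identity $(f \ast_R g)(x) = (f \ast_{C,L} g)(x)$ from Proposition \ref{connection}(i); since these are equal as functions of $x$, their transforms agree, so $\cF_K(f \ast_R g) = \cF_K(f \ast_{C,L} g)$. Substituting the first formula of the Theorem yields $\cF_K(f \ast_R g) = \rho_{\a,m}^{-1}\cF_K(f)\,\cF_K(g)$, which is the second claimed equation. Second, I would do the same with Proposition \ref{connection}(ii), $(f \ast_L g)(x) = (f \ast_{C,R} g)(x)$: taking $\cF_K$ of both sides and using the second formula of the Theorem gives $\cF_K(f \ast_L g) = \rho_{\a,m}^{-1}\cF_K(g)\,\cF_K(f)$, which is the first claimed equation. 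That completes the proof.

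There is essentially no obstacle here: the work has already been done in establishing Proposition \ref{connection} (which rests on comparing the triple-integral formulas of Propositions \ref{explicitconv2} and \ref{explicitconv1}, valid for scalar $f,g$ because scalars commute through the Clifford-valued kernels) and the Theorem (which is immediate from Definition \ref{conv2} and the fact that $\cF_K^{-1}$ inverts $\cF_K$). The only point deserving a word of care is that applying $\cF_K$ to a pointwise identity between Schwartz-class functions is legitimate — which it is, since $\cF_K$ is a well-defined linear map on $\cS(\mR^{m}) \otimes \cC l_{0,m}$ by the remarks in Section \ref{gentran}, and the convolutions of scalar Schwartz functions again lie in this space. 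Hence the proposition follows purely formally, and I would simply write: "This follows by applying $\cF_K$ to the identities in Proposition \ref{connection}(i)--(ii) and invoking the previous Theorem."
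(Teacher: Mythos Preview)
Your proof is correct and is exactly the approach the paper takes: the statement is presented immediately after the Theorem with the remark ``Taking into account Proposition \ref{connection} we obtain in case of scalar functions a similar result,'' i.e., combine Proposition \ref{connection}(i)--(ii) with the preceding Theorem precisely as you describe.
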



\subsection{Generalized translation of radial functions}
\label{secTrans}

The generalized translation defined in formula (\ref{exprtrans}) no longer equals geometric translation. However, in the special case when we compute the translation of a radial function and when $\a = \pi/2$, we still find that generalized translation equals geometric translation. That is the main result we will obtain in this section. As a consequence, we will be able to give more results on the new convolution products when one of the functions is radial.

The key ingredient we need is a compact formula for the integral over the unit sphere
\begin{displaymath}
\int_{\mS^{m-1}} \widetilde{K(r \eta, x)} \ K(y, r \eta) \ d \omega (\eta)
\end{displaymath}
where $\int_{\mS^{m-1}} d \omega (\eta) = 1$. This is derived using the series representation of the kernel function and some lemmas which are proven in \cite{DBXu}. We state them again for the convenience of the reader.

\begin{lemma}\label{lemma1}
For $k,\ell \in \mathbb{N}$, $\l =(m-2)/2$ and $x' , y' \in \mS^{m-1}$, one has
\begin{align*}
&\int_{\mS^{m-1}} (\uy' \wedge \underline{\eta}) \ C_k^{\l + 1}( \langle \eta,y' \rangle) \ C_{\ell}^{\l}( \langle \eta,x' \rangle) \ d\omega(\eta)\\
& = - \frac{\l}{k+1+\l} \delta_{\ell-1,k} (\ux' \wedge \uy') \ C_k^{\l+1}( \langle x' , y' \rangle).
\end{align*}
\end{lemma}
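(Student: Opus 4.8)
The plan is to compute the left-hand integral by expanding the wedge products into bivector components and reducing everything to known integrals of products of Gegenbauer polynomials over the sphere. First I would write $\uy' \wedge \underline{\eta} = \sum_{a<b} e_a e_b (y'_a \eta_b - y'_b \eta_a)$ and similarly keep $\ux' \wedge \uy'$ in component form, so that the claimed identity becomes, after matching bivector coefficients $e_a e_b$, a family of scalar identities involving $\int_{\mS^{m-1}} \eta_b\, C_k^{\l+1}(\la \eta, y'\ra)\, C_\ell^{\l}(\la \eta, x'\ra)\, d\omega(\eta)$ (and the same with $b$ replaced by $a$). The factor of $\eta_b$ is the crux: the natural move is to use the contiguous relation expressing $\eta_b\, C_k^{\l+1}(\la\eta,y'\ra)$ in terms of $\partial_{y'_b}$ acting on $C_{k+1}^{\l}(\la\eta,y'\ra)$, since $\frac{d}{dt}C_{k+1}^{\l}(t) = 2\l\, C_k^{\l+1}(t)$ and hence $\partial_{y'_b} C_{k+1}^{\l}(\la\eta,y'\ra) = 2\l\,\eta_b\, C_k^{\l+1}(\la\eta,y'\ra)$. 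This replaces the troublesome linear factor by a derivative that can be pulled outside the integral.

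Next I would invoke the classical Funk–Hecke / Gegenbauer orthogonality formula on the sphere: $\int_{\mS^{m-1}} C_{k+1}^{\l}(\la\eta,y'\ra)\, C_\ell^{\l}(\la\eta,x'\ra)\, d\omega(\eta)$ is a constant times $\delta_{\ell,k+1}\, C_{k+1}^{\l}(\la x',y'\ra)$ (this is precisely the kind of lemma attributed to \cite{DBXu} and used elsewhere in the excerpt). Applying $\frac{1}{2\l}\partial_{y'_b}$ to this identity, using the chain rule $\partial_{y'_b} C_{k+1}^{\l}(\la x',y'\ra) = x'_b\, (C_{k+1}^{\l})'(\la x',y'\ra) = 2\l\, x'_b\, C_k^{\l+1}(\la x',y'\ra)$, produces exactly a term proportional to $x'_b\, \delta_{\ell,k+1}\, C_k^{\l+1}(\la x',y'\ra)$. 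Doing the same with $y'_a\eta_b$ replaced by $y'_b\eta_a$ gives the companion term with $x'_a$, and assembling $\sum_{a<b} e_a e_b (x'_a y'_b - x'_b y'_a) = \ux' \wedge \uy'$ recovers the wedge on the right-hand side. Careful bookkeeping of the normalization constant (from the precise form of the orthogonality relation on the sphere, with $\int d\omega = 1$) should yield the coefficient $-\frac{\l}{k+1+\l}$; I would double-check the sign, which comes from the antisymmetry of the wedge together with the order of the arguments in $\uy'\wedge\underline{\eta}$ versus $\ux'\wedge\uy'$.

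One subtlety to handle is that differentiation in $y'_b$ must be carried out before restricting $y'$ to the sphere, since on the sphere the components of $y'$ are constrained; the clean way is to treat $C_{k+1}^\l(\la\eta,y\ra)$ as a polynomial in an unrestricted vector $y\in\mR^m$, differentiate freely, and only afterwards set $|y|=1$. A second subtlety is that the derivative $\partial_{y_b}$ also hits any $|y|$-dependent normalization if one is not careful, but since $C_{k+1}^\l(\la\eta,y\ra)$ as written has no such factor, this does not arise. I would also note that the $\delta_{\ell-1,k}$ in the statement is the same as $\delta_{\ell,k+1}$, so the degree selection is automatic from orthogonality: the integrand $C_{k+1}^\l(\la\eta,y'\ra)C_\ell^\l(\la\eta,x'\ra)$ integrates to zero unless $\ell = k+1$.

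The main obstacle I anticipate is not the structure of the argument but getting the constant and sign exactly right: one must track the normalization of the Gegenbauer orthogonality relation on the unit sphere (with the measure normalized to total mass $1$), the factor $2\l$ coming out of each application of $\frac{d}{dt}C_{n+1}^\l = 2\l\, C_n^{\l+1}$, and the combinatorial sign from reordering the bivectors. I would cross-check the final coefficient $-\frac{\l}{k+1+\l}$ against a low-dimensional or low-degree special case (e.g.\ $k=0$, where $C_0^{\l+1}=1$ and $C_1^\l(t)=2\l t$) to make sure the bookkeeping is consistent, and then record the computation compactly rather than displaying every intermediate sum over $a<b$.
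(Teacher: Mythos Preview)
The paper does not actually prove this lemma; it merely quotes it from \cite{DBXu} for the reader's convenience, so there is no in-paper argument to compare against.

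Your strategy is sound and close to a complete proof, but there is one genuine gap you have not fully closed. You correctly note that one must regard $C_{k+1}^{\l}(\la\eta,y\ra)$ as a polynomial in an unrestricted $y\in\mR^m$, differentiate, and then set $|y|=1$. However, the Funk--Hecke/reproducing identity you intend to differentiate,
\[
\int_{\mS^{m-1}} C_{k+1}^{\l}(\la\eta,y\ra)\,C_\ell^{\l}(\la\eta,x'\ra)\,d\omega(\eta)=\frac{\l}{k+1+\l}\,\delta_{\ell,k+1}\,C_{k+1}^{\l}(\la x',y\ra),
\]
holds only for $|y|=1$; for general $y$ the two sides differ by a polynomial multiple of $|y|^2-1$ (check e.g.\ $k+1=2$, $\ell=0$). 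Consequently a naive application of $\partial_{y_b}$ to both sides produces an extra term proportional to $y'_b$ that your write-up does not account for. The reason the final answer is nevertheless correct is that this spurious term cancels in the antisymmetric combination $y'_a\partial_{y_b}-y'_b\partial_{y_a}$: this vector field is tangential to the sphere $|y|=1$, so it can legitimately be applied to an identity valid only there. If you recast the argument so that you apply the rotational operator $y_a\partial_{y_b}-y_b\partial_{y_a}$ directly to the on-sphere orthogonality relation (rather than $\partial_{y_b}$ alone), the proof goes through cleanly and the constant $-\l/(k+1+\l)$ and the sign fall out exactly as you anticipated.
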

\begin{lemma}\label{lemma2}
For $k,\ell \in \mathbb{N}$, $\l =(m-2)/2$ and $x' , y' \in \mS^{m-1}$, one has
\begin{align*}
& \int_{\mS^{m-1}} (\underline{\eta} \wedge \ux') \ C_k^{\l + 1}( \langle\eta, x' \rangle) \ (\uy' \wedge \underline{\eta}) C_{\ell}^{\l+1}( \langle \eta,y' \rangle) \ d\omega(\eta)\\
 &= \delta_{k,\ell} \frac{(k+1)(k+1+2\l)}{4 \l (k+\l+1)} \ C_{k+1}^{\l}( \langle x' , y' \rangle) \\
 &\quad- \delta_{k,\ell} \frac{\l}{k+\l+1} (\ux' \wedge \uy') \ C_k^{\l+1}(\langle x' , y' \rangle).
\end{align*}
\end{lemma}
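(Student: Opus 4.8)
The plan is to reduce the integral
\[
I:=\int_{\mS^{m-1}}(\underline{\eta}\wedge\ux')\,C_k^{\l+1}(\langle\eta,x'\rangle)\,(\uy'\wedge\underline{\eta})\,C_\ell^{\l+1}(\langle\eta,y'\rangle)\,d\omega(\eta)
\]
to scalar integrals over $\mS^{m-1}$ of products of Gegenbauer polynomials, together with a single use of Lemma \ref{lemma1}. For the factor $\delta_{k,\ell}$, note first that, using the derivative formula $\tfrac{d}{dt}C_n^{\l}(t)=2\l\,C_{n-1}^{\l+1}(t)$ and the definition of $\Gamma$, one gets (with $\Gamma$ now acting in the $\eta$ variable and $x'$ fixed)
\[
(\underline{\eta}\wedge\ux')\,C_k^{\l+1}(\langle\eta,x'\rangle)=-\tfrac{1}{2\l}\,\Gamma\!\left[C_{k+1}^{\l}(\langle\eta,x'\rangle)\right].
\]
Since $\Gamma$ commutes with the spherical Laplacian it preserves the space of $\cC l_{0,m}$-valued spherical harmonics of each fixed degree; as $C_{k+1}^{\l}(\langle\cdot,x'\rangle)$ is a spherical harmonic of degree $k+1$, every component of the first factor is a spherical harmonic of degree $k+1$ in $\eta$, and likewise every component of the second factor is a spherical harmonic of degree $\ell+1$. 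Orthogonality of spherical harmonics of distinct degrees in $L_2(\mS^{m-1})$ then forces $I=0$ whenever $k\neq\ell$.

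For the diagonal case $k=\ell$, I would expand the product of the two bivectors, which share the vector $\underline{\eta}$, into scalar and bivector parts. Using $\underline a\wedge\underline b=\underline a\,\underline b+\langle a,b\rangle$, $\underline{\eta}^{2}=-1$, and the reflection identity $\underline{\eta}\,(\ux'\wedge\uy')\,\underline{\eta}=2\langle\eta,x'\rangle(\underline{\eta}\wedge\uy')-2\langle\eta,y'\rangle(\underline{\eta}\wedge\ux')-\ux'\wedge\uy'$, one obtains
\[
(\underline{\eta}\wedge\ux')(\uy'\wedge\underline{\eta})=\langle x',y'\rangle-\langle\eta,x'\rangle\langle\eta,y'\rangle-\ux'\wedge\uy'+\langle\eta,x'\rangle(\underline{\eta}\wedge\uy')-\langle\eta,y'\rangle(\underline{\eta}\wedge\ux').
\]
Multiplying by $C_k^{\l+1}(\langle\eta,x'\rangle)C_k^{\l+1}(\langle\eta,y'\rangle)$ and integrating splits $I$ into a purely scalar contribution and a bivector contribution. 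The scalar contribution $\langle x',y'\rangle\!\int C_k^{\l+1}C_k^{\l+1}\,d\omega-\int\langle\eta,x'\rangle\langle\eta,y'\rangle C_k^{\l+1}C_k^{\l+1}\,d\omega$ is evaluated by expanding each $C_n^{\l+1}(\langle\eta,\cdot\rangle)$ into genuine zonal harmonics via $C_n^{\l+1}-C_{n-2}^{\l+1}=\tfrac{n+\l}{\l}C_n^{\l}$, absorbing the extra factors $\langle\eta,\cdot\rangle$ with the three-term recurrence $2(n+\l+1)\,t\,C_n^{\l+1}(t)=(n+1)C_{n+1}^{\l+1}(t)+(n+2\l+1)C_{n-1}^{\l+1}(t)$, and then using the reproducing identity $\int_{\mS^{m-1}}C_a^{\l}(\langle\eta,x'\rangle)C_b^{\l}(\langle\eta,y'\rangle)\,d\omega(\eta)=\delta_{a,b}\,\tfrac{\l}{a+\l}\,C_a^{\l}(\langle x',y'\rangle)$ (Funk--Hecke, i.e. composition of reproducing kernels of the degree-$a$ harmonics). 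A short manipulation with the same recurrences collapses this to $\tfrac{(k+1)(k+1+2\l)}{4\l(k+\l+1)}\,C_{k+1}^{\l}(\langle x',y'\rangle)$, the scalar term claimed.

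For the bivector contribution, the term $-(\ux'\wedge\uy')\!\int C_k^{\l+1}C_k^{\l+1}\,d\omega$ equals $-(\ux'\wedge\uy')\,C_k^{\l+1}(\langle x',y'\rangle)$ by the same reproducing identity. In $\int\langle\eta,x'\rangle(\underline{\eta}\wedge\uy')\,C_k^{\l+1}(\langle\eta,x'\rangle)C_k^{\l+1}(\langle\eta,y'\rangle)\,d\omega$ I would rewrite $\langle\eta,x'\rangle C_k^{\l+1}(\langle\eta,x'\rangle)$ by the three-term recurrence; since $(\underline{\eta}\wedge\uy')C_k^{\l+1}(\langle\eta,y'\rangle)$ is (by the first paragraph) a degree-$(k+1)$ harmonic, only the degree-$(k+1)$ piece of $C_{k+1}^{\l+1}(\langle\eta,x'\rangle)$ survives the integration — so $C_{k+1}^{\l+1}(\langle\eta,x'\rangle)$ may be replaced by $\tfrac{k+1+\l}{\l}C_{k+1}^{\l}(\langle\eta,x'\rangle)$ and the $C_{k-1}^{\l+1}$ piece dropped entirely — and the integral reduces to $\int(\uy'\wedge\underline{\eta})C_k^{\l+1}(\langle\eta,y'\rangle)C_{k+1}^{\l}(\langle\eta,x'\rangle)\,d\omega$, which is exactly Lemma \ref{lemma1} with $\ell=k+1$. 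The $x'\leftrightarrow y'$ mirror integral is handled identically, and summing the three bivector terms yields $-\tfrac{\l}{k+\l+1}(\ux'\wedge\uy')\,C_k^{\l+1}(\langle x',y'\rangle)$. Together with the first paragraph this is the asserted identity.

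The conceptual scaffolding here is light; the real work is the bookkeeping, above all correctly rewriting the ``wrong-index'' Gegenbauer polynomials $C_n^{\l+1}(\langle\eta,\cdot\rangle)$ in terms of the honest zonal harmonics $C_a^{\l}(\langle\eta,\cdot\rangle)$, and tracking constants through the several Gegenbauer recurrences so as to land precisely on the coefficients $\tfrac{(k+1)(k+1+2\l)}{4\l(k+\l+1)}$ and $\tfrac{\l}{k+\l+1}$. (Since this statement is quoted from \cite{DBXu}, one may of course also simply refer to that reference.)
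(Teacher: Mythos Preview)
The paper does not give its own proof of this lemma: it is quoted verbatim from \cite{DBXu}, as the surrounding text (and your own closing remark) already indicates. So there is no ``paper's proof'' to compare against beyond the reference itself.

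Your argument is sound as an independent proof. The decomposition
\[
(\underline{\eta}\wedge\ux')(\uy'\wedge\underline{\eta})=\langle x',y'\rangle-\langle\eta,x'\rangle\langle\eta,y'\rangle-\ux'\wedge\uy'+\langle\eta,x'\rangle(\underline{\eta}\wedge\uy')-\langle\eta,y'\rangle(\underline{\eta}\wedge\ux')
\]
is correct, the observation that each component of $(\underline{\eta}\wedge\ux')C_k^{\l+1}(\langle\eta,x'\rangle)$ is a spherical harmonic of degree $k+1$ in $\eta$ cleanly dispatches the off-diagonal case, and the reduction of the two ``mixed'' bivector integrals to Lemma~\ref{lemma1} via the three-term recurrence works exactly as you describe. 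One small remark: when you invoke ``the same reproducing identity'' to get $\int_{\mS^{m-1}}C_k^{\l+1}(\langle\eta,x'\rangle)C_k^{\l+1}(\langle\eta,y'\rangle)\,d\omega=C_k^{\l+1}(\langle x',y'\rangle)$, this is not literally the Funk--Hecke formula for parameter $\l$ (since $C_k^{\l+1}(\langle\eta,\cdot\rangle)$ is not a single zonal harmonic on $\mS^{m-1}$), but it does follow from your expansion $C_k^{\l+1}=\sum_{j\le k,\ j\equiv k}\tfrac{j+\l}{\l}C_j^{\l}$ together with the genuine reproducing identity applied term by term; the extra factors $(j+\l)/\l$ cancel perfectly. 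It would be worth making that one-line justification explicit, since otherwise the step looks circular. With that addition, the proof goes through and matches the coefficients claimed.
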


We can now show the following result.
\begin{theorem}\label{key}
For $x, y \in \mathbb{R}^m$ and $r \in \mathbb{R}_+$, one has
\begin{align*}
&\int_{\mS^{m-1}} \widetilde{K(r \eta, x)} \ K(y, r \eta) \ d \omega (\eta) \\
&= 2^{\l} \Gamma(\l+1) u^{-\l} J_{\l}(u) \ e^{-\frac{i}{2} (\cot \alpha)(|x|^2-|y|^2)} 
\end{align*}
with $u = \frac{r}{\sin{\alpha}} \sqrt{|x|^2+|y|^2-2 \langle x, y \rangle} =  \frac{r}{\sin{\alpha}} |x - y|$ and $\lambda=(m-2)/2$.
\end{theorem}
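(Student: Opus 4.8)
The plan is to substitute the series representations of $\widetilde{K}$ and $K$ into the spherical integral, interchange the sphere integral with the (locally uniformly convergent) sums, and reduce everything to the two integrals computed in Lemma \ref{lemma1} and Lemma \ref{lemma2} plus the elementary orthogonality
\[
\int_{\mS^{m-1}} C_k^{\l}(\la \eta, x'\ra) \, C_\ell^{\l}(\la \eta, y'\ra) \, d\omega(\eta) = \delta_{k,\ell}\, \frac{\l}{k+\l} C_k^{\l}(\la x', y'\ra)
\]
(the scalar case, which follows from the addition formula for Gegenbauer polynomials). Writing $x' = \ux/|x|$, $y' = \uy/|y|$, $\eta \in \mS^{m-1}$, and setting $\widetilde z_1 = r|x|/\sin\alpha$, $\widetilde z_2 = r|y|/\sin\alpha$, the product $\widetilde{K(r\eta,x)}K(y,r\eta)$ is (after pulling out the Gaussian factor $e^{-\frac{i}{2}(\cot\alpha)(|x|^2-|y|^2)}$, noting the exponents in $\widetilde K$ and $K$ are $-\frac i2\cot\alpha(r^2+|x|^2)$ and $+\frac i2\cot\alpha(|y|^2+r^2)$ respectively, so the $r^2$ terms cancel) a sum of four types of terms: $\widetilde A\cdot A$, $\widetilde A\cdot (\uy\wedge r\underline\eta)B$, $(r\underline\eta\wedge\ux)\widetilde B\cdot A$, and $(r\underline\eta\wedge\ux)\widetilde B\cdot(\uy\wedge r\underline\eta)B$. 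By Lemma \ref{lemma1} the two middle (mixed) families reduce, after integration over $\mS^{m-1}$, to terms proportional to $\ux'\wedge\uy'$; by Lemma \ref{lemma2} the last family reduces to a scalar $C_{k+1}^{\l}$ part plus a further $\ux'\wedge\uy'$ part; and the scalar orthogonality handles the $\widetilde A\cdot A$ family.

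The next step is the bookkeeping that makes the $\ux'\wedge\uy'$ contributions cancel and the scalar contributions collapse to $2^{\l}\Gamma(\l+1)u^{-\l}J_\l(u)$. Here one inserts the explicit coefficients: $A$ has $\alpha_k$ attached to $(\widetilde z)^{-\l}J_{k+\l}(\widetilde z)C_k^\l(w)$ and $B$ has $\beta_k$ attached to $(\widetilde z)^{-\l-1}J_{k+\l}(\widetilde z)C_{k-1}^{\l+1}(w)$, while from Theorem \ref{inverse} the inverse kernel has $\widetilde A$ carrying $N_k^{-\l\,?}$ — precisely $\frac{1}{N_k^\l}(\alpha_k+\beta_k\sin\alpha)$ — and $\widetilde B$ carrying $-\frac{1}{N_k^\l}\beta_k$. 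The key algebraic fact to exploit is the exact definition of $N_k^\l$ as the product of the two "eigenvalue-type" factors $\bigl(\tfrac{\l}{\l+k}\alpha_k - \sin\alpha\tfrac{k}{2(\l+k)}\beta_k\bigr)$ and $\bigl(\tfrac{\l}{\l+k}\alpha_k + \sin\alpha\tfrac{k+2\l}{2(\l+k)}\beta_k\bigr)$ (up to the constant $2^{-2\l}\Gamma(\l+1)^{-2}$): the coefficient combinations produced by Lemmas \ref{lemma1}--\ref{lemma2} are designed so that the wedge terms appear with opposite signs and the numerators match the two factors of $N_k^\l$, so that dividing by $N_k^\l$ leaves a clean telescoping. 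I would also use the contiguous relation between $C_{k+1}^\l$ and $C_k^{\l+1}$ to merge the scalar contributions coming from the $\widetilde A A$ family and from the scalar part of the $\widetilde B B$ family into a single sum $\sum_k c_k\, C_k^\l(\la x',y'\ra)$ with $c_k$ independent of the $\alpha_k,\beta_k$.

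Finally, having reduced to a scalar series $e^{-\frac i2\cot\alpha(|x|^2-|y|^2)}\sum_{k\ge 0} c_k (\widetilde z_1\widetilde z_2)^{\text{power}} J_{k+\l}(\widetilde z_1)J_{k+\l}(\widetilde z_2) C_k^\l(w)$ with $w = \la x',y'\ra$, I would identify it via the classical Gegenbauer addition theorem for Bessel functions (the "graf/Gegenbauer" formula: $\sum_{k\ge 0}(k+\l)\,\varpi^{-\l}J_{k+\l}(a)J_{k+\l}(b)C_k^\l(\cos\gamma) = $ const $\cdot\, \varpi^{-\l} J_\l(\varpi)$ with $\varpi = \sqrt{a^2+b^2-2ab\cos\gamma}$), which here gives exactly $\varpi = \widetilde z_1^2+\widetilde z_2^2 - 2\widetilde z_1\widetilde z_2 w$ under the square root, i.e. $\varpi = \frac{r}{\sin\alpha}\sqrt{|x|^2+|y|^2-2\la x,y\ra} = \frac{r}{\sin\alpha}|x-y| = u$, together with the normalizing constant $2^\l\Gamma(\l+1)$. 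The main obstacle I anticipate is not any single step but the sign-and-coefficient bookkeeping in the middle paragraph: one must verify that the two $\ux'\wedge\uy'$-valued families (the mixed ones from Lemma \ref{lemma1} and the wedge part from Lemma \ref{lemma2}) cancel exactly, which forces a careful treatment of index shifts ($k$ versus $k-1$ in $B$, and $\beta_0 = 0$) and of the factor $\frac{1}{N_k^\l}$; getting the Bessel addition formula to apply then requires recognizing that the surviving scalar coefficients $c_k$ are precisely $(k+\l)$ up to the global constant, which is where the definition of $N_k^\l$ does its work.
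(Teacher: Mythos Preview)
Your proposal is correct and follows essentially the same route as the paper's proof: decompose the integrand into the four pieces $\widetilde A A$, $\widetilde A(\uy'\wedge\underline\eta)B$, $(\underline\eta\wedge\ux')\widetilde B A$, $(\underline\eta\wedge\ux')\widetilde B(\uy'\wedge\underline\eta)B$, apply the scalar reproducing property and Lemmas~\ref{lemma1}--\ref{lemma2}, check algebraically that the three $(\ux'\wedge\uy')$ contributions cancel and that the surviving scalar coefficient is $2^{2\l}\Gamma(\l+1)^2\tfrac{1}{\l}(\l+k)$, and then invoke the Bessel--Gegenbauer addition formula. One minor remark: no contiguous relation between $C_{k+1}^{\l}$ and $C_k^{\l+1}$ is needed---Lemma~\ref{lemma2} already outputs $C_{k+1}^{\l}$, and since $B$ and $\widetilde B$ carry $C_{k-1}^{\l+1}$, a simple index shift aligns the scalar part of the fourth piece with the first.
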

\begin{proof}
First we slightly rewrite $K(x,y)$ (see Section \ref{gentran}) and $\widetilde{K(x,y)}$ (see Theorem \ref{inverse}) as
\begin{align*}
K(x,y) &= \left( F_{\l}(w, \widetilde{z}) + (\ux' \wedge \uy') G_{\l}(w, \widetilde{z})  \right)\\
& \quad \times e^{\frac{i}{2}( \cot \alpha) (|x|^2 + |y|^2)}\\
\widetilde{K(x,y)} &= \left( S_{\l}(w, \widetilde{z}) + (\ux' \wedge \uy')  T_{\l}(w, \widetilde{z})  \right)\\
& \quad \times  e^{-\frac{i}{2}( \cot \alpha) (|x|^2 + |y|^2)} 
\end{align*}
with
\begin{align*}
F_{\l}(w, \widetilde{z})  &= \sum_{k=0}^{+\infty} \alpha_{k} \ (\widetilde{z})^{-\l}J_{k+\l}(\widetilde{z}) C^{\l}_{k}(w)\\
G_{\l}(w, \widetilde{z})  &= \sin{\alpha} \sum_{k=1}^{+\infty} \beta_{k} \ (\widetilde{z})^{-\l}J_{k+\l}(\widetilde{z}) C^{\l+1}_{k-1}(w)\\
S_{\l}(w, \widetilde{z})  &= \sum_{k=0}^{+\infty} s_{k} \ (\widetilde{z})^{-\l}J_{k+\l}(\widetilde{z}) C^{\l}_{k}(w)\\
T_{\l}(w, \widetilde{z})  &= \sin{\alpha} \sum_{k=1}^{+\infty} t_{k} \ (\widetilde{z})^{-\l}J_{k+\l}(\widetilde{z}) C^{\l+1}_{k-1}(w)
\end{align*}
where $s_k = (\beta_k \ \sin{\alpha}   + \alpha_k)/N_k^{\lambda}$, $t_k = -\beta_k/N_k^{\lambda}$,
\begin{align*}
&N_k^{\lambda} = \frac{1}{2^{2\l} (\Gamma (\l +1))^2} \ \left(\frac{\l}{\l+k} \alpha_{k} - \sin{\alpha} \ \frac{k}{2(\l+k)} \beta_{k} \right) \\
& \quad \times \left(\frac{\l}{\l+k} \alpha_{k} + \sin{\alpha} \frac{k+2\l}{2(\l+k)} \beta_{k} \right)
\end{align*}
and $x' = x/|x|, y' =y/|y|, w = \langle x' , y' \rangle, \widetilde{z} = (|x||y|) /\sin{\alpha}$, $\lambda = (m-2)/2$.

Using these decompositions, we obtain
\begin{align*}
&\int_{\mS^{m-1}} \widetilde{K(r \eta, x)} \ K(y, r \eta) \ d \omega (\eta) \\
&= e^{-\frac{i}{2} (\cot \alpha)(|x|^2-|y|^2)} (I_1 + I_2 + I_3 + I_4), 
\end{align*}
where we calculate the 4 pieces $I_1,I_2,I_3$ and $I_4$ separately.

For $I_1$, we use the reproducing property of the spherical harmonics to obtain
\begin{align*}
I_1 &= \int_{\mS^{m-1}} S_{\l}(w_1,\widetilde{z}_1) \ F_{\l}(w_2,\widetilde{z}_2)  \ d \omega (\eta)\\
&= \sum_{k=0}^{\infty} \frac{\l}{\l+k} s_k \alpha_k (\widetilde{z}_1 \widetilde{z}_2)^{-\l}  J_{k+\l}(\widetilde{z}_1)  J_{k+\l}(\widetilde{z}_2)  C_k^{\l}(\langle x',y'  \rangle),
\end{align*}
where we use the notations $\widetilde{z}_1 = (r |x|)/\sin{\alpha}$, $\widetilde{z}_2 = (r |y|)/\sin{\alpha}$, $w_1 =\langle \eta, x' \rangle$ and $w_2 =\langle y' ,\eta \rangle$. For $I_2$, we use Lemma \ref{lemma1} yielding 
\begin{align*}
I_2 &= \int_{\mS^{m-1}} (\underline{\eta} \wedge \ux') \ T_{\l}(w_1,\widetilde{z}_1) \ F_{\l}(w_2,\widetilde{z}_2)  \ d \omega (\eta)\\
&= - \sin{\alpha} \ (\ux' \wedge \uy') \  \sum_{k=1}^{\infty} \frac{\l}{\l+k} t_k \alpha_k (\widetilde{z}_1 \widetilde{z}_2)^{-\l}\\
& \quad \times J_{k+\l}(\widetilde{z}_1)  J_{k+\l}(\widetilde{z}_2) C_{k-1}^{\l+1}(\langle x',y'  \rangle),
\end{align*}
and similarly for $I_3$
\begin{align*}
I_3 &= \int_{\mS^{m-1}}  S_{\l}(w_1,\widetilde{z}_1) \ (\uy' \wedge \underline{\eta}) \ G_{\l}(w_2,\widetilde{z}_2)  \ d \omega (\eta)\\
&= - \sin{\alpha} \ (\ux' \wedge \uy') \  \sum_{k=1}^{\infty} \frac{\l}{\l+k} s_k \beta_k (\widetilde{z}_1 \widetilde{z}_2)^{-\l}  \\
& \quad \times J_{k+\l}(\widetilde{z}_1) J_{k+\l}(\widetilde{z}_2)  C_{k-1}^{\l+1}(\langle x',y'  \rangle).
\end{align*}
Finally, we can calculate the term $I_4$ using Lemma \ref{lemma2} as follows
\begin{align*}
I_4 &= \int_{\mS^{m-1}}  (\underline{\eta} \wedge \ux') \ T_{\l}(w_1,\widetilde{z}_1) \ (\uy' \wedge \underline{\eta}) \ G_{\l}(w_2,\widetilde{z}_2)  \ d \omega (\eta)\\
&= (\sin{\alpha})^2 \sum_{k=1}^{\infty} \frac{k(k+2\l)}{4\l(k+\l)} t_k \beta_k (\widetilde{z}_1 \widetilde{z}_2)^{-\l} \\
& \qquad \times J_{k+\l}(\widetilde{z}_1)  J_{k+\l}(\widetilde{z}_2) C_{k}^{\l}(\langle x',y'  \rangle)\\
&-  (\sin{\alpha})^2 (\ux' \wedge \uy')   \sum_{k=1}^{\infty} \frac{\l}{\l+k} t_k \beta_k (\widetilde{z}_1 \widetilde{z}_2)^{-\l} \\
& \qquad \times J_{k+\l}(\widetilde{z}_1)  J_{k+\l}(\widetilde{z}_2)  C_{k-1}^{\l+1}(\langle x',y'  \rangle).
\end{align*}
Adding these 4 terms then gives
\begin{align*}
& I_1+I_2+I_3+I_4\\
&= \sum_{k=0}^{\infty} \left( \frac{\l}{\l+k} s_k \alpha_k + (\sin{\alpha})^2 \frac{k(k+2\l)}{4\l (k+\l)} t_k \beta_k \right)  (\widetilde{z}_1 \widetilde{z}_2)^{-\l}  \\
& \qquad \times J_{k+\l}(\widetilde{z}_1)  J_{k+\l}(\widetilde{z}_2) C_{k}^{\l}(\langle x',y'  \rangle)\\
& - (\ux' \wedge \uy') \sin{\alpha} \sum_{k=1}^{\infty} \frac{\l}{\l+k} (t_k \alpha_k + s_k \beta_k + \sin{\alpha} \ t_k \beta_k) \\
& \qquad \times (\widetilde{z}_1 \widetilde{z}_2)^{-\l} J_{k+\l}(\widetilde{z}_1)  J_{k+\l}(\widetilde{z}_2) C_{k-1}^{\l+1}(\langle x',y'  \rangle).
\end{align*}
It is easy to check for all $k$ that $ t_k \alpha_k + s_k \beta_k + \sin{\alpha} \ t_k \beta_k = 0$, so the term in $(\ux' \wedge \uy')$ vanishes. Similarly we can compute that
\begin{align*}
&\frac{\l}{\l+k} s_k \alpha_k + (\sin{\alpha})^2 \frac{k(k+2\l)}{4\l (k+\l)} t_k \beta_k\\
& = 2^{2\l} (\Gamma(\l+1))^2 \frac{1}{\l} (\l+k),
\end{align*}
hence we conclude that
\begin{align*}
&I_1+I_2+I_3+I_4\\
& = 2^{2\l} (\Gamma(\l+1))^2 \frac{1}{\l} \sum_{k=0}^{\infty} (\l+k) (\widetilde{z}_1 \widetilde{z}_2)^{-\l} \\
& \quad \times J_{k+\l}(\widetilde{z}_1) J_{k+\l}(\widetilde{z}_2)  C_{k}^{\l}(\langle x',y'  \rangle).
\end{align*}
Now we invoke the addition formula for Bessel functions (see e.g. \cite{Magnus}, p. 107) yielding
\begin{align*}
u^{-\l} J_{\l}(u) &= 2^{\l} \Gamma(\l) \sum_{k=0}^{\infty} (\l+k) (\widetilde{z}_1 \widetilde{z}_2)^{-\l} \\
& \quad \times J_{k+\l}(\widetilde{z}_1) J_{k+\l}(\widetilde{z}_2)  C_{k}^{\l}(\langle x',y'  \rangle)
\end{align*}
with $u = \frac{r}{\sin{\alpha}} \sqrt{|x|^2+|y|^2 - 2 \langle x, y \rangle}$. This completes the proof of the theorem.
 \end{proof}

We can now prove the main theorem in this section.
\begin{theorem}\label{transradial}
Let $f \in \cS(\mR^{m})$ be a real-valued radial function on $\mathbb{R}^m$, i.e. $f(x) = f_0(|x|)$ with $f_0 : \mathbb{R}_+ \longmapsto \mathbb{R}$, then
\begin{align*}
\tau_{y}^K f(x) &= \frac{2 \alpha_0}{\Gamma(\l + 1)} (1-e^{2i\alpha})^{-m/2} e^{-\frac{i}{2} (\cot \alpha)(|x|^2-|y|^2)} \\
& \quad \times H_{\lambda} \left\lbrack \cF_{\alpha} \left( f \right) \right\rbrack \left( \frac{|x - y|}{\sin{\alpha}} \right) 
\end{align*}
with $\l = (m-2)/2$, $\cF_{\alpha}$ the fractional version of the classical Fourier transform given by the integral transform
\begin{align*}
\cF_{\alpha}\left( f \right) (y) = \rho_{\alpha,m} \int_{\mathbb{R}^m} e^{-\frac{i \left< x, y \right>}{\sin{\alpha}}} e^{\frac{i}{2} \cot{\alpha}(|x|^2+|y|^2)} f(x) dx
\end{align*}
and $H_{\lambda}$ the Hankel transform defined by
\begin{displaymath}
H_{\lambda}f(s) := \int_{0}^{\infty} f(r) \ \frac{J_{\lambda}(rs)}{(rs)^{\lambda}} r^{2\lambda +1} dr.
\end{displaymath}
\end{theorem}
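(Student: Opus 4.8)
The plan is to reduce the whole computation to Theorem \ref{key} (the closed form of the spherical mean of $\widetilde{K}\,K$) and Proposition \ref{radbehavior1} (the action of $\cF_K$ on radial functions), so that no new hard analysis is required. First I would record what $\cF_K$ does to a radial scalar function. Since $f=f_0(|\cdot|)$ is scalar and $1\in\cM_0$, Proposition \ref{radbehavior1} with $k=0$, $M_0\equiv 1$ and $\beta_0=0$ shows that $\cF_K(f)$ is again radial, namely
\[
\cF_K(f)(\xi)=c_m\,\alpha_0\,e^{\frac{i}{2}(\cot\alpha)|\xi|^2}\int_0^{+\infty}\rho^{m-1}f_0(\rho)\Big(\tfrac{\rho|\xi|}{\sin\alpha}\Big)^{-\l}J_{\l}\!\Big(\tfrac{\rho|\xi|}{\sin\alpha}\Big)e^{\frac{i}{2}(\cot\alpha)\rho^2}\,d\rho .
\]
Computing $\cF_{\alpha}(f)$ on the same radial $f$ and invoking the classical formula expressing the $m$-dimensional Fourier transform of a radial function as a Hankel transform of order $\l=(m-2)/2$, one gets exactly the same integral with the prefactor $\rho_{\alpha,m}(2\pi)^{m/2}$ in place of $c_m\alpha_0$; hence, as radial functions, $\cF_K(f)=\dfrac{c_m\alpha_0}{\rho_{\alpha,m}(2\pi)^{m/2}}\,\cF_{\alpha}(f)$.

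Next I would insert this into the integral expression (\ref{exprtrans}) for $\tau^K_y f$ and pass to polar coordinates $\xi=r\eta$, $r=|\xi|$, $\eta\in\mS^{m-1}$. Because $d\omega$ is normalised to total mass $1$, a factor $|\mS^{m-1}|=2\pi^{m/2}/\Gamma(\l+1)$ appears; and since $\cF_K(f)(r\eta)$ does not depend on $\eta$ (Step 1), it can be pulled out of the inner integral, giving
\[
\tau^K_y f(x)=\rho_{-\alpha,m}\,|\mS^{m-1}|\int_0^{+\infty}r^{m-1}\,\cF_K(f)(r\eta)\left[\int_{\mS^{m-1}}\widetilde{K(r\eta,x)}\,K(y,r\eta)\,d\omega(\eta)\right]dr .
\]
By Theorem \ref{key} the bracket equals $2^{\l}\Gamma(\l+1)\,u^{-\l}J_{\l}(u)\,e^{-\frac{i}{2}(\cot\alpha)(|x|^2-|y|^2)}$ with $u=r|x-y|/\sin\alpha$, which I substitute in.

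Finally I would recognise the Hankel transform and collect constants. Writing $s=|x-y|/\sin\alpha$ (so $u=rs$) and using $m-1=2\l+1$, the leftover radial integral is
\[
\int_0^{+\infty}r^{2\l+1}\,\cF_K(f)(r)\,\frac{J_{\l}(rs)}{(rs)^{\l}}\,dr=\frac{c_m\alpha_0}{\rho_{\alpha,m}(2\pi)^{m/2}}\,H_{\lambda}\big[\cF_{\alpha}(f)\big]\!\Big(\tfrac{|x-y|}{\sin\alpha}\Big),
\]
by Step 1 and the definition of $H_{\lambda}$. It then remains to simplify the accumulated prefactor $\rho_{-\alpha,m}\,|\mS^{m-1}|\,2^{\l}\,\Gamma(\l+1)\,\dfrac{c_m\alpha_0}{\rho_{\alpha,m}(2\pi)^{m/2}}$; using $\l+1=m/2$, the identity $(1-e^{2i\alpha})(1-e^{-2i\alpha})=4\sin^2\alpha$, and the explicit values of $\rho_{\pm\alpha,m}$ and $c_m$, this collapses to $\dfrac{2\alpha_0}{\Gamma(\l+1)}(1-e^{2i\alpha})^{-m/2}$, which is precisely the stated constant, completing the proof.

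The routine points — absolute convergence of all the integrals and the legitimacy of the passage to polar coordinates and the rearrangements above — follow immediately from $f,\cF_K(f)\in\cS(\mR^m)$ together with the polynomial bounds imposed on $A$ and $B$, and I would only mention them. The genuinely delicate part is not conceptual: it is the bookkeeping of the many constants ($\rho_{\pm\alpha,m}$, $c_m$, the volume of $\mS^{m-1}$, the normalisation of $d\omega$, and the factor $2^{\l}\Gamma(\l+1)$ produced by the Bessel addition formula inside Theorem \ref{key}). Once Theorem \ref{key} and Proposition \ref{radbehavior1} are in hand, there is no remaining obstacle.
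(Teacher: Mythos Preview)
Your proposal is correct and follows essentially the same route as the paper's own proof: identify $\cF_K(f)$ with a constant multiple of $\cF_{\alpha}(f)$ via Proposition \ref{radbehavior1}, plug this into the integral formula (\ref{exprtrans}), switch to polar coordinates, invoke Theorem \ref{key} for the spherical integral, and recognise the remaining radial integral as $H_{\lambda}$. The only difference is cosmetic: the paper states directly that $\cF_K(f)=\dfrac{2^{-\l}\alpha_0}{\Gamma(\l+1)}\cF_{\alpha}(f)$ (which is exactly your constant $\dfrac{c_m\alpha_0}{\rho_{\alpha,m}(2\pi)^{m/2}}$ after simplification), whereas you track the constants more explicitly.
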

\begin{proof}
If $f(x) = f_0(|x|)$ is real-valued and radial, then by means of Proposition \ref{radbehavior1} we obtain that $\cF_K \left( f \right) $ is a radial function as well and it coincides (up to a factor) with the fractional version $\cF_{\alpha}$ of the ordinary Fourier transform. More precisely we have
\begin{displaymath}
\cF_K \left( f \right)(x)= \frac{2^{-\l} \alpha_0}{\Gamma(\l +1)} \ \cF_{\alpha}\left( f \right) (x).
\end{displaymath}
Hence, by definition of $\tau^K_{y}$ we obtain
\begin{displaymath}
\cF_K ( \tau^K_{y} f )(x)= \frac{2^{-\l} \alpha_0}{\Gamma (\lambda +1)} \ K(y,x) \ \cF_{\alpha} \left( f \right) (x).
\end{displaymath}
Taking the inverse and using polar coordinates $x = r \eta$, $r=|x|$, we obtain
\begin{align*}
\tau^K_{y} f (x') &= \frac{2^{-\l} \alpha_0}{\Gamma (\lambda +1)} (\pi(1-e^{2i\alpha}))^{-m/2} \int_{\mathbb{R}^m} \widetilde{K(x,x')} \\
& \quad \times K(y,x) \ \cF_{\alpha}\left( f \right) (|x|) \ dx\\
&= \frac{2^{-\l+1} \alpha_0}{(\Gamma (\lambda +1))^2} (1-e^{2i\alpha})^{-m/2} \int_0^{+\infty} \cF_{\alpha}\left( f \right) (r) \\
& \times \left( \int_{\mS^{m-1}} \widetilde{K(r \eta,x')} \ K(y,r\eta) d\omega(\eta) \right)  r^{m-1} dr.
\end{align*}
In view of Theorem \ref{key} this becomes
\begin{align*}
\tau^K_{y} f (x') &= \frac{2 \alpha_0}{\Gamma(\l+1)} \ (1-e^{2i\alpha})^{-m/2} e^{-\frac{i}{2} (\cot \alpha)(|x'|^2-|y|^2)}  \\
& \times \left(\int_0^{+\infty} \cF_{\alpha}\left( f \right) (r) \ r^{m-1} u^{-\l} J_{\l}(u) dr \right) 
\end{align*}
with $u= \frac{r}{\sin{\alpha}} |x'-y|$. Taking into account the definition of the Hankel transform $H_{\l}$, we finally obtain
\begin{align*}
&\tau^K_{y}f(x') = \frac{2 \alpha_0}{\Gamma(\l + 1)} (1-e^{2i\alpha})^{-m/2} e^{-\frac{i}{2} (\cot \alpha)(|x'|^2-|y|^2)}\\
&\quad \times   H_{\lambda} \left\lbrack \cF_{\alpha} \left( f \right) \right\rbrack \left( \frac{|x' - y|}{\sin{\alpha}} \right).
\end{align*}
 \end{proof}

In the special case when $\alpha = \pi/2$, it follows from Theorem \ref{transradial} that the generalized translation operator $\tau^K_{y}$ coincides, up to a constant, with geometric translation if $f$ is a radial function.
\begin{corollary}\label{specialcasetransl}
Let $f \in \cS(\mR^{m})$ be a real-valued radial function on $\mathbb{R}^m$, i.e. $f(x) = f_0(|x|)$ with $f_0 : \mathbb{R}_+ \longmapsto \mathbb{R}$, then in case of $\alpha = \pi/2$, one has
\begin{displaymath}
\tau^K_{y}f(x) = \frac{2^{-\l} \alpha_0}{\Gamma(\l+1) }  f(|x - y|)
\end{displaymath}
with $\l = (m-2)/2$.
\end{corollary}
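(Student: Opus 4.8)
The plan is to specialize Theorem \ref{transradial} to $\alpha = \pi/2$ and verify that all the $\alpha$-dependent factors collapse to the claimed constant while the Hankel transform unwinds into a plain geometric translation. First I would set $\alpha = \pi/2$, so that $\cot\alpha = 0$, $\sin\alpha = 1$, and $e^{2i\alpha} = -1$; the prefactor $(1-e^{2i\alpha})^{-m/2}$ becomes $2^{-m/2}$, and the exponential $e^{-\frac{i}{2}(\cot\alpha)(|x|^2-|y|^2)}$ becomes $1$. Thus the right-hand side of Theorem \ref{transradial} reduces to
\begin{displaymath}
\tau^K_y f(x) = \frac{2\alpha_0}{\Gamma(\l+1)} \, 2^{-m/2} \, H_\lambda\left[\cF_{\pi/2}(f)\right]\left(|x-y|\right).
\end{displaymath}
Since at $\alpha = \pi/2$ the fractional Fourier transform $\cF_\alpha$ is just the classical Fourier transform $\cF$ (both the exponential weight $e^{\frac{i}{2}\cot\alpha(\cdots)}$ and the scaling $1/\sin\alpha$ trivialize, and $\rho_{\pi/2,m} = (2\pi)^{-m/2}$), we have $\cF_{\pi/2}(f) = \cF(f)$.

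Next I would invoke the well-known fact that for a radial function $f(x) = f_0(|x|)$ on $\mR^m$, its Fourier transform is again radial and is given (up to normalization) by the Hankel transform of order $\lambda = (m-2)/2$, and conversely; more precisely, $H_\lambda$ is, up to constants, an involution on radial functions and $H_\lambda[\cF(f)]$ recovers $f$ itself up to an explicit constant depending only on $m$. Concretely, using the standard identity $\cF(f)(y) = |y|^{-\lambda}\int_0^\infty f_0(r) J_\lambda(|y|r) r^{\lambda+1}\,dr$ valid up to a fixed power of $2\pi$, one sees that applying $H_\lambda$ a second time returns $f_0$ multiplied by a constant that is a power of $2\pi$. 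Combining this constant with the $2\alpha_0/\Gamma(\l+1)$ and $2^{-m/2}$ already present, and keeping careful track of the $(2\pi)$-powers coming from $\rho_{\pi/2,m}$ and from the normalization of $H_\lambda$, everything should collapse to the single factor $2^{-\l}\alpha_0/\Gamma(\l+1)$, with the argument of $f_0$ being $|x-y|$, i.e. $\tau^K_y f(x) = \frac{2^{-\l}\alpha_0}{\Gamma(\l+1)} f_0(|x-y|) = \frac{2^{-\l}\alpha_0}{\Gamma(\l+1)} f(|x-y|)$.

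The main obstacle I anticipate is bookkeeping of the normalization constants: the precise power of $2\pi$ (and of $2$) that appears when one asserts that $H_\lambda$ composed with the radial Fourier transform is the identity depends on exactly which normalization of the Hankel transform and of $\cF$ one uses, and the statement here uses the specific $H_\lambda$ defined in Theorem \ref{transradial} together with $\rho_{\pi/2,m} = (2\pi)^{-m/2}$. One must check that $\frac{2\alpha_0}{\Gamma(\l+1)}\cdot 2^{-m/2}$ times the constant from the double Hankel/Fourier identity equals $\frac{2^{-\l}\alpha_0}{\Gamma(\l+1)}$; since $m/2 = \lambda + 1$, one needs the double-transform constant to be $2^{-\lambda}/(2\cdot 2^{-m/2}) = 2^{-\lambda}\cdot 2^{\lambda+1}/2 = 1$, so in fact the claim is that with these normalizations $H_\lambda \circ \cF$ acts as the identity on radial functions — a clean consistency check rather than a deep computation. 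Once this constant-matching is confirmed, the corollary follows immediately from Theorem \ref{transradial}.
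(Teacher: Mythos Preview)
Your proposal is correct and follows essentially the same route as the paper's proof: specialize Theorem \ref{transradial} to $\alpha=\pi/2$, identify $\cF_{\pi/2}$ with the classical Fourier transform, and use that for radial functions the Fourier transform coincides with the Hankel transform $H_\lambda$, which is its own inverse with the normalizations in play. Your constant check is accurate (indeed $\cF(f)=H_\lambda f_0$ exactly with the $(2\pi)^{-m/2}$ normalization, so $H_\lambda\circ\cF$ is the identity on radial functions and the remaining factor $\tfrac{2\alpha_0}{\Gamma(\lambda+1)}\cdot 2^{-m/2}=\tfrac{2^{-\lambda}\alpha_0}{\Gamma(\lambda+1)}$ is precisely what is claimed); the paper simply asserts this without the explicit bookkeeping.
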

\begin{proof}
In case of $\alpha = \pi/2$, the fractional Fourier transform $\cF_{\alpha}$ reduces to the classical Fourier transform $\cF$.
Moreover, taking into account that for radial functions the classical Fourier transform coincides with the Hankel transform $H_{\l}$ and that the inverse Hankel transform is given by
\begin{displaymath}
f(s):= \int_0^{+\infty} H_{\l}f(r) \ \frac{J_{\l}(rs)}{(rs)^{\l}} r^{2\l+1} dr,
\end{displaymath}
which holds under mild conditions on $f$, the desired result follows.
 \end{proof}

These results allow us to give more details about the new convolution products, when one of the functions involved is radial. This is summarized in the following proposition.
\begin{proposition}
If $g \in \cS(\mR^{m}) \otimes \cC l_{0,m}$ and $f \in \cS(\mR^{m})$ is a real-valued radial function, then
\begin{align*}
&\cF_K \left( f \ast_{L} g \right) = \cF_K \left( g \ast_L f \right)\\
& = \cF_K \left( f \ast_R g \right) = \cF_K \left( g \ast_R f \right)\\
 &=\rho_{\a,m}^{-1}   \cF_K \left( f \right) \cF_K \left( g \right).
\end{align*}
In particular, under these assumptions one has
\begin{displaymath}
 f \ast_{L} g  =  g \ast_L f = f \ast_R g =  g \ast_R f.
 \end{displaymath}
\end{proposition}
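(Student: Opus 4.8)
The plan is to reduce everything to the single identity
$\cF_K(f \ast_L g) = \cF_K(f \ast_R g) = \rho_{\a,m}^{-1}\,\cF_K(f)\,\cF_K(g)$,
after which injectivity of $\cF_K$ on $\cS(\mR^m)\otimes\cC l_{0,m}$ (immediate from the explicit inverse in Theorem \ref{inverse}) forces the four convolution products to coincide. The two facts I would lean on throughout are already available: when $f(x)=f_0(|x|)$ is real-valued and radial, the proof of Theorem \ref{transradial} shows that $\cF_K(f) = \tfrac{2^{-\l}\alpha_0}{\Gamma(\l+1)}\cF_\alpha(f)$ is a \emph{scalar-valued} (complex, radial) function, and Theorem \ref{transradial} itself shows that $\tau^K_y f(x)$ is scalar-valued. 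Since scalars are central in the (complexified) Clifford algebra, this is exactly what is needed to commute factors past one another.

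First I would dispose of two of the four equalities with no computation. Because $f(y)$ is a real scalar it commutes with $\tau^K_y g(x)$, so $g \ast_L f = \int_{\mR^m}[\tau^K_y g(x)]\,f(y)\,dy = \int_{\mR^m}f(y)\,[\tau^K_y g(x)]\,dy = f \ast_R g$; and because $\tau^K_y f(x)$ is a scalar it commutes with $g(y)$, so $g \ast_R f = \int_{\mR^m}g(y)\,[\tau^K_y f(x)]\,dy = \int_{\mR^m}[\tau^K_y f(x)]\,g(y)\,dy = f \ast_L g$.

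Next I would compute the two Fourier transforms. For $f \ast_L g$, interchanging $\cF_K$ with the $y$-integral (legitimate by Schwartz decay and Fubini) gives $\cF_K(f \ast_L g)(\xi) = \int_{\mR^m}\cF_K(\tau^K_y f)(\xi)\,g(y)\,dy = \int_{\mR^m}K(y,\xi)\,\cF_K(f)(\xi)\,g(y)\,dy$ by Definition \ref{defTrans}; pulling the scalar $\cF_K(f)(\xi)$ to the front and recognizing $\int_{\mR^m}K(y,\xi)\,g(y)\,dy = \rho_{\a,m}^{-1}\cF_K(g)(\xi)$ yields $\rho_{\a,m}^{-1}\cF_K(f)\cF_K(g)$. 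For $f \ast_R g$, the same steps apply after moving the central scalar $f(y)$ around: $\cF_K(f \ast_R g)(\xi) = \int_{\mR^m}f(y)\,K(y,\xi)\,\cF_K(g)(\xi)\,dy = \big(\int_{\mR^m}f(y)\,K(y,\xi)\,dy\big)\cF_K(g)(\xi) = \rho_{\a,m}^{-1}\cF_K(f)\cF_K(g)$, the last equality using $\int_{\mR^m}f(y)\,K(y,\xi)\,dy = \int_{\mR^m}K(y,\xi)\,f(y)\,dy = \rho_{\a,m}^{-1}\cF_K(f)(\xi)$. Combining the displays and applying $\cF_K^{-1}$ gives the stated chain of equalities, and in particular $f \ast_L g = g \ast_L f = f \ast_R g = g \ast_R f$.

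The only delicate points are the standard bookkeeping ones: that $\tau^K_y f$ and the four convolutions all lie in a space where $\cF_K(\tau^K_y f)=K(y,\cdot)\cF_K(f)$ and $\cF_K^{-1}\cF_K=\mathrm{id}$ are valid, and that $\cF_K$ may be swapped with the integration in $y$ — both handled by the mapping properties of $\cF_K$ on $\cS(\mR^m)\otimes\cC l_{0,m}$ together with the polynomial bounds on $A$ and $B$, exactly as in Sections \ref{secCA}--\ref{secTrans}. No new estimates are required; the real content is simply the scalarity of $\cF_K(f)$ and of $\tau^K_y f$.
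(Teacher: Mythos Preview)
Your argument is correct and follows the same route as the paper: use that $\cF_K(f)$ and $\tau^K_y f$ are central (scalar) when $f$ is real radial, then compute $\cF_K$ of the convolutions by swapping with the $y$-integral and invoking Definition \ref{defTrans}, and finally apply $\cF_K^{-1}$. Your organization is slightly tidier --- you first kill the identities $g\ast_L f = f\ast_R g$ and $g\ast_R f = f\ast_L g$ directly from centrality before Fourier transforming --- and you are more careful than the paper in noting that $\cF_K(f)$ is complex-valued (not real-valued) for general $\alpha$; but the substance is the same.
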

\begin{proof} If $f$ is a real-valued radial function, then from Proposition \ref{radbehavior1} we obtain that $\cF_K \left( f \right)$ is a real-valued radial function as well. This implies that $\cF_K \left( f \right) \, h = h\,  \cF_K \left( f \right)$ for any Clifford algebra-valued function $h$. Moreover, from Theorem \ref{transradial} we observe that also $\tau^K_{y}f$ commutes with any Clifford algebra-valued function.

Let us now, for example, show that $\cF_K \left( f \ast_{L} g \right) = \rho_{\a,m}^{-1}  \cF_K \left( f \right) \cF_K \left( g \right)$. The other statements are proved similarly. By definition of the transform $\cF_K$, the convolution $\ast_{L}$ and the Fubini theorem, we obtain consecutively
\begin{eqnarray*}  
& & \cF_K \left( f \ast_{L} g \right) (x)\\
 & = &\rho_{\a,m} \int_{\mathbb{R}^m} K(t,x) \, \left( \int_{\mathbb{R}^m} [\tau^K_{y}f(t)]  g(y) dy \right) \, dt\\
 & = &  \int_{\mathbb{R}^m} \left( \rho_{\a,m} \int_{\mathbb{R}^m} K(t,x) [ \tau^K_{y}f(t) ] dt \right) \, g(y) dy\\
& = &  \int_{\mathbb{R}^m} \cF_K \left(  \tau^K_{y}f \right)(x)\ g(y) dy\\
& = &   \int_{\mathbb{R}^m} K(y,x) \ \cF_K \left( f \right)(x)g(y) dy\\
& = &\rho_{\a,m}^{-1} \cF_K \left( f \right)(x) \rho_{\a,m} \int_{\mathbb{R}^m} K(y,x) g(y) dy\\
& = & \rho_{\a,m}^{-1}\cF_K \left( f \right)(x) \,\cF_K \left( g \right) (x),
\end{eqnarray*}
where we have used Definition \ref{defTrans} and the fact that\\ $K(y,x) \cF_K\left( f \right)= \cF_K\left( f \right) K(y,x)$.

Finally, by applying the inverse transform $\cF_K^{-1}$ on
\begin{displaymath}
\cF_K \left( f \ast_{L} g \right) = \cF_K \left( g \ast_L f \right) = \cF_K \left( f \ast_R g \right) = \cF_K \left( g \ast_R f \right),
\end{displaymath}
we obtain
\begin{displaymath}
f \ast_{L} g  =  g \ast_L f = f \ast_R g =  g \ast_R f.
\end{displaymath}
 
\end{proof}


\section{Convolution products in Approach B}
\label{secB}

\subsection{Definition, eigenvalues and eigenfunctions}

Let us start by defining the family of transforms we will be looking at in this section.

\begin{definition}\label{def:gft}
Denote by $\cI_{m}$ the set $\{ i \in \cC l_{0,m} | i^{2}=-1\}$ of geometric square roots of minus one. Let $F_1:=\{i_1,...,i_\mu\},F_2:=\{i_{\mu+1},...,i_m\}$ be two ordered finite sets of such square roots, $i_k\in\mathcal I_m,\forall k=1,...,m$. The geometric Fourier transform (GFT) $\mathcal F_{F_1,F_2}$ of a function $f: \mR^{m} \rightarrow \cC l_{0,m}$ takes the form:
\begin{align*}
\cF_{F_1,F_2} (f)(u)&:= (2\pi)^{-\frac m2}\int_{\mR^{m}} \left( \prod_{k=1}^{\mu} e^{-i_{k} x_{k}u_{k}} \right)\\
& \quad \times f(x)  \left(\prod_{k=\mu+1}^{m} e^{-i_{k} x_{k}u_{k}} \right) dx.
\end{align*}
\end{definition}
This definition is a special case of the general geometric Fourier transforms from \cite{BU}, where also non-linear functions in the exponentials are allowed. We will use this restriction to guarantee that the inverse transform of any GFT is a GFT itself, namely 
\[
 \cF^{-1}_{F_1,F_2}=\cF_{\{-i_\mu,...,-i_1\},\{-i_m,...,-i_{\mu+1}\}}.
\]
Moreover, the restriction also allows us to obtain the eigenvalues and eigenfunctions of the GFT. They are given in the following theorem.

\begin{theorem}
\label{eigenvalsGFT}
The basis $\{ \psi_{j_1,j_2, \ldots, j_m}\}$ of $\cS(\mR^{m}) \otimes \cC l_{0,m}$ diagonalizes the GFT. One has
\begin{align*}
&\cF_{F_1,F_2} ( \psi_{j_1,j_2, \ldots, j_m})\\
& = \left( \prod_{k=1}^{\mu}(-i_{k})^{j_k} \right)  \psi_{j_1,j_2, \ldots, j_m}
 \left(\prod_{k=\mu+1}^{m} (-i_{k})^{j_k} \right).
 \end{align*}
\end{theorem}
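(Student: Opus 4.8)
The plan is to reduce the statement to a one-dimensional computation. Write $\psi_{j_1,\ldots,j_m}(x) = \psi_{j_1}(x_1)\cdots\psi_{j_m}(x_m)$, and observe that in the defining integral for $\cF_{F_1,F_2}$ the factor $f(x) = \psi_{j_1,\ldots,j_m}(x)$ separates as a product of functions of the single variables $x_1,\ldots,x_m$. The subtlety is that each $\psi_{j_k}(x_k)$ is a \emph{scalar}-valued function, so it commutes with every Clifford element; hence one may freely move each scalar factor $\psi_{j_k}(x_k)$ next to the corresponding exponential $e^{-i_k x_k u_k}$, and the whole integrand becomes a product $\left(\prod_{k=1}^{\mu} \psi_{j_k}(x_k) e^{-i_k x_k u_k}\right)\left(\prod_{k=\mu+1}^{m} \psi_{j_k}(x_k)e^{-i_k x_k u_k}\right)$ (keeping the two ordered blocks on the left and right of where $f$ sat). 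By Fubini, the $m$-fold integral factorizes into a product of one-dimensional integrals, the first $\mu$ of them staying to the left and the last $m-\mu$ to the right.

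Next I would treat one such one-dimensional integral. For a fixed $k \le \mu$ (the case $k > \mu$ is identical), consider
\[
(2\pi)^{-1/2}\int_{\mR} e^{-i_k x_k u_k}\,\psi_{j_k}(x_k)\,dx_k.
\]
Since $i_k^2 = -1$, the subalgebra $\mR \oplus \mR i_k \subset \cC l_{0,m}$ is isomorphic to $\mC$ via $i_k \mapsto i$, and under this isomorphism the integral is exactly the classical one-dimensional Fourier transform of the Hermite function $\psi_{j_k}$, evaluated at $u_k$. By the classical eigenfunction relation (the one-variable case of formula (\ref{eigCLFT})), this equals $(-i)^{j_k}\psi_{j_k}(u_k)$, which in the Clifford algebra reads $(-i_k)^{j_k}\psi_{j_k}(u_k)$. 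Substituting this back into the factorized product and recollecting the factors $\psi_{j_k}(u_k)$ into $\psi_{j_1,\ldots,j_m}(u)$ — again legitimately, since these are scalars — yields
\[
\cF_{F_1,F_2}(\psi_{j_1,\ldots,j_m})(u) = \left(\prod_{k=1}^{\mu}(-i_k)^{j_k}\right)\psi_{j_1,\ldots,j_m}(u)\left(\prod_{k=\mu+1}^{m}(-i_k)^{j_k}\right),
\]
which is the claimed formula.

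The only genuine point requiring care — and hence the main obstacle — is the bookkeeping of non-commutativity: the scalars $(-i_k)^{j_k}$ for $k \le \mu$ need not commute with those for $k > \mu$, nor with the monogenic/Clifford content that would appear if $f$ were Clifford-valued, so one must keep the left block and the right block strictly separated throughout and never attempt to merge the two products into a single one. Everything else — the factorization by Fubini, the passage to the $\mC$-isomorphic subalgebra, and the invocation of the classical Hermite eigenvalue relation — is routine. I would also remark in passing that this result is the $\textbf{B}$-approach analogue of Theorem \ref{eigenvalues}, and that combined with the formula $\cF^{-1}_{F_1,F_2} = \cF_{\{-i_\mu,\ldots,-i_1\},\{-i_m,\ldots,-i_{\mu+1}\}}$ it confirms consistency, since applying both transforms multiplies the left block by $\prod(-i_k)^{j_k}(i_k)^{j_k} = 1$ and likewise on the right.
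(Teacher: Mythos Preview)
Your proposal is correct and follows essentially the same route as the paper: factorize the tensor-product basis function into scalar one-variable Hermite functions, use Fubini to split the $m$-fold integral into a product of one-dimensional integrals (keeping the first $\mu$ on the left and the last $m-\mu$ on the right), and then invoke the one-dimensional Hermite eigenvalue relation, which the paper also cites as a special case of formula (\ref{eigCLFT}). Your explicit mention of the isomorphism $\mR \oplus \mR i_k \cong \mC$ and the bookkeeping remark about never merging the left and right blocks make the argument slightly more detailed than the paper's write-up, but the substance is identical.
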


\begin{proof}
By direct computation, we find
\begin{align*}
 &(2\pi)^{\frac m2} \cF_{F_1,F_2} ( \psi_{j_1,j_2, \ldots, j_m})\\
  &= \int_{\mR^{m}} \left( \prod_{k=1}^{\mu} e^{-i_{k} x_{k}u_{k}} \right) \\
  & \quad \times \psi_{j_1,j_2, \ldots, j_m}(x)  \left(\prod_{k=\mu+1}^{m} e^{-i_{k} x_{k}u_{k}} \right) dx\\
  &= \left( \prod_{k=1}^{\mu}\int_{\mR}  e^{-i_{k} x_{k}u_{k}} \psi_{j_{k}}(x_{k}) dx_{k} \right) \\
  & \quad \times \left( \prod_{k=\mu+1}^{m} \int_{\mR}  \psi_{j_{k}}(x_{k})e^{-i_{k} x_{k}u_{k}}dx_{k} \right) \end{align*}\begin{align*}
&=  \left( \prod_{k=1}^{\mu}(-i_{k})^{j_k} \psi_{j_{k}}(u_{k}) \right) \left( \prod_{k=\mu+1}^{m}   \psi_{j_{k}}(u_{k})(-i_{k})^{j_k}\right) \\
&= \left( \prod_{k=1}^{\mu}(-i_{k})^{j_k} \right)  \psi_{j_1,j_2, \ldots, j_m}
 \left(\prod_{k=\mu+1}^{m} (-i_{k})^{j_k} \right).
\end{align*}
Here, we used the result
\[
\int_{\mR}  \psi_{j_{k}}(x_{k})e^{-i_{k} x_{k}u_{k}}dx_{k} = (-i_{k})^{j_k} \psi_{j_{k}}(u_{k}) 
\]
which is a special case of formula (\ref{eigCLFT}).
 
\end{proof}

An important example of this family of transforms is the two-sided quaternionic Fourier transform (qFT). We will treat it in detail in Section \ref{secQFT}

\begin{remark}
Note that all subsequent results also hold in the more general setup of functions $f: \mR^{m} \rightarrow \cC l_{p,q}$, compare \cite{BU}. However, this is not the case for the transforms of approach \textbf{A}, due to the fact that the Dirac operator is no longer elliptic in arbitrary signature.
\end{remark}

\subsection{Convolution formula based on Mustard's idea}
\label{GFTMustard}

Based on the idea of Mustard we define a generalized convolution for any geometric Fourier transform.
\begin{definition}\label{def:conv_must}
For any GFT $\cF_{F_1,F_2}$ we define the convolution $*_{F_1,F_2}$ by
\[
 (f*_{F_1,F_2}g)(x):=(2\pi)^{\frac m 2}\cF_{F_1,F_2}^{-1}(\cF_{F_1,F_2}(f)\cF_{F_1,F_2}(g))(x).
\]
\end{definition}
Now we want to express the convolution $*_{F_1,F_2}$ by means of the standard convolution
\[
 (f* g)(x)=\int_{\mR^m}f(y) g(x-y) \ dy.
\]
To that aim, we introduce the following notation.
\begin{notation}\label{not:phi,gamma}
For functions $f,g:\mR^m\to\cC l_{0,m}$ and multi-indices $\vec{\phi},\vec{\gamma} \in\{0,1\}^{m}$ we put
\begin{align*}
 f^{\vec{\phi}}(x)&:=f((-1)^{\phi_{1}}x_1,...,(-1)^{\phi_{m}}x_m),\\
g^{\vec{\gamma}}(x)&:=g((-1)^{\gamma_{1}}x_1,...,(-1)^{\gamma_{m}}x_m).
\end{align*}
\end{notation}

The following theorem is our main result. 
\begin{theorem}
\label{mustardconvGFT}
Let $J=\{0,1\}^{4\times m}$ with $j_{1,k}+j_{2,k}+j_{3,k}\in\{0,2\}$ and $j_{4,k}=0$ for all $k=1,\ldots,m$ be a set of multi-indices. Any generalized convolution $*_{F_1,F_2}$ from Definition \ref{def:conv_must} can be expressed as a sum of classical convolutions using Notation \ref{not:phi,gamma} by
 \begin{align*}
& (f*_{F_1,F_2}g)(x)\\
&=\frac1{4^m}\sum_{\vec{j} \in J}\sum_{\vec{\phi},\vec{\gamma} \in\{0,1\}^m}c_{\vec{j},\vec{\phi},\vec{\gamma}}\\
&  \left(\prod_{k=\mu}^1(i_k)^{j_{1,k}}\prod_{k=1}^\mu (-i_k)^{j_{2,k}}f^{\vec{\phi}}\prod_{k=\mu+1}^m(-i_k)^{j_{2,k}} \right) \\
  & \quad  * \left( \prod_{k=1}^\mu (-i_k)^{j_{3,k}} g^{\vec{\gamma}} \prod_{k=\mu+1}^m (-i_k)^{j_{3,k}}\prod_{k=m}^{\mu+1}(i_k)^{j_{1,k}} \right) (x)
\end{align*}
with the sign $c_{\vec{j},\vec{\phi},\vec{\gamma}}$ given by
\[
c_{\vec{j},\vec{\phi},\vec{\gamma}}=\prod_{k=1}^m(-1)^{(j_{(2{\phi}_k +{\gamma}_k+1,k)}+1)(\delta_{(j_{1,k}+j_{2,k}+j_{3,k})}-1)},
\]
where
\begin{equation*}
\delta_{(\ell)} := \begin{cases}
                            1 ,&\text{ if } \ell =0,\\
			    0 ,&\text{ if } \ell \not= 0.
                           \end{cases}
\end{equation*}
\end{theorem}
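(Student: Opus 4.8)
The plan is to start from the definition $(f*_{F_1,F_2}g)(x)=(2\pi)^{m/2}\cF_{F_1,F_2}^{-1}(\cF_{F_1,F_2}(f)\,\cF_{F_1,F_2}(g))(x)$ and unfold all three transforms into integrals, so that everything is expressed as a multiple integral against products of one-dimensional exponentials $e^{\pm i_k x_k u_k}$. First I would write $\cF_{F_1,F_2}(f)(u)$ and $\cF_{F_1,F_2}(g)(u)$ as their defining integrals in fresh variables, say $y$ and $z$, and $\cF^{-1}_{F_1,F_2}$ as the GFT with reversed, negated roots (as recorded in the excerpt). The product $\cF_{F_1,F_2}(f)(u)\,\cF_{F_1,F_2}(g)(u)$ then sandwiches a left factor $\prod_{k=\mu}^{1}e^{i_k x_k u_k}$ coming from the inverse transform, then the left half of $\cF(f)$, then $f(y)$, then the right half of $\cF(f)$, then the left half of $\cF(g)$, then $g(z)$, then the right half of $\cF(g)$, then the right factor $\prod_{k=m}^{\mu+1}e^{i_k x_k u_k}$ from the inverse. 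The goal is to integrate out $u$; but since the algebra is noncommutative, the $u_k$-dependent exponentials do not all collect into a single Gaussian-delta kernel as in the classical case.

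The key algebraic device is to expand each exponential $e^{\pm i_k x_k u_k}=\cos(x_k u_k)\pm i_k\sin(x_k u_k)$. This is where the index set $J$ and the multi-indices $\vec j$ enter: each factor $(i_k)^{j_{1,k}}$, $(-i_k)^{j_{2,k}}$, $(-i_k)^{j_{3,k}}$ records whether, for coordinate $k$, we picked the $\cos$ term ($j_\bullet{}_{,k}=0$) or the $\sin$ term ($j_\bullet{}_{,k}=1$) from the corresponding exponential, and the sign/root bookkeeping is exactly $(\mp i_k)$. After this expansion, for each coordinate $k$ the $u_k$-integral is over a product of at most four trigonometric functions of $u_k$ (one from each of the four exponentials indexed by the four rows of $J$); the condition $j_{1,k}+j_{2,k}+j_{3,k}\in\{0,2\}$ and $j_{4,k}=0$ selects precisely the parity combinations for which the $u_k$-integral produces, via the classical identity $\int_{\mathbb R}e^{-i s u}\,du = 2\pi\delta(s)$ (equivalently the product-to-sum formulas for $\cos$/$\sin$ followed by a Dirac delta), a delta function in $x_k$-translated variables rather than vanishing or giving an odd integrand. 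Collapsing these deltas forces the arguments of $f$ and $g$ to become $(-1)^{\phi_k}y_k$ and $(-1)^{\gamma_k}z_k$ with $z$ tied to $x-y$ — whence the reflected functions $f^{\vec\phi}$, $g^{\vec\gamma}$ and the convolution $*$ in the $y$-variable. The factor $1/4^m$ is the normalization from the four-fold trig expansion combined with the $(2\pi)$ factors from the three GFT normalizations and the inverse-transform delta.

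The sign $c_{\vec j,\vec\phi,\vec\gamma}$ must then be tracked carefully: it comes from (a) the sign picked up when a $\sin$ is even- vs odd-reflected — i.e. $\sin((-1)^{\phi_k}x_k u_k)=(-1)^{\phi_k}\sin(x_k u_k)$ — and (b) the signs in the product-to-sum identities $\cos\cos$, $\cos\sin$, $\sin\sin$, which differ by a sign precisely when all three of $j_{1,k},j_{2,k},j_{3,k}$ contribute (the $\delta_{(\cdot)}-1$ factor is $0$ when the sum is $0$, i.e.\ all cosines, no sign issue, and $-1$ when the sum is $2$). The exponent $(j_{(2\phi_k+\gamma_k+1,k)}+1)$ selects which of rows $2$ or $3$ of $\vec j$ is the ``active'' one for the reflection sign in coordinate $k$, depending on the parities $\phi_k,\gamma_k$; pinning this down is the most delicate combinatorial step.

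The main obstacle I anticipate is exactly this sign bookkeeping across a noncommutative product of $4m$ trigonometric factors while simultaneously integrating out $u$ one coordinate at a time and keeping the ordering of the Clifford factors $i_k$ intact (note that in the statement $f^{\vec\phi}$ is flanked on the left by $\prod_{k=\mu}^{1}(i_k)^{j_{1,k}}\prod_{k=1}^{\mu}(-i_k)^{j_{2,k}}$ and on the right by $\prod_{k=\mu+1}^{m}(-i_k)^{j_{2,k}}$, and $g^{\vec\gamma}$ similarly, with the $j_{1}$-factors from the inverse transform wrapping around the outside on opposite sides). I would organize the computation to first verify the scalar (all-$j_\bullet{}=0$) term reproduces the classical convolution theorem, then argue by the multilinearity of the $\cos/\sin$ expansion that the general term is obtained coordinatewise, so that the full formula factors over $k$ and the product form of $c_{\vec j,\vec\phi,\vec\gamma}$ follows; the sanity check in the commutative/quaternionic case of Section \ref{secQFT} would confirm the signs.
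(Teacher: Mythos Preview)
Your proposal is correct and follows the same route as the paper: expand the triple integral, split each exponential into its $\cos$ and $i_k\sin$ parts so that the real-valued trigonometric factors commute past the Clifford elements and collect into a product over $k$, apply the product-to-sum identities to the resulting triple products, integrate out $u_k$ via $\int_{\mathbb R}\cos(su)\,du=2\pi\delta(s)$ and $\int_{\mathbb R}\sin(su)\,du=0$ (which is exactly what produces the parity constraint $j_{1,k}+j_{2,k}+j_{3,k}\in\{0,2\}$ and the signs $c_{\vec j,\vec\phi,\vec\gamma}$), then collapse the deltas and substitute $y_k\mapsto(-1)^{\phi_k}y_k$ to recognise classical convolutions of $f^{\vec\phi}$ with $g^{\vec\gamma}$. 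One small correction: there are exactly \emph{three} $u_k$-dependent exponentials per coordinate (one each from $\cF^{-1}$, $\cF(f)$, $\cF(g)$), not four --- the row $j_{4,k}\equiv 0$ is purely a placeholder so that the index $2\phi_k+\gamma_k+1\in\{1,2,3,4\}$ in the sign formula is always defined.
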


\begin{proof}
For the multi-indices $ \vec{j} \in \{0,1\}^{3\times m}$, which we address by $j_{\nu,k} \in \lbrace 0,1 \rbrace$ for $\nu=1,2,3$ representing the integration variables $x,y,z$ and $k=1,...,m$ representing the coordinates, we use the notation
\begin{equation*}
e^{-i_k x_k u_k}_{j_{1,k}} :=\begin{cases}
                            \cos(x_k u_k),&\text{ if }j_{1,k}=0,\\
			    -\sin(x_k u_k),&\text{ if }j_{1,k}=1
                           \end{cases}
\end{equation*}
and for $\nu=2,3$ corresponding to $y,z$ analogously. Note that $e^{-i_k x_k u_k}_{j_{1,k}}$ is always real valued and therefore in the center of the Clifford algebra. We get
\begin{align*}
& (2 \pi)^{m}(f \ast_{F_1,F_2} g)(x)\\
&=  \int_{\mR^m} \prod_{k=\mu}^1 e^{i_k x_k u_k} \\
 & \quad \times \left( \int_{\mR^m}\prod_{k=1}^\mu e^{-i_k y_k u_k} f(y) \prod_{k=\mu+1}^m e^{-i_k y_k u_k} dy \right)\\
& \quad \times \left( \int_{\mR^m} \prod_{k=1}^\mu e^{-i_k z_k u_k} g(z) \prod_{k=\mu+1}^m e^{-i_k z_k u_k} dz \right) \\
 & \quad \times\prod_{k=m}^{\mu+1} e^{i_k x_k u_k} du\\ 
&=   \int_{\mR^{3m}} \prod_{k=\mu}^1 e^{i_k x_k u_k} \prod_{k=1}^\mu e^{-i_k y_k u_k} f(y) \prod_{k=\mu+1}^m e^{-i_k y_k u_k}\\
 & \quad \times \prod_{k=1}^\mu e^{-i_k z_k u_k} g(z) \prod_{k=\mu+1}^m e^{-i_k z_k u_k} \prod_{k=m}^{\mu+1} e^{i_k x_k u_k} \ dy \ dz \ du
 \end{align*}
 \begin{align*}
&=  \sum_{\vec{j} \in\{0,1\}^{3\times m}} \int_{\mR^{3m}}
\left( \prod_{k=\mu}^1 (i_k)^{j_{1,k}} \prod_{k=\mu}^1 e^{i_k x_k u_k}_{j_{1,k}} \right)\\
 & \quad \times  \left( \prod_{k=1}^\mu (i_k)^{j_{2,k}} \prod_{k=1}^\mu e^{- i_k y_k u_k}_{j_{2,k}} \right)  f(y) \\
 & \quad \times
\left( \prod_{k=\mu+1}^m (i_k)^{j_{2,k}} \prod_{k=\mu+1}^m e^{-i_k y_k u_k}_{j_{2,k}} \right) \\
 & \quad \times\left( \prod_{k=1}^\mu (i_k)^{j_{3,k}} \prod_{k=1}^\mu e^{-i_k z_k u_k}_{j_{3,k}} \right)\\
 & \quad \times
g(z)  \left(\prod_{k=\mu+1}^m (i_k)^{j_{3,k}} \prod_{k=\mu+1}^m e^{-i_k z_k u_k}_{j_{3,k}} \right) \\
 & \quad \times \left( \prod_{k=m}^{\mu+1} (i_k)^{j_{1,k}} \prod_{k=m}^{\mu+1} e^{i_k x_k u_k}_{j_{1,k}} \right) \ dy \ dz \ du\\
&=\sum_{\vec{j}\in\{0,1\}^{3\times m}} \int_{\mR^{2m}} \left( \prod_{k=1}^m \int_{- \infty}^{+ \infty} e^{i_k x_k u_k}_{j_{1,k}} e^{-i_k y_k u_k}_{j_{2,k}}  e^{-i_k z_k u_k}_{j_{3,k}} \ du_k \right) \\
 & \quad \times \left(  \prod_{k=\mu}^1 (i_k)^{j_{1,k}} \right) \left( \prod_{k=1}^\mu (i_k)^{j_{2,k}} \right) f(y)  \left( \prod_{k=\mu+1}^m (i_k)^{j_{2,k}} \right)  \\
 & \quad \times \left( \prod_{k=1}^\mu (i_k)^{j_{3,k}}  \right)  g(z)    \left(\prod_{k=\mu+1}^m (i_k)^{j_{3,k}} \right)  \\
 & \quad \times \left( \prod_{k=m}^{\mu+1} (i_k)^{j_{1,k}} \right) dy \ dz. 
\end{align*}
In order to calculate the integration over the $u$-variable, we use the trigonometric equalities
\begin{align}
\label{trig}
\begin{split}
\sin x  \sin y  \sin z &= \frac{1}{4} \Big(\sin (x+y-z) + \sin (y+z-x) \\
& \quad + \sin (z+x-y) - \sin (x+y+z)\Big)\\
\cos x  \cos y  \cos z &= \frac{1}{4} \Big(\cos (x+y-z) + \cos (y+z-x) \\
& \quad+ \cos (z+x-y) + \cos (x+y+z)\Big)\\
\sin x  \sin y  \cos z &= \frac{1}{4} \Big(- \cos (x+y-z) + \cos (y+z-x)\\
& \quad + \cos (z+x-y) - \cos (x+y+z)\Big)\\
\sin x  \cos y  \cos z &= \frac{1}{4} \Big(\sin (x+y-z) - \sin (y+z-x) \\
& \quad+ \sin (z+x-y) + \sin (x+y+z)\Big).
\end{split}
\end{align}
For each index $k=1,...,m$ and $j_{1,k}+j_{2,k}+j_{3,k}=0$ we have that the $u_k$-integrand takes the form 
\begin{align*}
&e^{i_k x_k u_k}_{j_{1,k}} e^{-i_k y_k u_k}_{j_{2,k}} e^{-i_k z_k u_k}_{j_{3,k}}\\
&= \frac{1}{4} \biggl\lbrace \cos ((-x_k+y_k+z_k)u_k)+  \cos ((x_k-y_k+z_k)u_k) \\ 
& +\cos ((x_k+y_k-z_k)u_k)+\cos ((x_k+y_k+z_k)u_k) \biggr\rbrace
\end{align*}
and for $j_{1,k}+j_{2,k}+j_{3,k}=2$ we have
\begin{align*}
& e^{i_k x_k u_k}_{j_{1,k}} e^{-i_k y_k u_k}_{j_{2,k}} e^{-i_k z_k u_k}_{j_{3,k}}\\
&= \frac{(-1)^{j_{2,k} + j_{3,k}}}{4} \ \biggl\lbrace (-1)^{(j_{1,k}+1)} \cos ((-x_k+y_k+z_k)u_k) \\
& \quad +  (-1)^{(j_{2,k}+1)} \cos ((x_k-y_k+z_k)u_k) \\
& \quad+(-1)^{(j_{3,k}+1)} \cos ((x_k+y_k-z_k)u_k) \\
& \quad- \cos ((x_k+y_k+z_k)u_k) \biggr\rbrace.
\end{align*}
Hence for $j_{1,k}+j_{2,k}+j_{3,k}$ even we can summarize both in
\begin{equation*}
\begin{aligned}
& e^{i_k x_k u_k}_{j_{1,k}} e^{-i_k y_k u_k}_{j_{2,k}} e^{-i_k z_k u_k}_{j_{3,k}}= \frac{(-1)^{j_{2,k} + j_{3,k}}}{4} \\
& \times \biggl\lbrace (-1)^{(j_{1,k}+1)(\delta_{(j_{1,k}+j_{2,k}+j_{3,k})}-1)} \cos ((-x_k+y_k+z_k)u_k)\\
&+  (-1)^{(j_{2,k}+1)(\delta_{(j_{1,k}+j_{2,k}+j_{3,k})}-1)} \cos ((x_k-y_k+z_k)u_k) \\
& +(-1)^{(j_{3,k}+1)(\delta_{(j_{1,k}+j_{2,k}+j_{3,k})}-1)} \cos ((x_k+y_k-z_k)u_k)\\
& +(-1)^{(\delta_{(j_{1,k}+j_{2,k}+j_{3,k})}-1)}\cos ((x_k+y_k+z_k)u_k) \biggr\rbrace
\end{aligned}
\end{equation*}
with
\begin{equation*}
\delta_{(\ell)} := \begin{cases}
                            1 ,&\text{ if } \ell =0,\\
			    0 ,&\text{ if } \ell \not= 0.
                           \end{cases}
\end{equation*} 
In case of $j_{1,k}+j_{2,k}+j_{3,k}$ odd, the trigonometric equations (\ref{trig}) immediately show that $e^{i_k x_k u_k}_{j_{1,k}} e^{-i_k y_k u_k}_{j_{2,k}} e^{-i_k z_k u_k}_{j_{3,k}}$ equals a sum of sine functions.

Hence, by splitting the equation
\begin{displaymath}
\delta(x)=\frac{1}{2\pi}\int_{\mR} e^{-i xu} d u
\end{displaymath}
into real and imaginary part:
\begin{equation*}
\begin{aligned}
\delta(x)&=\frac{1}{2\pi}\int_{\mR} \cos( xu) du,\\
0&=\frac{1}{2\pi}\int_{\mR} \sin( xu) \ du,
\end{aligned}
\end{equation*}
one sees that all summands with $j_{1,k}+j_{2,k}+j_{3,k}$ odd vanish after integration over $u_k$, while the even ones become delta distributions. As a result we have
\begin{equation*}
\begin{aligned}
& \int_{- \infty}^{+ \infty} e^{i_k x_k u_k}_{j_{1,k}} e^{-i_k y_k u_k}_{j_{2,k}} e^{-i_k z_k u_k}_{j_{3,k}} \ du_k\\
&= \begin{cases}
0 , \quad \text{ if } j_{1,k}+j_{2,k}+j_{3,k} \mathrm{\ is \ odd},\\
\frac{\pi}{2} (-1)^{j_{2,k} + j_{3,k}} \sum_{\phi_k, \gamma_k \in \lbrace 0,1 \rbrace}\\ \quad \times (-1)^{(j_{(2\phi_k + \gamma_k+1,k)}+1)(\delta_{(j_{1,k}+j_{2,k}+j_{3,k})}-1)} &\\
\quad \times   \delta(x_k + (-1)^{\phi_k+1} y_k + (-1)^{\gamma_k+1} z_k),\\ \quad\text{ if } j_{1,k}+j_{2,k}+j_{3,k} \mathrm{\ is \ even}.
\end{cases}
\end{aligned}
\end{equation*}
with $j_{4,k} = 0 \ \forall k$ and also
\begin{equation*}
\begin{aligned}
& (f \ast_{F_1,F_2} g)(x)\\
 &= \frac{1}{4^m} \sum_{\stackrel{\vec{j}\in\{0,1\}^{4 \times m},}{\forall k=1,...,m:j_{1,k}+j_{2,k}+j_{3,k}\in\{0,2\}, j_{4,k}=0}} \sum_{\vec{\phi},\vec{\gamma} \in\{0,1\}^m} \\
 & \quad \times \left( \prod_{k=1}^{m}  (-1)^{(j_{(2\phi_k + \gamma_k+1,k)}+1)(\delta_{(j_{1,k}+j_{2,k}+j_{3,k})}-1)} \right)\\
& \quad \times \int_{\mR^m}  \left(  \prod_{k=\mu}^1 (i_k)^{j_{1,k}} \right) \left( \prod_{k=1}^\mu (-i_k)^{j_{2,k}} \right)  f(y) \\
 & \quad \times \left( \prod_{k=\mu+1}^m (-i_k)^{j_{2,k}} \right)  \left( \prod_{k=1}^\mu (-i_k)^{j_{3,k}}  \right) \\
& \quad \times \left( \int_{\mR^m} g(z_1, \ldots , z_m)  \right.\\  &\quad \times \left. \prod_{k=1}^{m}  \delta(x_k + (-1)^{\phi_k+1} y_k + (-1)^{\gamma_k+1} z_k) dz \right) \\
 & \quad \times \left(   \prod_{k=\mu+1}^m (-i_k)^{j_{3,k}} \right) \left( \prod_{k=m}^{\mu+1} (i_k)^{j_{1,k}} \right) dy. 
\end{aligned}
\end{equation*}
For each index $k=1,...,m$ using
\begin{displaymath}
g(x)=\int_{\mR} g(z) \delta(z-x) \ dz,
\end{displaymath}
we obtain
\begin{equation*}
\begin{aligned}
& (f \ast_{F_1,F_2} g)(x)\\
 &= \frac{1}{4^m} \sum_{\stackrel{\vec{j}\in\{0,1\}^{4\times m},}{\forall k=1,...,m:j_{1,k}+j_{2,k}+j_{3,k}\in\{0,2\}, j_{4,k}=0}} \sum_{\vec{\phi},\vec{\gamma} \in\{0,1\}^m} \\
 & \quad \times  \left( \prod_{k=1}^{m}  (-1)^{(j_{(2\phi_k + \gamma_k+1,k)}+1)(\delta_{(j_{1,k}+j_{2,k}+j_{3,k})}-1)} \right)\\
& \int_{\mR^m}  \left(  \prod_{k=\mu}^1 (i_k)^{j_{1,k}} \right) \left( \prod_{k=1}^\mu (-i_k)^{j_{2,k}} \right)  f(y) \\
 & \quad \times \left( \prod_{k=\mu+1}^m (-i_k)^{j_{2,k}} \right)  \left( \prod_{k=1}^\mu (-i_k)^{j_{3,k}}  \right) \\
& g((-1)^{\gamma_1} (x_1-(-1)^{\phi_1} y_1), \ldots , (-1)^{\gamma_m} (x_m-(-1)^{\phi_m} y_m) ) \\
 & \quad \times \left(   \prod_{k=\mu+1}^m (-i_k)^{j_{3,k}} \right)   \left( \prod_{k=m}^{\mu+1} (i_k)^{j_{1,k}} \right)  dy. 
\end{aligned}
\end{equation*}
Finally we execute the substitution $y'_k = (-1)^{\phi_k} y_k$, $k=1, \ldots,m$, and obtain
\begin{equation*}
\begin{aligned}
& (f \ast_{F_1,F_2} g)(x)\\
&= \frac{1}{4^m} \sum_{\stackrel{\vec{j}\in\{0,1\}^{4\times m},}{\forall k=1,...,m:j_{1,k}+j_{2,k}+j_{3,k}\in\{0,2\},j_{4,k}=0}} \sum_{\vec{\phi},\vec{\gamma} \in\{0,1\}^m}\\
& \quad \left( \prod_{k=1}^{m}  (-1)^{(j_{(2\phi_k + \gamma_k+1,k)}+1)(\delta_{(j_{1,k}+j_{2,k}+j_{3,k})}-1)} \right)\\
&  \left( \prod_{k=\mu}^1 (i_k)^{j_{1,k}}   \prod_{k=1}^\mu (-i_k)^{j_{2,k}}  f^{\vec{\phi}} \ \prod_{k=\mu+1}^m (-i_k)^{j_{2,k}} \right) \\
& \quad \ast  \left( \prod_{k=1}^\mu (-i_k)^{j_{3,k}}  g^{\vec{\gamma}}  \prod_{k=\mu+1}^m (-i_k)^{j_{3,k}} \prod_{k=m}^{\mu+1} (i_k)^{j_{1,k}}  \right) (x).
\end{aligned}
\end{equation*}
 
\end{proof}

\begin{remark}
One can now introduce an immediate analog of the convolution $f *_2 g$ mentioned in the introduction, by considering $g \ast_{F_1,F_2} f$. 
\end{remark}

\subsection{Convolution formula based on generalized translation}
\label{GFTtrans}

We define a generalized translation operator related to the GFT $\cF_{F_1,F_2}$. Contrary to our original definition in formula (\ref{gentrans}), we now have to take into account that the kernel consists of two parts.

\begin{definition}\label{def:transl}
For any GFT $\cF_{F_1,F_2}$ the general translation operator $\tau^{F_1,F_2}_{y}$ is defined by the relation
\begin{align*}
& \cF_{F_1,F_2}(\tau^{F_1,F_2}_{y}f)(u)\\
&:=\left(\prod_{k=1}^\mu e^{-i_ky_ku_k} \right)\cF_{F_1,F_2}(f)(u) \left(\prod_{k={\mu+1}}^m e^{-i_ky_ku_k} \right).
\end{align*}
\end{definition}

With calculations analogous to the ones in the previous section we can express the generalized translation operator $\tau^{F_1,F_2}_{y}$ by means of the standard translation
\[
 \tau_{y}f(x)=f(x-y).
\]

\begin{theorem}
\label{transGFT}
Let $J=\{0,1\}^{4\times m}$ with $j_{1,k}+j_{2,k}+j_{3,k}\in\{0,2\}$ and $j_{4,k}=0$ for all $k=1,\ldots,m$ be a set of multi-indices. The generalized translation operator $\tau^{F_1,F_2}_{y}$ from Definition \ref{def:transl} can be expressed as the sum of classical translations $\tau_{y^{\vec{\phi}}}f^{\vec{\gamma}}(x) = f^{\vec{\gamma}}(x-y^{\vec{\phi}})$ using Notation \ref{not:phi,gamma} and $y^{\vec{\phi}} = ((-1)^{\phi_1} y_1, \ldots, (-1)^{\phi_m} y_m)$ by
\begin{align*}
 \tau^{F_1,F_2}_{y}f(x)&=\frac1{4^m}\sum_{\vec{j} \in J}\sum_{\vec{\phi},\vec{\gamma} \in\{0,1\}^m}
c_{\vec{j},\vec{\phi},\vec{\gamma}} \\
  & \quad \times
\prod_{k=\mu}^1(i_k)^{j_{1,k}}\prod_{k=1}^\mu (-i_k)^{j_{2,k}}\prod_{k=1}^\mu(-i_k)^{j_{3,k}} \\
& \quad\times \tau_{y^{\vec{\phi}}}f^{\vec{\gamma}}(x) \\
  & \quad \times \prod_{k=\mu+1}^m(-i_k)^{j_{3,k}}\prod_{k=\mu+1}^m(-i_k)^{j_{2,k}}\prod_{k=m}^{\mu+1}(i_k)^{j_{1,k}}.
\end{align*}
\end{theorem}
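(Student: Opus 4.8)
The plan is to run through the computation from the proof of Theorem~\ref{mustardconvGFT} essentially verbatim, the only structural changes being that the role of the second function is now played by $f$ itself, and that the surviving integral is completely consumed by delta distributions, so that no classical convolution remains --- only a finite sum of reflected and translated copies of $f$.

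First I would substitute, into the defining relation of Definition~\ref{def:transl}, the explicit form of the inverse transform $\cF^{-1}_{F_1,F_2}=\cF_{\{-i_\mu,\dots,-i_1\},\{-i_m,\dots,-i_{\mu+1}\}}$ together with the integral expression for $\cF_{F_1,F_2}(f)(u)$. This presents $\tau^{F_1,F_2}_{y}f(x)$ as $(2\pi)^{-m}$ times a double integral over $u\in\mR^m$ and $z\in\mR^m$ with integrand
\[
\Big(\prod_{k=\mu}^1 e^{i_k x_k u_k}\Big)\Big(\prod_{k=1}^\mu e^{-i_k y_k u_k}\Big)\Big(\prod_{k=1}^\mu e^{-i_k z_k u_k}\Big)f(z)\Big(\prod_{k=\mu+1}^m e^{-i_k z_k u_k}\Big)\Big(\prod_{k=\mu+1}^m e^{-i_k y_k u_k}\Big)\Big(\prod_{k=m}^{\mu+1} e^{i_k x_k u_k}\Big),
\]
which is precisely the integrand occurring in the proof of Theorem~\ref{mustardconvGFT}, with $z$ now carrying $f$ and with the exponentials in $y$ originating from the translation rather than from a second function.

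Next, expanding every exponential by Euler's formula and regrouping exactly as in that proof --- pulling the real, hence central, $\cos$/$\sin$ factors to the front and collecting the powers of $i_k$, whose order among distinct $k$ must be preserved --- turns the integrand into a sum over $\vec{j}\in\{0,1\}^{3\times m}$, where $j_{1,k}$, $j_{2,k}$, $j_{3,k}$ indicate whether the $x$-, $y$-, $z$-exponential in coordinate $k$ supplies its cosine or its sine part. The integration over $u_k$ is then literally the one already performed there: by the product-to-sum identities~(\ref{trig}) and the splitting of $\delta(x)=\frac1{2\pi}\int_\mR e^{-ixu}\,du$ into real and imaginary parts, the summands with $j_{1,k}+j_{2,k}+j_{3,k}$ odd integrate to $0$, while for $j_{1,k}+j_{2,k}+j_{3,k}$ even,
\begin{align*}
&\int_{\mR} e^{i_k x_k u_k}_{j_{1,k}}\,e^{-i_k y_k u_k}_{j_{2,k}}\,e^{-i_k z_k u_k}_{j_{3,k}}\,du_k\\
&=\frac{\pi}{2}(-1)^{j_{2,k}+j_{3,k}}\sum_{\phi_k,\gamma_k\in\{0,1\}}(-1)^{(j_{(2\phi_k+\gamma_k+1,k)}+1)(\delta_{(j_{1,k}+j_{2,k}+j_{3,k})}-1)}\,\delta\big(x_k+(-1)^{\phi_k+1}y_k+(-1)^{\gamma_k+1}z_k\big),
\end{align*}
with the convention $j_{4,k}=0$. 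Performing the $z$-integration against $\prod_k\delta\big(x_k+(-1)^{\phi_k+1}y_k+(-1)^{\gamma_k+1}z_k\big)$ sets $z_k=(-1)^{\gamma_k}\big(x_k-(-1)^{\phi_k}y_k\big)$, turning $f(z)$ into $f^{\vec{\gamma}}\big(x-y^{\vec{\phi}}\big)=\tau_{y^{\vec{\phi}}}f^{\vec{\gamma}}(x)$ in the sense of Notation~\ref{not:phi,gamma}.

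Finally I would tidy up: absorb the signs $(-1)^{j_{2,k}+j_{3,k}}$ into the units via $(i_k)^{j_{2,k}}(-1)^{j_{2,k}}=(-i_k)^{j_{2,k}}$ (and likewise for $j_{3,k}$), gather the constant $(2\pi)^{-m}(\pi/2)^m=4^{-m}$, and recognise $\prod_{k=1}^m(-1)^{(j_{(2\phi_k+\gamma_k+1,k)}+1)(\delta_{(j_{1,k}+j_{2,k}+j_{3,k})}-1)}=c_{\vec{j},\vec{\phi},\vec{\gamma}}$; this gives the stated formula. The point demanding real care --- and essentially the only genuine work --- is the bookkeeping of the non-commuting $i_k$: one must check that throughout the calculation the translation exponentials $e^{-i_k y_k u_k}$ sit outside the $\cF_{F_1,F_2}(f)$-exponentials $e^{-i_k z_k u_k}$, and the inverse-transform exponentials $e^{i_k x_k u_k}$ outside both, on the left for $k\le\mu$ and on the right for $k>\mu$, so that after collecting units the products land exactly as $\prod_{k=\mu}^1(i_k)^{j_{1,k}}\prod_{k=1}^\mu(-i_k)^{j_{2,k}}\prod_{k=1}^\mu(-i_k)^{j_{3,k}}$ to the left of $\tau_{y^{\vec{\phi}}}f^{\vec{\gamma}}(x)$ and as the mirrored triple product to its right, matching the statement. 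No convergence difficulties beyond those already settled for the GFT occur, since $f\in\cS(\mR^m)\otimes\cC l_{0,m}$.
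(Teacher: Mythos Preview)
Your proposal is correct and is exactly the approach the paper intends: the paper's proof reads in full ``Similar to the proof of Theorem~\ref{mustardconvGFT}'', and you have carefully spelled out what that similarity entails, including the one structural difference (the $y$-exponentials now sit between the $x$- and $z$-exponentials on each side of $f$, and there is no residual $y$-integration, so the $\delta$-evaluations produce translated reflections of $f$ rather than a classical convolution).
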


\begin{proof}
Similar to the proof of Theorem \ref{mustardconvGFT}.
\end{proof}

Using this result, we can now give an explicit expression for the convolution product defined using the translation operator. 

\begin{corollary}
\label{transconvgft}
Let $J=\{0,1\}^{4\times m}$ with $j_{1,k}+j_{2,k}+j_{3,k}\in\{0,2\}$ and $j_{4,k}=0$ for all $k=1,\ldots,m$ be a set of multi-indices. The convolution $*^\tau_{F_1,F_2}$ defined by
\[
(f*^\tau_{F_1,F_2}g)(x):=\int_{\mR^m}{f(y)} \left[ \tau^{F_1,F_2}_{y} g(x) \right] dy
\]
with $\tau^{F_1,F_2}_{y}$ from Definition \ref{def:transl} can be expressed as a sum of classical convolutions using Notation \ref{not:phi,gamma} by
\begin{align*}
& (f*^\tau_{F_1,F_2}g)(x)\\
=&\frac1{4^m}\sum_{\vec{j}\in J}\sum_{\vec{\phi},\vec{\gamma} \in\{0,1\}^m}
c_{\vec{j},\vec{\phi},\vec{\gamma}} \\
& \quad \left( f^{\vec{\phi}}  \prod_{k=\mu}^1(i_k)^{j_{1,k}}\prod_{k=1}^\mu (-i_k)^{j_{2,k}}\prod_{k=1}^\mu(-i_k)^{j_{3,k}} \right)\\
& \quad \ast  \left( g^{\vec{\gamma}}  \prod_{k=\mu+1}^m(-i_k)^{j_{3,k}}\prod_{k=\mu+1}^m(-i_k)^{j_{2,k}}\prod_{k=m}^{\mu+1}(i_k)^{j_{1,k}} \right)(x).
\end{align*}
\end{corollary}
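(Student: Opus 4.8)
The plan is to substitute the explicit formula for $\tau^{F_1,F_2}_{y}g$ obtained in Theorem \ref{transGFT} directly into the defining integral
$(f*^\tau_{F_1,F_2}g)(x)=\int_{\mR^m}f(y)\,[\tau^{F_1,F_2}_{y}g(x)]\,dy$,
and then massage the result into a sum of classical convolutions. Applying Theorem \ref{transGFT} with $g$ in the role of the translated function, $\tau^{F_1,F_2}_{y}g(x)$ becomes the finite sum over $\vec{j}\in J$ and $\vec{\phi},\vec{\gamma}\in\{0,1\}^m$ of $c_{\vec{j},\vec{\phi},\vec{\gamma}}$ times a left product $L=\prod_{k=\mu}^{1}(i_k)^{j_{1,k}}\prod_{k=1}^\mu(-i_k)^{j_{2,k}}\prod_{k=1}^\mu(-i_k)^{j_{3,k}}$ of generalized roots, times $\tau_{y^{\vec{\phi}}}g^{\vec{\gamma}}(x)=g^{\vec{\gamma}}(x-y^{\vec{\phi}})$, times a right product $R=\prod_{k=\mu+1}^m(-i_k)^{j_{3,k}}\prod_{k=\mu+1}^m(-i_k)^{j_{2,k}}\prod_{k=m}^{\mu+1}(i_k)^{j_{1,k}}$. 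Since all the sums are finite, I can interchange them with the $y$-integration without any convergence concern, which leaves, for each fixed $\vec{j},\vec{\phi},\vec{\gamma}$, an integral of the form $\int_{\mR^m}f(y)\,L\,g^{\vec{\gamma}}(x-y^{\vec{\phi}})\,R\,dy$, where $L$ and $R$ are constant Clifford numbers (independent of $y$) and $c_{\vec{j},\vec{\phi},\vec{\gamma}}=\pm1$ is a real scalar pulled out front.

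The key manipulation is then the same coordinatewise sign change used at the end of the proof of Theorem \ref{mustardconvGFT}: in each such integral I replace $y$ by $y^{\vec{\phi}}$, i.e. substitute $y_k\mapsto(-1)^{\phi_k}y_k$. This substitution has Jacobian $1$ and maps $\mR^m$ onto itself, so the integral is unchanged in value; but, using Notation \ref{not:phi,gamma} and the fact that $\vec{\phi}$ applied twice is the identity, it turns $f(y)$ into $f^{\vec{\phi}}(y)$ and turns the shifted argument $x-y^{\vec{\phi}}$ into $x-y$. The integral thus becomes $\int_{\mR^m}f^{\vec{\phi}}(y)\,L\,g^{\vec{\gamma}}(x-y)\,R\,dy$, which, since $L$ is constant, is precisely the classical convolution $\bigl(f^{\vec{\phi}}L\bigr)*\bigl(g^{\vec{\gamma}}R\bigr)(x)$. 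Collecting the prefactor $\tfrac1{4^m}$, the signs $c_{\vec{j},\vec{\phi},\vec{\gamma}}$, and the explicit forms of $L$ and $R$ read off from Theorem \ref{transGFT} yields exactly the claimed formula.

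The only genuine subtlety, and the place I would be most careful, is the bookkeeping of the non-commuting factors: the generalized roots $i_k$ do not commute in general, so $L$ and $R$ must be kept on the correct side of $f^{\vec{\phi}}$ and $g^{\vec{\gamma}}$ throughout, and one must check that the only objects moved freely (the real scalar $c_{\vec{j},\vec{\phi},\vec{\gamma}}$ and the real Jacobian factor) are indeed central. Everything else is routine — which is precisely why the statement is phrased as a corollary of Theorem \ref{transGFT} rather than proved from scratch.
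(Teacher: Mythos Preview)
Your proposal is correct and is precisely the approach the paper intends: the corollary is stated without an explicit proof, as an immediate consequence of Theorem \ref{transGFT}, and your argument---substituting the expansion of $\tau^{F_1,F_2}_{y}g$ from that theorem, then performing the coordinatewise sign change $y_k\mapsto(-1)^{\phi_k}y_k$ exactly as at the end of the proof of Theorem \ref{mustardconvGFT}---is the natural way to fill in the details. Your remark about keeping $L$ and $R$ on the correct sides and only moving the central scalar $c_{\vec{j},\vec{\phi},\vec{\gamma}}$ is the right point of care.
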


Note that a similar expression as in Corollary \ref{transconvgft} can be obtained for
\[
\int_{\mR^m}{\left[ \tau^{F_1,F_2}_{y} f(x) \right]} \  g(y) \  dy.
\]


\subsection{Special case: the two-sided quaternionic Fourier transform}
\label{secQFT}

\subsubsection{Definition of the qFT}

The quaternion algebra $\mH$ is isomorphic with the Clifford algebra $\cC l_{0,2}$ under the identification ${\bf i} = e_{1}$, ${\bf j} =e_{2}$ and ${\bf k} = e_{1} e_{2}$.

Let $\mu, \nu \in \mH$ be quaternions with $\mu^2 = \nu^2 =-1$. Then, following \cite{HS}, we define the two-sided qFT as
\begin{align*}
&\cF^{\mu,\nu}(f) (y_1,y_2) \\
&:= (2 \pi)^{-1} \int_{\mR^2} e^{-\mu x_1 y_1} f(x_1,x_2) e^{-\nu x_2 y_2} dx_1 dx_2
\end{align*}
for functions $f \in L_1(\mR^2; \mH)$
where we have introduced a different normalization $(2 \pi)^{-1}$. The first definition of this two-sided transform, with $\mu={\bf j}$ and $\nu={\bf k}$, was introduced in the Ph.D. thesis \cite{EPHD}, see also \cite{E1}. In earlier work, a one sided version was given by Ernst et al. \cite{EBW} and by Delsuc \cite{Delsuc}, although these authors use an adaptation of the quaternion algebra. 
The applicability of the qFT to color image processing was first demonstrated in \cite{S} using a discrete version. 
 At that point, the switch was made to two general orthogonal axes $\mu$ and $\nu$ instead of ${\bf j}$ and ${\bf k}$. Indeed, for color image processing there is an arbitrary but preferred axis of the grey-line in the color space, so the transform kernel axes are generally aligned to or perpendicular to this axis. At the same time (\cite{SE}), a change was again made to one-sided transforms, mostly driven by the complexity of the resulting operational formula when using the two-sided qFT definition. Finally, the orthogonality condition on $\mu$ and $\nu$ was relaxed in \cite{HS}. For a recent review on the use of the qFT in image processing, we refer the reader to \cite{ES}.

 \subsubsection{Convolution for the qFT}

 In this section, we discuss the Mustard and generalized translation definitions of the convolution product for the qFT. First, we give the interaction of the qFT with the ordinary convolution. To that aim, we need the following definition.

\begin{definition}\label{d:c}
For an invertible multivector $b\in\cC l_{0,m}$ and an arbitrary multivector $a\in\cC l_{0,m}$ we define the commutative and anticommutative part of $a$ with respect to $b$ by
\begin{align*}
 a_{c^0(b)}=&\frac12(a+b^{-1}ab),\\
a_{c^1(b)}=&\frac12(a-b^{-1}ab).
\end{align*}
\end{definition}
Using this definition, the quaternionic Fourier transform of the ordinary convolution $(f*g)(x)$ is given by
\begin{align*}
&\cF^{\mu,\nu}(f*g)(u)\\
&= 2 \pi  \sum_{j,k=0}^1(\cF^{\mu,(-1)^k\nu}(f)(u))_{c^{j}(\mu)}
\cF^{(-1)^j\mu,\nu}(g_{c^{k}(\nu)})(u).
\end{align*}
This formula is a special case of the convolution theorem for a general GFT, obtained in \cite{BU5}. In the discrete case, a similar formula was earlier obtained in \cite{ES}. Note that, in particular,
\begin{equation}
\label{convQ}
\cF^{\mu,\nu} \left(f * g \right) \neq 2 \pi \cF^{\mu,\nu}(f) \cF^{\mu,\nu}(g).
\end{equation}

The Mustard convolution, as obtained explicitly in Theorem \ref{mustardconvGFT} for an arbitrary GFT, reduces in the case of the two-sided qFT to
\begin{align*}
 (f \ast_{q} g)(x) &= \frac{1}{16} \sum_{\stackrel{\vec{j}\in\{0,1\}^{4\times 2},}{\forall k=1,2:j_{1,k}+j_{2,k}+j_{3,k}\in\{0,2\},j_{4,k}=0}} \\
& \quad \sum_{\vec{\phi}, \vec{\gamma} \in\{0,1\}^2}  c_{\vec{j},\vec{\phi},\vec{\gamma}}  \left(  (\mu)^{j_{1,1}}    (-\mu)^{j_{2,1}}  f^{\vec{\phi}}  (-\nu)^{j_{2,2}} \right) \\
& \quad \ast  \left( (-\mu)^{j_{3,1}}  g^{\vec{\gamma}}   (- \nu)^{j_{3,2}} (\nu)^{j_{1,2}} \right) (x).
\end{align*}
It can be checked that this expression coincides with the more symmetric formula
\begin{align*}
 (f \ast_{q} g)(x)& = \frac{1}{4} \sum_{j_1,j_2=0}^1 \sum_{k_1,k_2=0}^1 c_{j_1,j_2,k_1,k_2} \\
  & \quad \times \bigl( (\mu^{j_1} f^{k_1} \nu^{j_2}) \ast (\mu^{j_1} g^{k_2} \nu^{j_2}) \bigr)(x)
\end{align*}
where 
\[
c_{j_{1}, j_{2}, k_{1},k_{2}}:= (-1)^{(k_2+1) \delta_{j_1,1}}  \ (-1)^{(k_1+1) \delta_{j_2,1}}
\]
and
\begin{align*}
f^{k_1}(x_1,x_2) &= f(x_1,(-1)^{k_1} x_2), \qquad k_1 \in \{0,1\}\\ 
g^{k_2}(x_1,x_2) &= g((-1)^{k_2}x_1,x_2), \qquad k_2\in \{0,1\}.
\end{align*}
The advantage of the Mustard definition is that in this case, contrary to formula (\ref{convQ}), the following holds:
\[
\cF^{\mu,\nu} \left(f *_{q} g \right) = 2 \pi \cF^{\mu,\nu}(f) \cF^{\mu,\nu}(g).
\]

The situation is quite different for the convolution defined using the generalized translation. First of all, it can easily be proven (see e.g. \cite{DBQ}) for any function $f$ that the generalized translation in the case of the qFT coincides with geometric translation:
\[
  \tau^{\mu,\nu}_{y}f(x)=  \tau_{y}f(x).
\]
As a consequence, the associated convolution also coincides with the classical convolution, i.e.
\[
(f*^\tau_{ \mu ,  \nu }g)(x) = (f \ast g)(x).
\]
This can also be proven starting from the result in Corollary \ref{transconvgft}.

\section{Conclusions}

In this paper, we have studied two conceptual ways of defining convolution products, namely using the method of Mustard and using the generalized translation operator. We applied these ideas to two important families of hypercomplex Fourier transforms. A summary of our results can be found in Table \ref{tableconv}.

We expect that in particular the Mustard convolution will find many applications in color image processing. Currently, the design of filters for such images using hypercomplex methods is hindered by the lack of a proper convolution theorem. Our results now enable the development of a complete  theory of linear system design in the quaternionic case (as well as in higher dimensions, which may be interesting for other applications). A Fourier domain analysis, using the Mustard convolution, of the color edge filter constructed in \cite{S_CE} will serve as the guiding example to achieve this.

\section*{Acknowledgements}
The authors would like to thank Todd Ell and Stephen Sangwine for email communication about the qFT, as well as Eckhard Hitzer for communication about the GFT and proofreading Section 4.

\newpage 

\vspace{2mm}~

\newpage

\begin{table}[h]
\caption{Results on convolutions for the two approaches \textbf{A} and \textbf{B}}
\begin{center}
\begin{tabular}{c||c|c|}
& Approach \textbf{A}& Approach \textbf{B}\\
& eigenfunction approach & generalized roots of $-1$ approach\\ 
&&\\
\hline
&&\\
Definition& $\cF_K$&$ \cF_{F_1,F_2}$\\
& formulas (\ref{genkernel}), (\ref{gentransform})&Definition \ref{def:gft}\\
Eigenfunctions&$\psi_{j,k,\ell}$& $\psi_{j_1,j_2, \ldots, j_m}$\\
Eigenvalues & complex numbers & elements from $\cC l_{0,m}$\\
& Theorem \ref{eigenvalues}& Theorem \ref{eigenvalsGFT}\\
&&\\
\hline
&&\\
Mustard convolution&Definition \ref{conv2}& Definition \ref{def:conv_must}\\
Expression for convolution& Proposition \ref{explicitconv2}& Theorem \ref{mustardconvGFT}\\
&&\\
\hline
&&\\
Generalized translation&$\tau^{K}_{y}$&$\tau^{F_1,F_2}_{y}$\\
 & Definition \ref{defTrans} & Definition \ref{def:transl} \\
action on: & radial functions & arbitrary functions\\
& Theorem \ref{transradial} & Theorem \ref{transGFT}\\
Translation convolution&Definition \ref{conv1} &Corollary \ref{transconvgft}\\
Expression for convolution & Proposition \ref{explicitconv1} & Corollary \ref{transconvgft}
\end{tabular}
\end{center}
\label{tableconv}
\end{table}

\end{document}